\newcommand{\Dom}{D}
\newcommand{\eps}{\varepsilon}
\DeclareMathOperator{\re}{Re}
\DeclareMathOperator{\diag}{diag}
\newcommand*{\C}{{\mathbb{C}}}     
\newcommand*{\R}{{\mathbb{R}}}     
\newcommand*{\Z}{{\mathbb{Z}}}     
\newcommand*{\N}{{\mathbb{N}}}
\newcommand*{\Lin}{{\mathcal{L}}}   
\newcommand{\ran}{{\mathcal{R}}}   
\renewcommand{\ker}{{\mathcal{N}}}
\newcommand*{\abs}[1]{\lvert#1\rvert}
\newcommand*{\norm}[1]{\lVert#1\rVert}
\newcommand*{\set}[1]{\{#1\}}
\newcommand*{\setm}[2]{\{\,#1\mid#2\,\}}   
\newcommand*{\iprod}[2]{\langle#1,#2\rangle}    
\newcommand*{\ldelim}[2]{\csname#1l\endcsname#2} 
\newcommand*{\rdelim}[2]{\csname#1r\endcsname#2} 
\newcommand*{\mdelim}[2]{\csname#1m\endcsname#2} 
\newcommand*{\Set}[2][default]{\ifthenelse{\equal{#1}{default}}{\left\{#2\right\}}{\ldelim{#1}{\{}#2\rdelim{#1}{\}}}} 
\newcommand{\Setm}[3][big]{\ldelim{#1}{\{}\,#2\mdelim{#1}{|}#3\,\rdelim{#1}{\}}} 
\newcommand*{\Lp}[1][p]{L^{#1}}
\newcommand*{\lp}[1][p]{\ell^{#1}} 
  \newcommand{\pmat}[1]{\begin{bmatrix}#1\end{bmatrix}}
\newcommand{\pmatsmall}[1]{\begin{bsmallmatrix}#1\end{bsmallmatrix}}
\newcommand{\from}{\leftarrow}
\newcommand*{\List}[2][1]{\set{#1,\ldots,#2}}
\newcommand{\eq}[1]{\begin{align*}#1\end{align*}}
\newcommand{\eqn}[1]{\begin{align}#1\end{align}}
\newcommand{\ieq}[1]{$#1$}
\newcommand{\gs}{\sigma}
\newcommand{\ga}{\alpha}
\newcommand{\gb}{\beta}
\renewcommand{\gg}{\gamma}
\newcommand{\gd}{\delta}
\newcommand{\gl}{\lambda}
\newcommand{\gw}{\omega}
\newcommand{\inv}{^{-1}}
\newcommand*{\pinv}{^{\dagger}} 
\newcommand*{\ddb}[2][1]{\ifthenelse{\equal{#1}{1}}{\frac{d}{d#2}}{\frac{d^{#1}}{d#2^{#1}}}}
\newcommand*{\pd}[3][1]{\ifthenelse{\equal{#1}{1}}{\frac{\partial{#2}}{\partial{#3}}}{\frac{\partial^{#1}{#2}}{\partial#3^{#1}}}}
\newcommand*{\keyterm}[1]{\emph{#1}}
\newcommand{\citel}[2]{\cite[#2]{#1}}
\renewcommand{\pmat}[1]{\begin{bmatrix}#1\end{bmatrix}}
\renewcommand{\pmatsmall}[1]{\begin{bsmallmatrix}#1\end{bsmallmatrix}}
\newcommand{\qext}{q_{ext}}
\newcommand{\CL}{C_\Lambda}
\newcommand{\CeL}{C_{e\Lambda}}
\newcommand{\Pez}{P_{e0}}
\newcommand{\Ac}{A_c}
\newcommand{\Bc}{B_c}
\newcommand{\Bck}[1][k]{B_{c#1}}
\newcommand{\Cc}{C_c}
\newcommand{\CcL}{C_{c\Lambda}}
\newcommand{\Cck}[1][k]{C_{c#1}}
\newcommand{\Dc}{D_c}
\newcommand{\Dcone}{D_{c1}}
\newcommand{\Dctwo}{D_{c2}}
\newcommand{\Dconeadd}{D_{c1}}
\newcommand{\Dtot}{D_{tot}}
\newcommand{\dc}{d_c}
\newcommand{\uc}{u_c}
\newcommand{\yc}{y_c}
\newcommand{\gwlim}{R}
\newcommand{\G}{G}
\newcommand{\Gmu}{G^\mu}
\newcommand{\XB}{X_B}
\newcommand{\XBBd}{X_{B,B_d}}
\newcommand{\ZG}{Z_{\Bc}}
\newcommand{\SIndset}{\mathcal{I}}
\DeclareMathOperator{\blkdiag}{blockdiag}
\newcommand{\PARsysopspert}{(\tilde{A},\tilde{B},\tilde{B}_d,\tilde{C},\tilde{D})}
\newcommand{\PARsys}{(A,B,C,D)}
\newcommand{\PARsysS}{(A^S,B^S,C^S,D^S)}
\newcommand{\PARcontr}{(\Ac,\Bc,\Cc,\Dc)}
\newcommand{\PARcontrone}{(\Ac,\Bc,\Cc,\Dcone)}
\newtheorem{theorem}{Theorem}[section]
\newtheorem{lemma}[theorem]{Lemma}
\newtheorem{proposition}[theorem]{Proposition}
\newtheorem{corollary}[theorem]{Corollary}
\theoremstyle{definition}
\newtheorem{definition}[theorem]{Definition}
\newtheorem{assumption}[theorem]{Assumption}
\newtheorem{remark}[theorem]{Remark}
\numberwithin{equation}{section}
\newenvironment{RORP}{\textbf{The Robust Output Regulation Problem.}\it}{}
\newcommand{\distref}{w_{\mbox{\scriptsize\textit{ext}}}}
\newcommand{\distrefk}{w^k_{\mbox{\scriptsize\textit{ext}}}}
\newcommand{\yrefk}{y^k_{\mbox{\scriptsize\textit{ref}}}}
\newcommand{\wdistk}{w^k_{\mbox{\scriptsize\textit{dist}}}}
\newcommand{\wdisthat}{\hat{w}_{\mbox{\scriptsize\textit{dist}}}}
\newcommand{\Pitrans}{\Pi_{\mbox{\scriptsize\textit{ext}}}}
\newcommand{\Pitransk}[1][k]{\Pi_{\mbox{\scriptsize\textit{ext}}}(#1)}
\newcommand{\Pitranskone}[1][k]{\Pi_{\mbox{\scriptsize\textit{ext}}}^1(#1)}
\newcommand{\Pitransktwo}[1][k]{\Pi_{\mbox{\scriptsize\textit{ext}}}^2(#1)}
\newcommand{\Pitranspert}{\tilde{\Pi}_{\mbox{\scriptsize\textit{ext}}}}
\newcommand{\Pitranskpert}[1][k]{\tilde{\Pi}_{\mbox{\scriptsize\textit{ext}}}(#1)}
\newcommand{\Piseqnorm}{M_{\mbox{\scriptsize\textit{ext}}}}
\newcommand{\Pipartk}{u_k}
\newcommand{\Pipartkalt}{\tilde{u}_k}
\newcommand{\yref}{y_{\mbox{\scriptsize\textit{ref}}}}
\newcommand{\wdist}{w_{\mbox{\scriptsize\textit{dist}}}}
\newcommand{\Mlog}{M_{\textup{log}}}
\begin{document}

\title{Stability and Robust Regulation of Passive Linear Systems}

\thispagestyle{plain}

\author{Lassi Paunonen}
\address{Mathematics and Statistics, Faculty of Information Technology and Communication Sciences, Tampere University, PO.\ Box 692, 33101 Tampere, Finland.}
\thanks{
  The manuscript was completed while the author was visiting Professor Charles J.K.\ Batty at University of Oxford from January to June in 2017. 
The research is funded by the Academy of Finland grant numbers 298182 and 310489.} 
\email{lassi.paunonen@tuni.fi}

\begin{abstract}
We study the stability of coupled impedance passive regular linear systems under power-preserving interconnections. We present new conditions for strong, exponential, and non-uniform stability of the closed-loop system. We apply the stability results to the construction of passive error feedback controllers for robust output tracking and disturbance rejection for strongly stabilizable passive systems. In the case of nonsmooth reference and disturbance signals we present conditions for non-uniform rational and logarithmic rates of convergence of the output. The results are illustrated with examples on designing controllers for linear wave and heat equations, and on studying the stability of a system of coupled partial differential equations.
\end{abstract}

\subjclass[2010]{%
%%Primary (Secondary)
93C05, %Linear systems
47D06, % one-parameter semigroups linear evolution equations
93D20, % Systems theory -> Asymptotic stability
93B52 %Feedback control
(47A10, % Operator theory: spectrum, resolvent
35B35, %PDEs -> qualitative properties of solutions -> stability
93D15)% Systems theory -> Stabilization of systems by feedback 
}
\keywords{Linear system, strongly continuous semigroup, coupled systems, strong stability, polynomial stability, impedance passive, feedback, robust output regulation, controller design.} 

\maketitle

\section{Introduction}
\label{sec:intro}

In this paper we study the stability properties and control of regular linear systems~\cite{Wei94} of the form\footnote{Here $\CL$ and $\CcL$ denote the $\Lambda$-extensions of $C$ and $\Cc$, respectively. See Section~\ref{sec:CLsysdefs} for details.}
\begin{subequations}
  \label{eq:plantintro}
  \eqn{
  \dot{x}(t)&=Ax(t)+Bu(t) ,
  \qquad x(0)=x_0\in X,\\
  y(t)&=\CL x(t)+Du(t)
  } 
\end{subequations}
on a Hilbert space $X$, where $u(t)$ is the input of the system and $y(t)$ is the output.  
Our main interest is in systems that are  \keyterm{impedance passive}~\cite{BroLoz07book,Sta02,TucWei14} (or \keyterm{passive} for short) in the sense that their solutions satisfy
\eq{
  \ddb{t}\norm{x(t)}^2 \leq 2\re \iprod{u(t)}{y(t)}, \qquad t> 0.
}
Passive systems are encountered especially in the study of mechanical or electrical systems modeled with partial differential equations.  In particular,~\eqref{eq:plantintro} is impedance passive if $A$ generates a contraction semigroup, $B$ and $C$ are bounded operators, $C=B^\ast$, and $\re D\geq 0$.

The paper consists of two main parts.
In the first part we focus on the stability of the coupled system consisting of~\eqref{eq:plantintro} and another passive regular linear system
\begin{subequations}
  \label{eq:sys2intro}
  \eqn{
  \dot{z}(t)&=\Ac z(t)+\Bc u_c(t),  \qquad z(0)=z_0\in Z,\\
  y_c(t)&=\CcL z(t)+\Dc u_c(t)
  } 
\end{subequations}
with $\Dc^\ast=\Dc$
under a \keyterm{power-preserving interconnection} where
\eq{
  u(t)=y_c(t), \qquad u_c(t)=-y(t).
}
We study the stability of the resulting closed-loop system
\eqn{
  \label{eq:CLsysintro}
  \dot{x}_e(t)&=A_ex_e(t), \qquad x_e(0)=x_{e0}\in X_e
}
on the Hilbert space $X_e=X\times Z$.
The notation $(\Ac,\Bc,\Cc,\Dc)$ and our results on the closed-loop stability 
are motivated by the second part of the paper where we study 
robust output tracking and disturbance rejection for the system~\eqref{eq:plantintro}. 
In this situation~\eqref{eq:sys2intro} is an unstable dynamic feedback controller.
However, our results are also applicable
when the roles of the systems are reversed, i.e., when~\eqref{eq:sys2intro} is a system to be controlled and~\eqref{eq:plantintro} is the controller, 
and they can also be used to 
study the stability of systems of partial differential equations coupled on the boundary or inside the domain.
Our main interest is in the situation where $\Ac$ has a countable number of spectral points on the imaginary axis.

We study~\eqref{eq:CLsysintro} 
in terms of the stability properties of the strongly continuous semigroup $T_e(t)$ generated by $A_e: \Dom(A_e)\subset X_e\to X_e$.
As our main results we introduce conditions under which the semigroup $T_e(t)$ is exponentially stable, strongly stable, or \keyterm{non-uniformly stable}~\cite{BatDuy08,RozSei19}.
Among these, exponential stability is the strongest form of stability.
However, in certain control applications exponential stability is unachievable, and many 
partial differential equations and coupled systems are known to lack exponential decay of energy.  
These situations arise especially in wave equations with partial damping and in 
coupled hyperbolic-parabolic systems~\cite{ZhaZua04, AvaLas16}.
Recently many such coupled systems 
have been shown to be \keyterm{polynomially stable}~\cite{LiuRao05,BatDuy08,BorTom10}, which means that the classical solutions of the system decay at rational rates, i.e., for some constants $M_e,\ga,t_0>0$
\eq{
  \norm{T_e(t)x_{e0}} \leq \frac{M_e}{t^{1/\ga}}\norm{A_ex_{e0}}, \qquad x_{e0}\in \Dom(A_e), \ t\geq t_0 .
}
In this paper we introduce new results 
for studying polynomial and the more general non-uniform stability
for coupled passive abstract linear systems~\eqref{eq:plantintro} and~\eqref{eq:sys2intro}.

Strong and exponential closed-loop stabilities of infinite-dimensional systems have been studied in the literature for passive one-dimensional boundary control systems~\cite{VilZwa09,RamLeG14}, coupled systems with collocated inputs and outputs~\cite{FenGuo15}, and passive systems coupled with finite-dimensional systems~\cite{ZhaWei17}.
Polynomial stability of coupled systems has been studied extensively in the context of coupled linear partial differential equations~\cite{AmmNic09,FatLea12,AvaLas16,AmmDim16}, and for abstract hyperbolic-parabolic systems~\cite{BenAmm16}.

In the second part of the paper we study the \keyterm{robust output regulation problem} where the aim is to design a controller in such a way that the output $y(t)$ of the system~\eqref{eq:plantintro} converges to a given reference signal $\yref(t)$ asymptotically in the sense that
\eq{
  \norm{y(t)-\yref(t)}  \to 0, \qquad t\to \infty
}
despite possible external disturbance signals $\wdist(t)$.
In addition, the controller is required to be \keyterm{robust} in the sense that it should achieve output tracking even if the parameters $\PARsys$ experience small changes or contain uncertainties.
This control problem
has been studied actively in the literature for various classes of infinite-dimensional linear systems~\cite{YamHar88,LogTow97,HamPoh00,RebWei03,Imm07a,HamPoh10,PauPoh10,WanJi14b} including regular linear systems~\cite{WeiHaf99, BouHad09,PauPoh14a,XuSal14,Pau16a,Pau17b} and passive systems~\cite{RebWei03}. 

The robust output regulation problem can be solved with a dynamical error feedback controller
of the form
\begin{subequations}
  \label{eq:contrintro}
  \eqn{
  \dot{z}(t)&=\Ac z(t)+\Bc(\yref(t)-y(t)), \qquad z(0)=z_0\in Z,\\
  u(t)&=\CcL z(t) + \Dc (\yref(t)-y(t)).
  }
\end{subequations} 
One of the fundamental results of the theory, the \keyterm{internal model principle}~\cite{FraWon75a,Dav76,PauPoh10,PauPoh14a}, 
implies that robust output tracking 
can be achieved by
including a suitable 
number of copies
of the frequencies $\set{\gw_k}_{k\in\SIndset}$
of $\yref(t)$ and $\wdist(t)$
into the dynamics of the controller and using the remaining parameters of~\eqref{eq:contrintro} to stabilize the closed-loop system.
While the inclusion of the \keyterm{internal model} is both necessary and sufficient for robustness, 
the resulting closed-loop can be stabilized in various ways.
Under fairly general assumptions the closed-loop stability can be achieved with observer-based design methods~\cite{HamPoh10,Pau16a} leading to infinite-dimensional controllers. 
If the system~\eqref{eq:plantintro} 
can be stabilized exponentially with output feedback
and if $\yref(t)$ and $\wdist(t)$ 
contain a finite number of frequencies,
then 
 $\Ac$ can be chosen to
be \keyterm{minimal} in the sense that it contains only
 the internal model, and the closed-loop system can be stabilized with suitable choices of $\Bc$ and $\Cc$~\cite{LogTow97,HamPoh00,RebWei03}. 
 It was shown in~\citel{RebWei03}{Thm. 1.2} that if~\eqref{eq:plantintro} is passive and exponentially stabilizable, then robust output regulation can be achieved in a natural way using a minimal passive controller~\eqref{eq:contrintro}.

In this paper we extend the passive controller design presented in~\cite{RebWei03}.
We present a robust passive controller for systems~\eqref{eq:plantintro}
that are not exponentially stablizable, but only strongly stabilizable. Such systems are encountered, for example, in control of wave equations, as illustrated in Section~\ref{sec:examples}. Moreover, our design methods allow considering nonsmooth periodic reference and disturbance signals with infinite numbers of frequencies.
In earlier references, the robust output regulation of nonsmooth signals has only been achieved using an observer in the controller~\cite{HamPoh10,Pau17b}. We solve this problem with two new robust controllers having the property that $\Ac$ contains only the internal model of the reference and disturbance signals. These controllers achieve either exponential, polynomial, or non-uniform closed-loop stability depending on the properties of the system~\eqref{eq:plantintro} and the choices of the controller's parameters.
In the case of non-uniform closed-loop stability we present non-uniform rates of convergence for the output $y(t)$ for sufficiently smooth $\yref(\cdot)$ and $\wdist(\cdot)$.

One of the passive controllers presented in this paper is based on a transport equation with boundary control and observation, and under suitable assumptions on the system~\eqref{eq:plantintro} (in general requiring $D\neq 0$) the controller achieves robust output regulation of all $\tau$-periodic reference and disturbance signals with exponential convergence rate of the output.
This structure is related to the controllers used in repetitive control~\cite{HarYam88,WeiHaf99} and in~\cite{Imm07a}.

The paper is organised as follows. In Section~\ref{sec:CLsysdefs} we state the main standing assumptions. The results on stability of the closed-loop system are presented in Section~\ref{sec:CLstab}. In Section~\ref{sec:RORP} we formulate the robust output regulation problem, and the results on construction of robust controllers are presented in Section~\ref{sec:ContrConstructions}. In Section~\ref{sec:examples} we illustrate the controller construction for concrete partial differential equations, including two one-dimensional wave equations and a two-dimensional heat equation. Appendix~\ref{sec:OpEstimates} collects helpful lemmata that are used throughout the paper.

\section{Notation and Definitions}
\label{sec:CLsysdefs}

If $X$ and $Y$ are Banach spaces and $A:X\rightarrow Y$ is a linear operator, we denote by $\Dom(A)$, $\ker(A)$ and $\ran(A)$ the domain, kernel and range of $A$, respectively. The space of bounded linear operators from $X$ to $Y$ is denoted by $\Lin(X,Y)$. If \mbox{$A:X\rightarrow X$,} then $\gs(A)$
and $\rho(A)$ denote the spectrum
and the \mbox{resolvent} set of $A$, respectively. For $\gl\in\rho(A)$ the resolvent operator is  \mbox{$R(\gl,A)=(\gl -A)^{-1}$}.  The inner product on a Hilbert space is denoted by $\iprod{\cdot}{\cdot}$.
For $T\in \Lin(X)$ on a Hilbert space $X$ we define $\re T = \frac{1}{2}(T+T^\ast)$.
The Moore-Penrose pseudoinverse of $T\in \Lin(X,Y)$ is denoted by $T\pinv$.
For two functions $f:I\subset \R\to X$ and $g:\R_+\to \R_+$ we write $\norm{f(t)}=O(g(\abs{t}))$ if there exist $M_g,T_g>0$ such that $\norm{f(t)}\leq M_g g(\abs{t})$ whenever $\abs{t}\geq T_g$.
We denote $f(t)\lesssim g(t)$ and $f_k\lesssim g_k$ if there exist $M_1,M_2>0$ such that $f(t)\leq M_1 g(t)$ and $f_k\leq M_2g_k$ for all values of the parameters $t$ and $k$.

  In Sections~\ref{sec:RORP} and~\ref{sec:ContrConstructions} we also consider the system~\eqref{eq:plantintro} 
on a Hilbert space $X$ 
  with an additional disturbance signal input $\wdist(t)$, i.e.,
\begin{subequations}
  \label{eq:plantfull}
  \eqn{
  \dot{x}(t)&=Ax(t)+Bu(t) + B_d\wdist(t), \qquad x(0)=x_0\in X,\\
  y(t)&=\CL x(t)+Du(t).
  } 
\end{subequations}
Throughout the paper the operators $B\in \Lin(U,X_{-1})$, $B_d\in \Lin(U_d,X_{-1})$ and $C\in \Lin(X_1,Y)$ are admissible~\citel{TucWei09book}{Sec. 4} with respect to the semigroup $T(t)$ generated by $A: \Dom(A)\subset X\to X$.
Here $U$, $U_d$, and $Y$ are Hilbert spaces,
the space $X_1=\Dom(A)$ is equipped with the graph norm of $A$, and $X_{-1}$ is the completion of $X$ with respect to the norm $\norm{x}_{-1}=\norm{R(\gl_0,A)x}$ where $\gl_0\in \rho(A)$ is arbitrary and fixed.
We assume that the system $(A,[B,B_d],\CL,D)$ in~\eqref{eq:plantfull} with input $(u(t),\wdist(t))\in U\times U_d$ and output $y(t)\in Y$ is a regular linear system~\citel{Wei94}{Sec.~5}.
We denote $\XB=\Dom(A)+\ran(R(\gl_0,A)B)$ and $\XBBd=\Dom(A)+\ran(R(\gl_0,A)[B,B_d])$.
The $\Lambda$-extension of $C$ is
\ieq{\CL x=\lim_{\gl\to\infty} \gl CR(\gl,A)x},
where $\Dom(\CL)$ consists of those $x\in X$ for which the limit exists.
The regularity of~\eqref{eq:plantfull} implies that $\ran(R(\gl,A)B)\subset \Dom(\CL)$ and $\ran(R(\gl,A)B_d)\subset \Dom(\CL)$ for all $\gl\in\rho(A)$ and that the transfer functions $P(\cdot): \hat{u}\mapsto \hat{y}$ and $P_d(\cdot): \wdisthat\mapsto \hat{y}$ have the formulas
\eq{
P(\gl) = \CL R(\gl,A)B+D, \qquad P_d(\gl)=\CL R(\gl,A)B_d.
}
  Throughout the paper we assume that $Y=U$ and that $\PARsys$ is impedance passive~\cite{BroLoz07book,Sta02,TucWei14},
which is equivalent to the property that $\re \iprod{Ax+Bu}{x}\leq \re \iprod{\CL x+Du}{u}$ for all $x\in X$ and $u\in U$ satisfying $Ax+Bu\in X$~\cite[Thm. 4.2]{Sta02}. Under this assumption the semigroup $T(t)$ generated by $A$ is contractive,  
  $\re D\geq 0$, and $\re P(\gl)\geq 0$ for all $\gl\in \C_+$ (such transfer functions are called \keyterm{positive})

We frequently use the following operator identity, see e.g.~\citel{WeiXu05}{Proof of Thm. 1.2}.
  For completeness, we give a proof of the lemma in  Appendix~\ref{sec:OpEstimates}.
 \begin{lemma}
   \label{lem:Woodbury}
   Let $\PARsys$ 
   be a regular linear system and let $Q\in  \Lin(Y,U)$ be invertible. If $\gl\in\rho(A)$ and if
    $Q\inv + \CL R(\gl,A)B$ is boundedly invertible, then $\gl\in\rho(A-BQ\CL)$ and 
   \eq{
     R(\gl,A-BQ\CL) = R(\gl,A)-R(\gl,A)B (Q\inv + \CL R(\gl,A) B)\inv \CL R(\gl,A),
   }
   where $\Dom(A-BQ\CL)=\setm{x\in \Dom(\CL)}{(A-BQ\CL)x\in X}$.
 \end{lemma}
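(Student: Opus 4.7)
The plan is to verify the formula by direct computation, with the only real subtlety being careful bookkeeping of which operators act on which space ($X$, $X_1$, $X_{-1}$, or $\Dom(\CL)$). Throughout, abbreviate $R_0 = R(\gl,A)$, $K = \CL R_0 B \in \Lin(U,Y)$, $M = Q\inv + K$ (boundedly invertible by assumption), and let $R_1 \in \Lin(X)$ denote the operator on the right-hand side of the claimed formula. The identity of $R_1$ as $R(\gl, A - BQ\CL)$ will follow once $R_1$ is shown to be a two-sided inverse of $\gl - (A - BQ\CL)$, with range landing in the prescribed domain.

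First I would fix $x \in X$ and check that $R_1 x$ lies in $\Dom(A - BQ\CL)$. Since $R_0 x \in \Dom(A) \subset \Dom(\CL)$ and $\ran(R_0 B) \subset \Dom(\CL)$ by regularity, we have $R_1 x \in \Dom(\CL)$ and may compute
\eq{
  \CL R_1 x = \CL R_0 x - K M\inv \CL R_0 x = (I - KM\inv)\CL R_0 x = Q\inv M\inv \CL R_0 x,
}
using $I - KM\inv = (M - K)M\inv = Q\inv M\inv$. Hence $Q\CL R_1 x = M\inv \CL R_0 x$. Working in $X_{-1}$ and using $(\gl - A)R_0 B = B$,
\eq{
  (\gl - A)R_1 x + BQ\CL R_1 x = x - BM\inv \CL R_0 x + BM\inv \CL R_0 x = x,
}
so $(A - BQ\CL)R_1 x = \gl R_1 x - x \in X$. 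This simultaneously verifies $R_1 x \in \Dom(A - BQ\CL)$ and the right-inverse identity $(\gl - (A - BQ\CL))R_1 x = x$.

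For the converse direction, take $y \in \Dom(A - BQ\CL)$ and set $z = (\gl - (A - BQ\CL))y \in X$. Applying $R_0$ to $z = (\gl - A)y + BQ\CL y$ gives $R_0 z = y + R_0 B Q \CL y$, and applying $\CL$ then yields
\eq{
  \CL R_0 z = \CL y + KQ\CL y = (I + KQ)\CL y = MQ\CL y,
}
so $M\inv \CL R_0 z = Q\CL y$. Substituting back,
\eq{
  R_1 z = R_0 z - R_0 B M\inv \CL R_0 z = (y + R_0 B Q \CL y) - R_0 B Q \CL y = y,
}
which is the left-inverse identity and completes the proof that $\gl \in \rho(A - BQ\CL)$ with the claimed resolvent formula.

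The main obstacle, and essentially the only non-mechanical point, is keeping track of where each intermediate expression lives: in particular that $B$ maps into $X_{-1}$ while $R_0 B$ maps into $\Dom(\CL) \subset X$, that the identity $(\gl - A)R_0 B = B$ is only valid in $X_{-1}$, and that the final cancellation producing $x$ (respectively, $y$) yields an element of $X$ which then certifies membership in $\Dom(A - BQ\CL)$. Beyond this, the computation is an algebraic manipulation of the Schur-complement identity $I - KM\inv = Q\inv M\inv$.
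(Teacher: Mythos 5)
Your proof is correct, and it is exactly the "direct computation" the paper alludes to (the paper gives no written proof, only the remark that the formula can be verified directly): you check that the right-hand side is a two-sided inverse of $\gl-(A-BQ\CL)$ with range in the prescribed domain, using the Schur-type identities $I-KM\inv=Q\inv M\inv$ and $I+KQ=MQ$, and you correctly track that $\ran(R(\gl,A)B)\subset\Dom(\CL)$ by regularity and that $(\gl-A)R(\gl,A)B=B$ holds in $X_{-1}$. No gaps.
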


The system~\eqref{eq:sys2intro} is assumed to be another
impedance passive regular linear system on a Hilbert space $Z$ with $\Dc^\ast =\Dc$. 
The scale spaces $Z_1$ and $Z_{-1}$ are defined analogously as $X_1$ and $X_{-1}$. We define $\ZG= \Dom(\Ac)+\ran(R(\gl_0,\Ac)\Bc)$ for some $\gl_0\in\rho(\Ac)$ and denote the $\Lambda$-extension of $\Cc$ by $\CcL$.
The passivity implies that
$\re \iprod{\Ac z+\Bc y}{z}\leq \re \iprod{\Cc z+\Dc y}{y}$ for all $z\in Z$ and $y\in Y$ satisfying $\Ac z+\Bc y\in Z$, and we have $\Dc\geq 0$.
We denote the transfer function of $(\Ac ,\Bc ,\Cc ,\Dc)$ with 
\eq{
  \G(\gl) = \CcL R(\gl,\Ac)\Bc + \Dc, \qquad \gl\in\rho(\Ac).
}
Our assumption $\Dc\geq 0$ simplies the analysis of the admissibility of output feedbacks of the two passive systems~\eqref{eq:plantintro} and~\eqref{eq:sys2intro}. 
  However, many of the results also hold in the situation where $\re \Dc\geq 0$ as long as the appropriate feedback operators remain admissible, which is the case, e.g., 
  if $\norm{\Dc-\Dc^\ast}$  
  is sufficently small.

\section{Stability of Coupled Passive Systems}
\label{sec:CLstab}

In this section we present our main results on the stability of the closed-loop system associated to the power-preserving interconnection of~\eqref{eq:plantintro} and~\eqref{eq:sys2intro}.  
Lemma~\ref{lem:CLreg} in Section~\ref{sec:RORP} shows that the system operator $A_e$ of the closed-loop system 
\eq{
  \dot{x}_e (t) &= A_ex_e(t), \qquad x_e(0)=x_{e0}=(x_0,z_0)^T\in X_e
}
is given by 
\begin{subequations}
  \label{eq:CLsysop}
  \eqn{
    &\hspace{.5cm} A_e = \pmat{A-B\Dc Q_1\CL&BQ_2\CcL\\-\Bc Q_1\CL & \Ac -\Bc Q_1D\CcL},\\
    \Dom(A_e) &= \biggl\{\pmat{x\\z}\in 
      \XB\times \ZG
      \biggm|
      \begin{array}{l}
	(A-B\Dc Q_1\CL)x+BQ_2\CcL z\in X\\
	-\Bc Q_1\CL x+ (\Ac -\Bc Q_1D\CcL)z\in Z
      \end{array}
      \biggr\},
  }
\end{subequations}
where $Q_1 = (I+D\Dc )\inv$ and $Q_2=(I+\Dc D)\inv$, 
and that
$A_e$ generates a strongly continuous contraction semigroup $T_e(t)$ on $X_e$. 

\begin{remark}
  \label{rem:Dcrole}
  Our results assume that~\eqref{eq:plantintro} is stable and its transfer function $P(\gl)$ satisfies certain additional conditions. However, the results are also immediately applicable when~\eqref{eq:plantintro} is unstable but can be stabilized with a suitable output feedback. 
  Indeed, if $\Dc>0$, we can write $\Dc = \Dcone+\Dctwo$ with $\Dcone \geq 0$ and $\Dctwo>0$.
  Lemma~\ref{lem:Repostoinv}(d) implies that $u(t)=-\Dctwo y(t)$ with $\Dctwo>0$ is an admissible feedback for $\PARsys$
and the resulting system $\PARsysS=(A-B\Dctwo Q_1^S \CL,BQ_2^S,Q_1^S\CL,Q_1^SD)$ with
$Q_1^S=(I+D\Dctwo)\inv$ and $Q_2^S=(I+\Dctwo D)\inv$ is regular~\cite{Wei94}.
A direct computation shows that
\eq{
  A_e
  &=\pmat{A^S-B^S\Dcone Q_3\CL^S&B^SQ_4\CcL\\-\Bc Q_3\CL^S & \Ac -\Bc Q_3D^S\CcL}.
}
Since this operator has exactly the same form as the original $A_e$, in each of our results it is possible to replace  $\PARsys$ with the stabilized system $\PARsysS$, the transfer function $P(\gl)$ with $P_S(\gl)=\CL^S R(\gl,A^S)B^S+D^S$, and the feedthrough operator $\Dc\geq 0$ with $\Dcone\geq 0$.
It is important to note that if $P(\gl)$ is invertible and $\re P(\gl)\geq 0$ for some $\gl\in \rho(A)$, then for any $\Dctwo>0$ we have $\re P_S(\gl)>0$.
\end{remark}

\subsection{Strong Stability}

The following theorem presents sufficient conditions for the strong stability of the closed-loop system.

\begin{theorem}
  \label{thm:CLstabstr}
  Assume $\PARsys$ is
  passive and strongly stable in such a way that $i\R\subset \rho(A)$.
Moreover, assume $\PARcontr$ is passive,
$\Dc\geq 0$,
and the following hold for some $\SIndset\subset \Z$.
  \begin{itemize}
    \setlength{\itemsep}{.5ex}
    \item[\textup{(1)}] 
      $\gs(\Ac )\cap i\R=\set{i\gw_k}_{k\in \SIndset}$ and $\re P(i\gw_k)>0$ for all $k\in\SIndset$.
    \item[\textup{(2)}]
  $I+P(i\gw)\G(i\gw)$ has a bounded inverse for every $\gw\in \R\setminus \set{\gw_k}_{k\in\SIndset}$ for which $\re \G(i\gw)$ is not boundedly invertible.
    \item[\textup{(3)}]
       $\set{i\gw_k}_{k\in \SIndset}\subset \rho(\Ac  -\Bc D_0(I+\Dc D_0)\inv \CcL)$ whenever $\re D_0>0$.
\end{itemize}
Then $i\R\subset \rho(A_e)$ and the closed-loop system is strongly stable.

  Assume in addition that $\SIndset\subset \Z$ is finite, $\PARsys$ is exponentially stable,
  and $\sup_{\abs{\gw}\geq \gwlim}\norm{R(i\gw,\Ac )}<\infty$ for some $\gwlim>0$.
  If we either have  
  $\limsup_{\abs{\gw}\to \infty}\norm{\G(i\gw)P(i\gw)}<1$,
  or if
$\re P(i\gw)\geq \eta(\gw)\geq 0$ and $\re \G(i\gw)\geq \dc(\gw)\geq 0$ so that 
  $\eta(\gw)+\dc(\gw)\geq \eta_0>0$ for some constant $\eta_0>0$ and for all sufficiently large $\abs{\gw}$,
  then the closed-loop system is exponentially stable.
\end{theorem}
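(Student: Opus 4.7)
The semigroup $T_e(t)$ is a contraction (as recorded just before the theorem), hence bounded. By the Arendt--Batty--Lyubich--Vu theorem, strong stability will follow once I establish $i\R\subset \rho(A_e)$, since this automatically implies $\gs_p(A_e^\ast)\cap i\R=\emptyset$. Fix $\gw\in\R$ and argue by cases. If $\gw\notin\set{\gw_k}_{k\in\SIndset}$ then $i\gw\in \rho(A)\cap\rho(\Ac)$ by condition (1) and the assumption $i\R\subset\rho(A)$. A block Woodbury computation based on Lemma~\ref{lem:Woodbury} expresses $(i\gw-A_e)\inv$ as a bounded combination of $R(i\gw,A)$, $R(i\gw,\Ac)$, $P(i\gw)$, $\G(i\gw)$, $Q_1$, and $Q_2$, provided that an operator reducing up to bounded invertible factors to $I+P(i\gw)\G(i\gw)$ is boundedly invertible. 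When $\re\G(i\gw)>0$, coercivity of $\G(i\gw)$ combined with the factorization $I+P\G=(P+\G\inv)\G$ yields invertibility from passivity ($\re P\geq 0$, $\re \G\inv\geq 0$); when $\re\G(i\gw)=0$, condition (2) provides it directly.

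\textbf{Resonance frequencies.}
If $\gw=\gw_k$ for some $k\in\SIndset$, then $i\gw_k\in\rho(A)\cap\gs(\Ac)$, so the plant block can still be inverted while the controller block cannot. Set $D_0:=P(i\gw_k)$; condition (1) gives $\re D_0>0$, and then condition (3) asserts $i\gw_k\in\rho(\Ac-\Bc D_0(I+\Dc D_0)\inv\CcL)$. A block Schur elimination of the plant variable in $i\gw_k-A_e$ produces exactly this modified controller operator (up to admissible bounded corrections involving $R(i\gw_k,A)$, $B$, and $\Bc$); invertibility of the Schur complement combined with $i\gw_k\in\rho(A)$ then yields $i\gw_k\in\rho(A_e)$.

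\textbf{Strategy for exponential stability.}
Having established $i\R\subset\rho(A_e)$ and with $T_e(t)$ a contraction on a Hilbert space, the Gearhart--Pr\"uss theorem reduces exponential stability to the uniform bound $\sup_{\gw\in\R}\norm{R(i\gw,A_e)}<\infty$. Finiteness of $\SIndset$ together with continuity of the resolvent handles any compact frequency interval containing all $\gw_k$. For $\abs{\gw}\geq \gwlim$, exponential stability of $(A,B,C,D)$ yields a uniform bound on $R(i\gw,A)$ and $P(i\gw)$, and the hypothesis on $\Ac$ gives the same for $R(i\gw,\Ac)$. The Woodbury representation then reduces the claim to a uniform bound on $\norm{(I+\G(i\gw)P(i\gw))\inv}$. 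Under the small-gain hypothesis this is immediate from a Neumann series with constant $(1-\limsup\norm{\G P})\inv$. Under the coercivity hypothesis, at each large $\abs{\gw}$ one has either $\re \G(i\gw)\geq \gg_0/2$ or $\re P(i\gw)\geq \gg_0/2$; choosing the factorization $I+\G P=\G(\G\inv+P)$ or $I+\G P=(P\inv+\G)P$ accordingly and combining passivity ($\re P, \re \G\geq 0$) with the uniform operator bounds yields the required estimate.

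\textbf{Main obstacle.}
The chief technical difficulty is making the Woodbury computation for $R(i\gw,A_e)$ rigorous in the presence of the unbounded $\Lambda$-extensions $\CL$ and $\CcL$: intermediate algebraic identities must be verified on the natural dense domains within $\XB\times \ZG$ and then extended to all of $X_e$ using admissibility of $B$ and $\Bc$. A secondary subtlety arises at the resonance frequencies $\gw_k$, where the feedback $D_0=P(i\gw_k)$ must be admissible for condition (3) to apply; this follows from $\re P(i\gw_k)>0$ via the admissibility machinery for output feedback of passive regular systems already invoked in Remark~\ref{rem:Dcrole}.
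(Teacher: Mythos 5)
Your proposal is correct and follows essentially the same route as the paper's proof: establish $i\R\subset\rho(A_e)$ via a Schur complement in the plant variable and the Woodbury formula, handle the resonance frequencies by applying condition~(3) with $D_0=P(i\gw_k)$ (where $\re D_0>0$ by condition~(1)), handle the remaining frequencies via condition~(2) or the factorization $I+\G P=\G(\G\inv+P)$ together with Lemma~\ref{lem:Repostoinv}, and then conclude strong stability from Arendt--Batty--Lyubich--V\~{u} and exponential stability from Gearhart--Pr\"uss after bounding $\norm{P(i\gw)(I+\G(i\gw)P(i\gw))\inv}$ uniformly for large $\abs{\gw}$. Your case split under the coercivity hypothesis is just an unwound version of Lemma~\ref{lem:Repostoinv}(b), which the paper cites directly, and your concerns about the $\Lambda$-extensions are resolved in the paper by first closing the loop with the static feedback $u=-\Dc y$ to obtain the regular system $(A^{cl},B^{cl},C^{cl},D^{cl})$ before forming the Schur complement.
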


\begin{proof}
  We begin by showing that $i\R\subset \rho(A_e)$. 
 Since the semigroup generated by $A_e$ is uniformly bounded by Lemma~\ref{lem:CLreg}, the strong stability of $T_e(t)$ then follows from the Arendt--Batty--Lyubich--V\~{u} Theorem~\cite{AreBat88,LyuVu88}.

Lemma~\ref{lem:Repostoinv}(d) implies that $u(t)=-\Dc y(t)$ is an admissible output feedback for $\PARsys$, and by~\cite{Wei94} the resulting system  $(A^{cl},B^{cl},\CL^{cl},D^{cl})=(A-B\Dc Q_1\CL,$ $BQ_2,Q_1\CL,Q_1D)$ is regular. The assumption $i\R\subset \rho(A)$ and Lemma~\ref{lem:Afbreg} imply $i\R\subset \rho(A^{cl})$,
and by Lemma~\ref{lem:Repostoinv}(d) the transfer function $P_{cl}(\gl)$ is given by $P_{cl}(i\gw)=P(i\gw)(I+\Dc P(i\gw))\inv$ for all $\gw\in\R$. 
If $\gw\in\R$ and if we denote $R_{i\gw} = R(i\gw,A^{cl})$, then $i\gw-A_e$ has a bounded inverse given by
\eq{
  R(i\gw,A_e) 
  = \pmat{R_{i\gw}-R_{i\gw} B^{cl} \CcL S_A(i\gw)\inv \Bc  \CL^{cl} R_{i\gw}&R_{i\gw} B^{cl} \CcL S_A(i\gw)\inv\\-S_A(i\gw)\inv \Bc  \CL^{cl} R_{i\gw} &S_A(i\gw)\inv}
}
provided that the Schur complement 
  \eq{
    S_A(i\gw)
    &= i\gw-\Ac  + \Bc  D^{cl} \CcL + \Bc  \CL^{cl} R(i\gw,A^{cl})B^{cl} \CcL \\
    &= i\gw-\Ac  + \Bc  P(i\gw)(I+\Dc P(i\gw))\inv \CcL
  }
with domain $\Dom(S_A(i\gw)) = \setm{z\in \Dom(\CcL)}{S_A(i\gw)z\in Z}$ 
has a bounded inverse.
If $\gw=\gw_n$ for some $n\in\SIndset$, then  $\re P(i\gw_n)>0$ and assumption~(3)
imply that $S_A(i\gw_n)$ is boundedly invertible. Thus $\set{i\gw_k}_{k\in\SIndset}\subset \rho(A_e)$.

Now let $\gw\in \R\setminus \set{\gw_k}_{k\in\SIndset}$. If $\re G(i\gw)\not>0$, then $I+\G(i\gw)P(i\gw)$ is invertible by condition~(2) of the theorem. By Lemma~\ref{lem:Repostoinv}(a) the same is also true if $\re G(i\gw)>0$, since $I+\G(i\gw)P(i\gw)=\G(i\gw)(\G(i\gw)\inv + P(i\gw))$.
Because
\eq{
I+\Dc P(i\gw)+  \CcL R(i\gw,\Ac ) \Bc P(i\gw)
=  I+ \G(i\gw)P(i\gw),
}
Lemma~\ref{lem:Woodbury} implies that
$S_A(i\gw)$ has a bounded inverse 
  \eqn{
  \label{eq:SAinvWoodbury}
  S_A(i\gw)\inv 
  &= R(i\gw,\Ac ) \bigl[I \hspace{-.2ex} - \hspace{-.2ex}  \Bc P(i\gw)(I \hspace{-.2ex} + \hspace{-.2ex} \G(i\gw) P(i\gw) )\inv
  \CcL  R(i\gw,\Ac )\bigl]. \hspace{-1.6ex}
  }
  Thus $i\gw\in \rho(A_e)$ also for all $\gw\in\R \setminus\set{\gw_k}_{k\in\SIndset}$. 
Since the semigroup $T_e(t)$ is contractive, the closed-loop system is strongly stable.

  Finally, assume that $\SIndset\subset \Z$ is finite, $\PARsys$ is exponentially stable, and $\sup_{\abs{\gw}\geq \gwlim}\norm{R(i\gw,\Ac )}<\infty$ for some $\gwlim>0$.
   The stability and regularity of $\PARsys$ imply that the norms $\norm{R(\cdot,A)}$, $\norm{R(\cdot,A)B}$, $\norm{\CL R(\cdot,A)}$, and $\norm{P(\cdot)}$ are uniformly bounded on $i\R$.
  Similarly the regularity of the controller implies that 
  $\norm{R(i\gw,\Ac )}$, $\norm{R(i\gw,\Ac )\Bc}$, $\norm{\CcL R(i\gw,\Ac )}$, and $\norm{\CcL R(i\gw,\Ac ) \Bc}$ are uniformly bounded with respect to $\gw\in\R$ with $\abs{\gw}\geq \gwlim$.
  If
  $\limsup_{\abs{\gw}\to \infty}\norm{\G(i\gw)P(i\gw)}<1$
  the norms $\norm{P(i\gw)(I+\G(i\gw)P(i\gw))\inv}$ are uniformly bounded for large $\abs{\gw}$.
  On the  other hand, if
  $\eta(\gw)+\dc(\gw)\geq \eta_0>0$, then Lemma~\ref{lem:Repostoinv}(b) implies $\norm{P(i\gw)(I+\G(i\gw)P(i\gw))\inv}\lesssim \eta_0\inv $.
   Thus~\eqref{eq:SAinvWoodbury} implies that
   $\norm{R(i\gw,A_e)}$ is uniformly bounded for large $\abs{\gw}$. 
  Since $i\R\subset \rho(A_e)$ and $T_e(t)$ is contractive, the 
  closed-loop system is exponentially stable.
\end{proof}

\begin{remark}
  \label{rem:Dc0}
  Condition~(2) is in particular satisfied if
$\re \G(i\gw)>0$ for all 
 $\gw\in \R\setminus \set{\gw_k}_{k\in\SIndset}$. Moreover, if  
 $\re \G(i\gw)\geq \dc>0$ for some constant $\dc>0$ and for all $\gw\in \R\setminus \set{\gw_k}_{k\in\SIndset}$, then 
$\norm{P(i\gw)(I+\G(i\gw) P(i\gw) )\inv}\leq \dc\inv$
 for all 
 $\gw\in \R\setminus \set{\gw_k}_{k\in\SIndset}$ by Lemma~\ref{lem:Repostoinv}(b).

  The proof of Theorem~\ref{thm:CLstabstr} can also be adapted to show that if $\re P(i\gw)>0$ for all $\gw\in\R$, then
   $T_e(t)$ is strongly stable and $i\R\subset \rho(A_e)$ even without assumption~(2).
  Indeed, if $\gw\in \R\setminus \set{\gw_k}_{k\in\SIndset}$ 
  and $\re P(i\gw)>0$, 
  then Lemma~\ref{lem:Repostoinv}(a) implies that both
  $P(i\gw)$ and $I+\G(i\gw) P(i\gw) = (P(i\gw)\inv + \G(i\gw))P(i\gw)$ are boundedly invertible, and $S_A(i\gw)$ has the bounded inverse given by the formula~\eqref{eq:SAinvWoodbury}. 
  Thus we again have
   $i\gw\in\rho(A_e)$.
   Lemma~\ref{lem:Repostoinv}(b) also shows that if $\eta(\gw)>0$  is such that $\re P(i\gw)\geq \eta(\gw)>0$, then 
  $\norm{P(i\gw)(I+\G(i\gw) P(i\gw))\inv} \leq \eta(\gw)\inv \norm{P(i\gw)}^2$.
\end{remark}

  The following lemma provides a sufficient condition for the assumption~(3) in Theorem~\ref{thm:CLstabstr} for isolated spectral points under a suitable observability property.
\begin{lemma}
  \label{lem:CLstabstrongsuff}
Assume $\PARcontr$ is passive with
$\Dc\geq 0$.
Assume further that $i\gw_k\in \gs(A_c)$ is an isolated spectral point 
and $A_c$ has a spectral decomposition $A_c=A_c^0+A_c^c$ according to $Z=\ker(i\gw_k-A_c)\oplus \ker(i\gw_k-A_c)^\perp$ so that $i\gw_k\in\rho(A_c^c)$, 
and there exists $\gg>0$ such that $\norm{\CcL z}\geq \gg \norm{z}$ for all $z\in \ker(i\gw_k-A_c)$. 
Then $i\gw_k\in \rho(A_c-B_cD_0(I+D_cD_0)\inv \CcL)$ for any $D_0\in \Lin(U)$ with $\re D_0>0$.
\end{lemma}

\begin{proof}
  Let $D_0\in \Lin(U)$ be such that $\re D_0\geq d_0>0$ and denote $D_1=D_0(I+\Dc D_0)\inv$. 
  Due to the passivity of $\PARcontr$ and~\cite[Cor. 4.3.2]{AreBat11book} we have 
   $i\gw_k\in\gs(\Ac - \Bc D_1\CcL)$ provided that
   $\norm{(i\gw_k-\Ac + \Bc D_1\CcL)z}\geq c \norm{z}$ for some constant $c>0$ and for all  
    $z\in \Dom(A_c-B_cD_1\CcL)\subset \ZG$.
    Let $z\in \Dom(A_c-B_cD_1\CcL)$ and denote $y=(i\gw_k-\Ac + \Bc D_1\CcL)z$.
The passivity of $(\Ac ,\Bc,\CcL ,\Dc)$
  implies 
  \eq{
    \re \iprod{y}{z}
    &=-\re \iprod{\Ac z+ \Bc (-D_1\CcL z)}{z}
    \geq \re \iprod{\CcL z- \Dc D_1\CcL z}{D_1\CcL z}\\
    &= \re \iprod{(I+\Dc D_0)\inv \CcL z}{D_0(I+\Dc D_0)\inv \CcL z}\\
   & \geq   d_0\norm{I+\Dc D_0}^{-2} \norm{\CcL z}^2 .
  }
  Thus $\norm{\CcL z}^2\lesssim \norm{z}\norm{y}$.
  Write $z=z^k+z^c$ according to the decomposition $Z=\ker(i\gw_k-A_c)\oplus \ker(i\gw_k-A_c)^\perp$.
  If we apply $R_1=R(i\gw_k+1,A_c)$ to both sides of  $y=(i\gw_k-A_c+B_cD_1\CcL)z$ and use 
 $R_1z^k\in \ker(i\gw_k-A_c)$ we obtain
 \eqn{
   \label{eq:CLstabstrongsuff}
  (i\gw_k-A_c^c)R_1 z^c = R_1 y-R_1B_cD_1\CcL z .
} 
Since
$R_1B_c \in \Lin(U,Z)$ and 
$i\gw_k-A_c^c$ is boundedly invertible by assumption, 
we have $\norm{R_1z^c}^2\lesssim \norm{(i\gw_k-A_c^c)R_1z^c}^2\lesssim\norm{y}^2 + \norm{\CcL z}^2 \lesssim \norm{y}^2 + \norm{z} \norm{y}$.
Moreover, 
$(i\gw_k-A_c)R_1z^c=z^c-R_1z^c$ and  
$\norm{z^c}\leq \norm{z}$ together with~\eqref{eq:CLstabstrongsuff} further imply
 \eq{
   \norm{z^c}^2
   &= \norm{R_1 z^c+ R_1y-R_1B_cD_1\CcL z}^2\\
   &\lesssim \norm{R_1z^c}^2 + \norm{y}^2 + \norm{\CcL z}^2
   \lesssim  \norm{y}^2 + \norm{z} \norm{y}\\
   \norm{\CcL z^c}^2
   &= \norm{\CcL R_1 (z^c+y)-\CcL R_1B_cD_1\CcL z}^2 \\
   &\lesssim \norm{z^c}^2 + \norm{y}^2 + \norm{\CcL z}^2
   \lesssim  \norm{y}^2 + \norm{z} \norm{y}.
 }
 Finally, since $\norm{z^k}^2\leq \gg^{-2}\norm{\CcL z^k}^2\lesssim \gg^{-2}(\norm{\CcL z}^2+\norm{\CcL z^c}^2)\lesssim \norm{y}^2 + \norm{z}\norm{y}$, we have
 $  \norm{z}^2 = \norm{z^k}^2 + \norm{z^c}^2\lesssim \norm{y}^2 + \norm{z}\norm{y}$, and thus also $\norm{z}\lesssim \norm{y}$.
\end{proof}

\subsection{Exponential Stability}

The following theorem presents sufficient conditions for exponential stability of the closed-loop system.
The transfer function
$P(i\gw)$ is allowed to be non-invertible for some values $\gw\in\R$ (i.e., the system $\PARsys$ may have ``transmission zeros'' on $i\R$), but  such points must be uniformly disjoint from the spectrum of $\Ac$. 
It should be noted that the result also remains valid if the conditions are satisfied for $\Omega=\R$.
Condition~(2) is in particular satisfied if $\re G(i\gw)\geq \dc>0$ for some constant $\dc>0$ and for all $\gw\in \R\setminus \Omega$.
Here \keyterm{exponential stabilizability} and \keyterm{exponential detectability} of a regular linear system are defined as in~\citel{Reb93}{Def.~1.4--1.5} and~\citel{WeiCur97}{Sec.~III}.

\begin{theorem}
  \label{thm:CLstabexp}
    Assume $\PARsys$ is passive and exponentially stable,  $\re D>0$,
    and 
    there exist $\Omega\subset \R$ and $\eta_0>0$ such that $\re P(i\gw)\geq \eta_0>0$ for all $\gw \in \Omega $.
Moreover, assume $\PARcontr$ is passive,
$\Dc\geq 0$,
and the following hold.
  \begin{itemize}
    \setlength{\itemsep}{.5ex}
    \item[\textup{(1)}]
      $\gs(\Ac)\cap i\R\subset i\Omega$
      and  $\sup_{\gw\in\R\setminus\Omega}\norm{R(i\gw,\Ac)}<\infty$.
    \item[\textup{(2)}] 
	Let $\eta(\cdot),d_c(\cdot):\R\setminus \Omega\to [0,1]$ be such that
	$\re P(i\gw)\geq \eta(\gw)\geq 0$ and $\re \G(i\gw)\geq \dc(\gw)\geq 0$ for all $\gw\in \R\setminus \Omega$.
	Assume there exist $0<\gd<1$ and $\eta_1>0$ such that
	for each $\gw\in \R\setminus \Omega$ either $\norm{\G(i\gw)P(i\gw)}\leq \gd <1$ or
	$\eta(\gw)+\dc(\gw)\geq \eta_1>0$.
\item[\textup{(3)}]  
  The system $(\Ac,\Bc,\CcL,\Dc)$ is exponentially stabilizable and detectable.
  \end{itemize}
  Then the closed-loop system is exponentially stable.
\end{theorem}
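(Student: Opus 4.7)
My plan is to apply the Gearhart--Pr\"{u}ss theorem to the contractive semigroup $T_e(t)$ generated by $A_e$ (contractivity guaranteed by Lemma~\ref{lem:CLreg}); it then suffices to establish $i\R\subset\rho(A_e)$ together with the uniform bound $\sup_{\gw\in\R}\norm{R(i\gw,A_e)}<\infty$. As in the proof of Theorem~\ref{thm:CLstabstr}, the Schur-complement representation of $R(i\gw,A_e)$ reduces both requirements to showing that
\[
S_A(i\gw) = i\gw - \Ac + \Bc P(i\gw)\bigl(I+\Dc P(i\gw)\bigr)^{-1}\CcL
\]
is boundedly invertible on $i\R$ with uniformly bounded inverse. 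I would split the analysis into $\gw\in\R\setminus\Omega$ and $\gw\in\Omega$.

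For $\gw\in\R\setminus\Omega$ the argument is essentially the exponential-stability portion of the proof of Theorem~\ref{thm:CLstabstr}. Exponential stability and regularity of $(A,B,C,D)$ yield uniform bounds on $\norm{R(i\gw,A)}$, $\norm{R(i\gw,A)B}$, $\norm{\CL R(i\gw,A)}$ and $\norm{P(i\gw)}$ on $i\R$, while assumption~(1) together with regularity of the controller yields uniform bounds on $\norm{R(i\gw,\Ac)}$, $\norm{R(i\gw,\Ac)\Bc}$, $\norm{\CcL R(i\gw,\Ac)}$ and $\norm{\G(i\gw)}$ for $\gw\in\R\setminus\Omega$. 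The dichotomy in assumption~(2) then delivers a uniform bound on $\norm{P(i\gw)(I+\G(i\gw)P(i\gw))^{-1}}$: either by a Neumann series when $\norm{\G(i\gw)P(i\gw)}\leq\gd<1$, or via Lemma~\ref{lem:Repostoinv}(b) when $\gg(\gw)+\dc(\gw)\geq\gg_0$. Substituting these into~\eqref{eq:SAinvWoodbury} gives the uniform bound on $\norm{S_A(i\gw)^{-1}}$ for $\gw\in\R\setminus\Omega$.

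The case $\gw\in\Omega$ is the main obstacle, since $i\gw$ may lie in $\gs(\Ac)$ and the resolvent of $\Ac$ is no longer available; the plan is to use assumption~(3) to substitute $\Ac$ with the exponentially stable, passive operator $\Ac^\mu := \Ac - \Bc\mu D(I+\Dc\mu D)^{-1}\CcL$ for a suitable $\mu\in(0,1)$, so that $\norm{R(i\gw,\Ac^\mu)}$ is uniformly bounded on $i\R$. Writing $S_A(i\gw) = (i\gw-\Ac^\mu) + \Bc F_\mu(i\gw)\CcL$ with
\[
F_\mu(i\gw) := P(i\gw)(I+\Dc P(i\gw))^{-1} - \mu D(I+\Dc\mu D)^{-1}
\]
and applying a Woodbury-type identity relative to $\Ac^\mu$, the invertibility of $S_A(i\gw)$ reduces to that of an operator involving $F_\mu(i\gw)$ and the pre-stabilized transfer function $\G_\mu(i\gw)$. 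The positivity $\re P(i\gw)\geq\gg>0$ on $\Omega$ propagates, via $(P(I+\Dc P)^{-1})^{-1} = P^{-1}+\Dc$ and $\re P^{-1}\geq\gg\norm{P}^{-2}$, into a uniform lower bound $\re[P(i\gw)(I+\Dc P(i\gw))^{-1}]\geq\kappa>0$ on $\Omega$; combined with passivity of $\G_\mu$ and Lemma~\ref{lem:Repostoinv}, this should yield the uniform invertibility and the required uniform bound on $\norm{S_A(i\gw)^{-1}}$ for $\gw\in\Omega$. Combining the two cases and invoking Gearhart--Pr\"{u}ss then gives exponential stability of $T_e(t)$. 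The delicate point is to calibrate $\mu\in(0,1)$ (e.g.\ with $\mu\norm{D}<\gg$) so that the propagated positivity of $F_\mu$ suffices for the Woodbury-type argument to close uniformly on $\Omega$, and to verify admissibility of the feedback $-\mu D$ for $(\Ac,\Bc,\Cc,\Dc)$ implicit in the definition of $\Ac^\mu$.
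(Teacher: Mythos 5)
Your overall architecture is the same as the paper's: Gearhart--Pr\"uss for the contraction semigroup $T_e(t)$, a split into $\gw\in\R\setminus\Omega$ (handled exactly as in the exponential part of the proof of Theorem~\ref{thm:CLstabstr}, using the dichotomy in assumption~(2)) and $\gw\in\Omega$ (where assumption~(3) is used to trade $\Ac$ for an exponentially stable $\Ac^\mu$). The $\R\setminus\Omega$ half of your argument is correct and matches the paper.

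The $\Omega$ half, however, has a genuine gap. You rewrite $S_A(i\gw)=(i\gw-\Ac^\mu)+\Bc F_\mu(i\gw)\CcL$ and apply Woodbury relative to $\Ac^\mu$. The resulting invertibility criterion is that $F_\mu(i\gw)\inv+\CcL R(i\gw,\Ac^\mu)\Bc$ be boundedly invertible, and the second summand is (up to conjugation by $I+\mu D\Dc$ and $I+\Dc\mu D$) the stabilized controller's transfer function \emph{with its feedthrough removed}, $\Gmu(i\gw)-\Dc^\mu$. Passivity only controls $\re\Gmu(i\gw)\geq 0$; the feedthrough-stripped quantity has no sign in general. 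So even after calibrating $\mu$ to get $\re F_\mu(i\gw)\geq\kappa'>0$, Lemma~\ref{lem:Repostoinv} gives you $\re F_\mu(i\gw)\inv\geq \kappa'\norm{F_\mu(i\gw)}^{-2}$ added to an operator with an uncontrolled negative real part, and the sum need not be invertible, let alone uniformly. Positivity of $F_\mu$ is simply not the right currency here; your own ``delicate point'' is exactly where the argument fails to close.

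The paper's resolution is to split the \emph{plant's} feedthrough, $D=\mu D+\eta D$, feed the $\mu D$ part back into the controller (assumption~(3)) to obtain the exponentially stable passive system $(\Ac^\mu,\Bc^\mu,\Cc^\mu,\Dc^\mu)$, retain $\eta D$ as the feedthrough of the plant $(A,B,C,\eta D)$ with transfer function $P_\eta=P-\mu D$, and then also apply the feedback $u=-\Dc^\mu y$ to this modified plant. With everything transformed consistently, the Schur complement becomes
\[
S_A^\mu(i\gw)=i\gw-\Ac^\mu+\Bc^\mu P_\eta(i\gw)\bigl(I+\Dc^\mu P_\eta(i\gw)\bigr)\inv\Cc^\mu,
\]
and in the Woodbury criterion $Q\inv+\Cc^\mu R(i\gw,\Ac^\mu)\Bc^\mu$ with $Q=P_\eta(I+\Dc^\mu P_\eta)\inv$ the two occurrences of $\Dc^\mu$ cancel exactly, leaving $P_\eta(i\gw)\inv+\Gmu(i\gw)$, equivalently $I+\Gmu(i\gw)P_\eta(i\gw)$. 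This is a pairing of a strictly positive-real function ($\re P_\eta(i\gw)\geq\gg-\mu\norm{D}=\tilde\gg>0$ on $\Omega$ for small $\mu$ --- note it is $P_\eta$, not $F_\mu$, whose positivity must be calibrated) with a passive one ($\re\Gmu(i\gw)\geq0$), so Lemma~\ref{lem:Repostoinv}(a)--(b) gives the uniform bound $\norm{P_\eta(i\gw)(I+\Gmu(i\gw)P_\eta(i\gw))\inv}\leq\norm{P_\eta(i\gw)}^2/\tilde\gg$ and hence the uniform bound on $\norm{S_A^\mu(i\gw)\inv}$ over $\Omega$. To repair your proof you should adopt this double transformation (and also verify, via Lemmas~\ref{lem:Repostoinv} and~\ref{lem:IDPinvert}, admissibility of the feedback $-\Dc^\mu$ for $(A,B,C,\eta D)$ and invertibility of $I+\eta D\Dc^\mu$, not only admissibility of $-\mu D$ for the controller).
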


\begin{proof}
  Our aim is to show $i\R\subset \rho(A_e)$ and $\sup_{\gw\in\R} \norm{R(i\gw,A_e)}<\infty$.
First let $\gw\in \R \setminus \Omega$.
The proof of Theorem~\ref{thm:CLstabstr} shows that  $S_A(i\gw)$ has an inverse
  \eq{
    S_A(i\gw)\inv 
    &= R(i\gw,\Ac) \bigl[I -  \Bc  P(i\gw)(I+\G(i\gw) P(i\gw) )\inv
    \CcL  R(i\gw,\Ac)\bigl].
  }
  If $\norm{\G(i\gw)P(i\gw)}\leq \gd<1$, then $\norm{P(i\gw)(I+\G(i\gw) P(i\gw) )\inv}\leq \norm{P(i\gw)}/(1-\gd)$, and if $\eta(\gw)+\dc(\gw)\geq \eta_1>0$, 
  Lemma~\ref{lem:Repostoinv}(b) implies $\norm{P(i\gw)(I+\G(i\gw)P(i\gw))\inv}\leq \eta_1\inv \max \set{1,\norm{P(i\gw)}} $.
  Assumption (1) and the admissiblity of $\Bc $ and $\Cc$ imply $i\R\setminus i\Omega\subset \rho(A_e)$ and $\sup_{\gw\in\R\setminus \Omega}\norm{R(i\gw,A_e)}<\infty$.

It remains to consider $\gw\in\Omega$.
We decompose  $D$ into two parts $D=\mu D + \nu D$ with $\mu\in (0,1)$ and $\nu = 1-\mu$ in such a way that the first part stabilizes $\PARcontr$ exponentially and the second part can be used to show closed-loop stability.
Indeed, for any $\mu\in(0,1)$ the transfer function of the system $(\Ac^\mu,\Bc^\mu,\CcL^\mu,\Dc^\mu)$ obtained from $(\Ac,\Bc ,\CcL,\Dc)$ with the admissible output feedback $\uc(t)=-\mu D \yc(t)$ is given by $G(\gl)(I+\mu D G(\gl))\inv$. Since $\re D>0$, this transfer function is uniformly bounded on $\C_+$ by Lemma~\ref{lem:Repostoinv}(b), and since 
  $(\Ac^\mu,\Bc^\mu,\CcL^\mu,\Dc^\mu)$ is exponentially stabilizable and detectable due to assumption (3), the semigroup generated by $\Ac^\mu$ is exponentially stable~\citel{Reb93}{Cor. 1.8}. 

For all sufficiently small $\mu \in (0,1)$
the transfer function
$P_\nu(\gl)$ of $(A,B,\CL,$ $\nu D)$ satisfies $\re P_\nu(i\gw)\geq  \tilde{\eta}_0>0$ for some constant $\tilde{\eta}_0>0$ and for all $\gw\in\Omega$. 
Since $\Dc^\mu = \Dc (I+\mu D\Dc)\inv$, Lemmas~\ref{lem:Repostoinv} and~\ref{lem:IDPinvert} imply that we can choose $\mu\in (0,1)$
so that 
$I+\nu D \Dc^\mu  $
and $I+P_\nu(i\gw)\Dc^\mu  $ for all $\gw\in\Omega$ are invertible, and $\sup_{\gw\in\Omega} \norm{(I+P_\nu(i\gw)\Dc^\mu)\inv}<\infty$.
Thus $u(t)=-\Dc^\mu y(t)$ is an admissible output feedback for $(A,B,\CL,\nu D)$.
Denoting the resulting regular linear system with $(A^\mu,B^\mu,\CL^\mu,D^\mu) = (A-B\Dc^\mu Q_5^\mu \CL,BQ_6^\mu,Q_5^\mu \CL,\nu Q_5^\mu D)$ where
$Q_5^\mu = (I+\nu D\Dc^\mu)\inv$ and $Q_6^\mu = (I+\nu \Dc^\mu D)\inv$, we can write
\eq{
  A_e 
  & \hspace{-.3ex}=\hspace{-.3ex} \pmat{A-B\Dc^\mu Q_5^\mu \CL&BQ_6^\mu \CcL^\mu\\-\Bc^\mu Q_5^\mu \CL & \Ac^\mu-\nu\Bc^\mu Q_5^\mu D \CcL^\mu}
  \hspace{-.3ex}=\hspace{-.3ex} \pmat{A^\mu&B^\mu \CcL^\mu\\-\Bc^\mu \CL^\mu & \Ac^\mu-\Bc^\mu D^\mu \CcL^\mu}\hspace{-.3ex}.
}
Similarly as in Lemma~\ref{lem:Afbreg} we can show that $\sup_{\gw\in\Omega}\norm{R(i\gw,A^\mu)}<\infty$
and the transfer function
of $(A^\mu,B^\mu,\CL^\mu,D^\mu)$ satisfies $P_\mu(i\gw) = P_\nu(i\gw)(I+\Dc^\mu P_\nu(i\gw))\inv$ for all $\gw\in \Omega$. 
The transfer function of $(\Ac^\mu,\Bc^\mu,\CcL^\mu,\Dc^\mu)$ is denoted by $\Gmu(\gl)$.

Let $\gw\in \Omega$.
If we denote $R_{i\gw}^\mu = R(i\gw,A^\mu)$, then $i\gw-A_e$ has a bounded inverse 
\eq{
  R(i\gw,A_e) 
  = \pmat{R_{i\gw}^\mu-R_{i\gw}^\mu B^\mu \CcL^\mu S_A^\mu(i\gw)\inv \Bc^\mu \CL^\mu R_{i\gw}^\mu&R_{i\gw}^\mu B^\mu \CcL^\mu S_A^\mu(i\gw)\inv\\-S_A^\mu(i\gw)\inv \Bc^\mu \CL^\mu R_{i\gw}^\mu &S_A^\mu(i\gw)\inv}
}
provided that the Schur complement 
\eq{
  S_A^\mu(i\gw)
  &= i\gw-\Ac^\mu + \Bc^\mu D^\mu \CcL^\mu + \Bc^\mu \CL^\mu R(i\gw,A^\mu)B^\mu \CcL^\mu \\
  &= i\gw-\Ac^\mu + \Bc^\mu P_\nu(i\gw)(I+\Dc^\mu P_\nu(i\gw))\inv \CcL^\mu
}
has a bounded inverse.
If $S_A^\mu(i\gw)$ is boundedly invertible for all $\gw\in\Omega$, then
the regularity of $(A^\mu,B^\mu,\CL^\mu,D^\mu)$
and  $\sup_{\gw\in\Omega}\norm{R(i\gw,A^\mu)}<\infty$ 
imply $\sup_{\gw\in\Omega}\norm{R(i\gw,A_e)}<\infty$ provided that  
 $\norm{S_A^\mu(i\gw)\inv}$, $\norm{S_A^\mu(i\gw)\inv\Bc^\mu}$, $\norm{\CcL^\mu S_A^\mu(i\gw)\inv}$, and $\norm{\CcL^\mu S_A^\mu(i\gw)\inv \Bc^\mu}$ are uniformly bounded with respect to $\gw\in\Omega$.

Let $\gw\in\Omega$ be arbitrary.
Since $\re P_\nu(i\gw)\geq \tilde{\eta}_0>0$ and $\re \Gmu(i\gw)\geq 0$,
Lemma~\ref{lem:Repostoinv} implies that $P_\nu(i\gw)$ and $I+\Gmu(i\gw)P_\nu(i\gw) = (P_\nu(i\gw)\inv + \Gmu(i\gw)) P_\nu(i\gw)$ are boundedly invertible.
Therefore the same is true for
\eq{
I+\Dc^\mu P_\nu(i\gw)+  \CcL^\mu R(i\gw,\Ac^\mu) \Bc^\mu P_\nu(i\gw)
= I+ \Gmu(i\gw)P_\nu(i\gw).
}
Lemma~\ref{lem:Woodbury} implies that
$S_A^\mu(i\gw)$ has a bounded inverse 
  \eq{
  S_A^\mu(i\gw)\inv 
  = R(i\gw,\Ac^\mu) \bigl[I -  \Bc^\mu P_\nu(i\gw)(I+\Gmu(i\gw) P_\nu(i\gw) )\inv 
   \CcL^\mu  R(i\gw,\Ac^\mu)\bigl],
  }
where $\norm{P_\nu(i\gw)(I+\Gmu(i\gw) P_\nu(i\gw) )\inv}\leq \norm{P_\nu(i\gw)}^2/\tilde{\eta}_0$.
  Thus $i\gw\in \rho(A_e)$. 
  Since $\sup_{\gw\in\R}\norm{P_\nu(i\gw)}<\infty$ and $(\Ac^\mu,\Bc^\mu,\CcL^\mu,\Dc^\mu)$ is regular and exponentially stable, 
the norms $\norm{S_A^\mu(i\gw)\inv}$, $\norm{S_A^\mu(i\gw)\inv\Bc^\mu}$, $\norm{\CcL^\mu S_A^\mu(i\gw)\inv}$, and $\norm{\CcL^\mu S_A^\mu(i\gw)\inv \Bc^\mu}$ are uniformly bounded with respect to $\gw\in\Omega$.
This further implies that $\sup_{\gw\in\Omega}\norm{R(i\gw,A_e)}<\infty$, and the closed-loop system is exponentially stable.
\end{proof}

Since both $\PARsys$ and $\PARcontr$ are exponentially stabilizable in Theorem~\ref{thm:CLstabexp}, the exponential closed-loop stability could alternatively be studied using~\citel{WeiCur97}{Prop. 4.6}.

\subsection{Non-Uniform Closed-Loop Stability}

In this section we introduce conditions for polynomial and non-uniform stability of the closed-loop system
in the case where $\Ac$ is diagonal.
  In addition, our main result can be used as an alternative to Theorem~\ref{thm:CLstabexp} in showing exponential closed-loop stability.
The closed-loop system is said to be non-uniformly stable when $T_e(t)$ is uniformly bounded and $i\R\subset \rho(A_e)$ but the norms $\norm{R(i\gw,A_e)}$ are not bounded with respect to $\gw\in\R$. 
If $M_R(\cdot)$ is a continuous non-decreasing function such that
$\norm{R(i\gw,A_e)}\leq M_R(\abs{\gw})$, then
there exist $M_e,c,t_0>0$ such that
\eqn{
\label{eq:RORPnonuniformstate}
\norm{T_e(t)x_{e0}}\leq \frac{M_e}{M_T(t)} 
\norm{A_e x_{e0}}
\qquad \forall x_{e0}\in \Dom(A_e) , ~ t\geq t_0,
}
where the continuous non-decreasing function $M_T(\cdot):[0,\infty)\to (0,\infty)$ is determined by the results in~\cite{BatDuy08,BorTom10,RozSei19}. In particular, if $M_R(\gw)\lesssim 1+\gw^\ga$ for some $\ga>0$, we can choose $M_T(t)=t^{1/\ga}$~\cite{BorTom10}, and if $M_R(\gw)\lesssim 1+e^{\ga\gw }$
for some $\ga>0$, then
we can choose $M_T(t)=\log(t)/\ga$~\cite[Ex. 1.6]{BatDuy08}.

In this section we assume 
$(A_c,B_c,\CcL,D_c)$ is regular and passive with $\Dc\geq 0$ on a Hilbert space
$Z=\bigotimes_{k\in\SIndset}Z_k$ with norm $\norm{(z_k)_k}_{Z}^2 = \sum_{k\in\SIndset} \norm{z_k}_{Z_k}^2$ where
 $Z_k$ are Hilbert and  $\SIndset\subset \Z$ is infinite. 
We assume $A_c$ has the structure
  \eqn{
  \label{eq:Acdiag}
    \Ac  &= 
    \diag(i\gw_k I_{Z_k})_{k\in \SIndset}, ~\;
    \Dom(\Ac)= \Setm[big]{(z_k)_k\in Z}{\sum_{k\in\SIndset}\abs{\gw_k}^2\norm{z_k}_{Z_k}^2<\infty},
    \hspace{-1.7ex}
  }
  where $\gw_k\neq \gw_l$ for $k\neq l$ and $\set{\gw_k}_k$ has no finite accumulation points.
  Since $A_c$ is skew-adjoint, the operators $B_c\in \Lin(Y,Z_{-1})$ and $\Cc\in \Lin(Z_1,Y)$ are formally adjoint, i.e., $\iprod{\Bc u}{z}_{-1,1}=\iprod{u}{\Cc z}$ for all $z\in \Dom(A_c)$ and $u\in Y$, and thus 
\eq{
  \Bc u= (B_{ck} u)_{k\in \SIndset}, \qquad  \mbox{and} \qquad
  \Cc z=\sum_{k\in \SIndset} B_{ck}^\ast z_k, \quad z=(z_k)_{k\in\SIndset}\in \Dom(A_c)
}
for some $B_{ck} \in \Lin(Y,Z_k)$. 
Our main result uses \keyterm{wavepackets} of $A_c$~\citel{TucWei09book}{Sec. 6.9}.

\begin{definition}
  \label{def:wavepacket}
  Let $\gw\in \R$ and $\gd>0$.
  An element $z=(z_k)_{k\in\SIndset}\in Z$ is a \emph{$(\gw,\gd)$-wavepacket of $A_c$} if 
   $z_k=0$ for those $k\in\SIndset$ for which $\abs{\gw-\gw_k}\geq \gd$.
\end{definition}

The following theorem is the main result of this section.
  The role of $\Omega_\eps\subset \R$ is to show that 
  only the behaviour of $\re P(i\gw)$ near $\gs(\Ac)=\set{i\gw_k}_{k\in\SIndset}$ affects the asymptotic growth of $\norm{R(i\gw,A_e)}$.
By~\citel{Mil12}{Cor. 2.17} $\gd(\cdot)$ and $\gg(\cdot)$ can be chosen as constant functions if and only if $(A_c,B_c)$ is exactly controllable.
 The assumption that $M_R(\cdot):[0,\infty)\to(0,\infty)$ has ``positive increase'' means that there exists $\ga,c,\gw_0>0$ such that $M_R(\gl\gw)\geq c\gl^\ga M_R(\gw)$ for all $\gl>0$ and $\gw\geq \gw_0$~\citel{RozSei19}{Sec. 2}, and this  condition is in particular satisfied if $M_R(\cdot)$ grows polynomially or exponentially.
  The estimation of $\norm{S_A(i\gw)\inv}$ in the proof
  extends techniques developed in~\cite{ChiPau19arxiv}.

  \begin{theorem}
    \label{thm:CLstabnonuniform}
    Assume $\PARsys$ 
    is passive and exponentially stable and the system $(\Ac,\Bc,\CcL,\Dc)$ is 
    passive with $\Ac$ of form~\eqref{eq:Acdiag} and $\Dc\geq 0$.
    Assume further that  
    condition~\textup{(2)} of Theorem~\textup{\ref{thm:CLstabexp}} is satisfied for $\Omega=\Omega_\eps:= \setm{\gw\in \R}{\exists k\in\SIndset:\abs{\gw-\gw_k}<\eps}$ with some $\eps>0$, and that there exist continuous non-increasing functions $\eta(\cdot),\gd(\cdot),\gg(\cdot):\R_+\to(0,1]$ with the following properties.
    \begin{itemize}
	\setlength{\itemsep}{.5ex}
      \item $\re P(i\gw)\geq \eta(\abs{\gw})$ for all $\gw\in \Omega_\eps$.
      \item $\norm{\Cc z}\geq \gg(\abs{\gw}) \norm{z}$ for every $\gw\in \R$ and every  $(\gw,\gd(\abs{\gw}))$-wavepacket $z$ of~$A_c$.
    \end{itemize}
    Then $T_e(t)$ is strongly stable, $i\R\subset \rho(A_e)$, and
    \eq{
      \norm{R(i\gw,A_e)}\leq M_R(\abs{\gw}), \qquad \mbox{where} \quad M_R(\cdot)=M_0 \eta(\cdot)\inv\gg(\cdot)^{-2}\gd(\cdot)^{-2} 
    }
    for some $M_0>0$. Moreover, the following hold.
    \begin{itemize}
      \item[\textup{(a)}] If $\sup_{\gw>0} M_R(\gw)<\infty$, then $T_e(t)$ is exponentially stable. 
      \item[\textup{(b)}] If $M_R(\cdot)$ is strictly increasing and has positive increase,
	then~\eqref{eq:RORPnonuniformstate} holds with $M_T(t)=M_R\inv(ct)$ for some constants $M_e,c,t_0>0$.
      \item[\textup{(c)}] 
	For all other $M_R(\cdot)$,~\eqref{eq:RORPnonuniformstate} holds with $M_T(t)=\Mlog\inv(ct)$ for some $M_e,c,t_0>0$ where
	$\Mlog(\gw) = M_R(\gw)\left( \log(1+M_R(\gw) )+\log(1+\gw)\right)$ for $\gw>0$.
    \end{itemize}
  \end{theorem}

\begin{proof}
    By Theorem~\ref{thm:CLstabstr} and Lemma~\ref{lem:CLstabstrongsuff} the closed-loop system is strongly stable and $i\R\subset \rho(A_e)$.
Once we show $\norm{R(i\gw,A_e)}\leq M_R(\abs{\gw})$ the stability properties of the closed-loop system follow from the characterization of exponential stability (part (a)), from~\citel{RozSei19}{Thm. 1.1} (part (b)), and from~\citel{BatDuy08}{Thm. 1.5} (part (c)).

Since $(A^{cl},B^{cl},\CL^{cl},D^{cl})$ is regular and exponentially stable by Lemma~\ref{lem:Afbreg},
we have from the proof of Theorem~\ref{thm:CLstabstr} that 
for all $\gw\in\R$
\eq{
  \norm{R(i\gw,A_e)}\lesssim \max\bigl\{
    &\norm{S_A(i\gw)\inv},
    \norm{S_A(i\gw)\inv B_c},
    \norm{\CcL S_A(i\gw)\inv},\\
    &\quad\norm{\CcL S_A(i\gw)\inv B_c}\bigr\},
}
where  $S_A(i\gw) = i\gw-\Ac  + \Bc P_{cl}(i\gw) \CcL $ and $P_{cl}(i\gw) = P(i\gw)(I+\Dc P(i\gw))\inv $.
Moreover, \eqref{eq:SAinvWoodbury} and our assumptions 
imply $\sup_{\gw\in\R\setminus \Omega_\eps}\norm{R(i\gw,A_e)}<\infty$ 
similarly as in the proof of Theorem~\ref{thm:CLstabexp}.
Thus it is sufficient to
show that for each
$\gw\in\Omega_\eps$
 the norms
    $\norm{S_A(i\gw)\inv}$,
    $\norm{S_A(i\gw)\inv B_c}$,
    $\norm{\CcL S_A(i\gw)\inv}$,
    $\norm{\CcL S_A(i\gw)\inv B_c}$ are bounded by $M_R(\abs{\gw}) $ for some constant $M_0>0$.

    We begin by showing $\norm{\CcL S_A(i\gw)\inv B_c}\leq M_R(\abs{\gw})$. Formula~\eqref{eq:SAinvWoodbury} implies that for all $\gw\in \Omega_\eps\setminus \set{\gw_k}_k$
    \eq{
      \MoveEqLeft     \CcL  S_A(i\gw)\inv B_c\\
      &=  \CcL R(i\gw,\Ac )B_c \bigl[I  -    (I  +   P(i\gw)\G(i\gw) )\inv P(i\gw)\CcL  R(i\gw,\Ac )B_c\bigl] \\
      &=  (G(i\gw)-\Dc) (I  +   P(i\gw)\G(i\gw) )\inv  (I+P(i\gw)D_c).
    }
    Since $\re P(i\gw)>0$ and $\re G(i\gw)\geq 0$,  $I+P(i\gw)G(i\gw)=P(i\gw)(P(i\gw)\inv +G(i\gw))$ is boundedly invertible by Lemma~\ref{lem:Repostoinv}(a). 
    If we denote $Q(i\gw)=(I+P(i\gw)G(i\gw))\inv $, the above formula and stability of $\PARsys$ implies
    \eq{
      \norm{\CcL S_A(i\gw)\inv B_c}
      &=  \norm{(G(i\gw)-\Dc) Q(i\gw)  (I+P(i\gw)D_c)}\\
      &\lesssim  \norm{G(i\gw)Q(i\gw)}+\norm{ Q(i\gw)}.
    }
    Here $\norm{G(i\gw)Q(i\gw)}\leq \eta(\abs{\gw})\inv$ by Lemma~\ref{lem:Repostoinv}(b). We claim that $\norm{Q(i\gw)}  \lesssim \eta(\abs{\gw})\inv$ for $\gw\in \Omega_\eps\setminus \set{\gw_k}_{k\in\SIndset}$.
    If this is not true, then (considering $Q(i\gw)^\ast$) there exist sequences $(s_n)_n\subset \Omega_\eps\setminus \set{\gw_k}_k$ and $(u_n)_n\subset Y$ with $\norm{u_n}=1$ such that 
 $\eta(\abs{s_n})\inv \norm{(I+G(is_n)^\ast P(is_n)^\ast)u_n} \to 0$ as $n\to\infty$.
Since $\sup_{\gw\in\R} \norm{P(i\gw)}<\infty$, we have that also
  \eq{
    0& \from \frac{1}{\eta(\abs{s_n})}\re \iprod{(I+G(is_n)^\ast P(is_n)^\ast )u_n}{P(is_n)^\ast u_n}
    \geq \frac{\re \iprod{P(is_n)u_n}{u_n}}{\eta(\abs{s_n})} 
  }
as $n\to \infty$, which is impossible since $\eta(\abs{s_n})\inv \re \iprod{P(is_n)u_n}{u_n} \geq 1 $ by assumption. This contradiction shows that the claim holds. 
Thus we have $\norm{\CcL S_A(i\gw)\inv B_c}\lesssim \eta(\abs{\gw})\inv \leq M_R(\abs{\gw})$ for some $M_0>0$ and for all $\gw\in \Omega_\eps\setminus \set{\gw_k}_k$, and by continuity the same estimate holds for every $\gw\in\Omega_\eps$.

    To estimate the norms 
    $\norm{S_A(i\gw)\inv}$,
    $\norm{S_A(i\gw)\inv B_c}$,
    $\norm{\CcL S_A(i\gw)\inv}$,
    let $\gw\in \Omega_\eps$ with $\abs{\gw}\geq 1$ and define $P_{\gw,\gd}=\diag(\gb_k I_{Z_k})_{k\in\SIndset}\in\Lin(Z)$ where $\gb_k=1$ for those $k\in \SIndset$ for which $\abs{\gw-\gw_k}<\gd(\abs{\gw})$ and $\gb_k=0$ otherwise. The operator $P_{\gw,\gd}$ is a spectral projection of $A_c$ associated to the part $\set{i\gw_k}_k\cap (i\gw-i\gd(\abs{\gw}),i\gw+i\gd(\abs{\gw}))$ of its spectrum and $P_{\gw,\gd}z$ is a $(\gw,\gd(\abs{\gw}))$-wavepacket of $A_c$ for every $z\in Z$.
    Let $u\in Y$ and $y\in Z$ be arbitrary and define $z=S_A(i\gw)\inv (B_c u+y)\in \ZG$, i.e., $(i\gw-A_c+B_cP_{cl}(i\gw)\CcL)z=B_c u+y$.

Define $z_0=P_{\gw,\gd}z$, $z_c=z-z_0$, $y_c=P_{\gw,\gd}y$, $y_c=y-y_0$. Similarly decompose $A_c=A_c^0+A_c^c$, $B_c=B_c^0+B_c^c$, and $\CcL=C_c^0 + \CcL^c$ where $A_c^0=A_cP_{\gw,\gd}$, $B_c^0=P_{\gw,\gd}B_c$ and $C_c^0=C_c P_{\gw,\gd}$.
      The diagonal structure of $A_c$ and 
      the decompositions imply
       \eq{
	 &(i\gw-A_c^c)z_c = y_c + B_c^c(u-P_{cl}(i\gw)\CcL z)\\
	 \Rightarrow \quad & 
	 z_c = R(i\gw,A_c^c) y_c + R(i\gw,A_c^c)B_c^c(u-P_{cl}(i\gw)\CcL z)\\
	 \Rightarrow \quad & 
	 \CcL z_c = \CcL R(i\gw,A_c^c) y_c + G_{0c}(i\gw)(u-P_{cl}(i\gw)\CcL z),
       }
       where we have denoted $G_{0c}(i\gw)=\CcL^c R(i\gw,A_c^c)B_c^c$.
       The system $(A_c^c,B_c^c,\CcL^c)$ is regular and due to the diagonal structure of $A_c$ we have
      $\norm{R(i\gw,A_c^c)}\lesssim \gd(\abs{\gw})\inv$. The resolvent identity $R(i\gw,A_c^c)=R(i\gw+1,A_c^c)+R(i\gw,A_c^c)R(i\gw+1,A_c^c)$ and the admissibility of $B_c^c$ and $C_c^c$ further imply
      \eq{
	\norm{R(i\gw,A_c^c) B_c^c}&\lesssim \gd(\abs{\gw})\inv,  \\
	\quad 
	\norm{\CcL R(i\gw,A_c^c)}&\lesssim \gd(\abs{\gw})\inv,  \\
	\quad 
\norm{G_{0c}(i\gw)}
	&\lesssim \gd(\abs{\gw})\inv.
      }
      Since $z_0$ is a $(\gw,\gd(\abs{\gw}))$-wavepacket, we have also 
      \ieq{
	\norm{z_0}\leq \gg(\abs{\gw})\inv \norm{C_c z_0} .
      }
      The above expressions for $z_c$ and $\CcL z_c$ together with $C_c z_0=\CcL z-\CcL z_c$ and  $\sup_{s\in\R}\norm{P_{cl}(is)}<\infty$ (Lemma~\ref{lem:IDPinvert}) therefore imply
      \eq{
	\MoveEqLeft[1.5]\norm{z}^2 
	=  \norm{z_c}^2 + \norm{z_0}^2 
	\leq \norm{z_c}^2+\gg(\abs{\gw})^{-2} \norm{C_c z_0}^2 \\
	&\lesssim \norm{z_c}^2+ \gg(\abs{\gw})^{-2} \norm{\CcL z}^2 + \gg(\abs{\gw})^{-2} \norm{\CcL^c z_c}^2 \\
	& \lesssim \bigl( \norm{R(i\gw,A_c^c)}^2+\gg(\abs{\gw})^{-2} \norm{\CcL R(i\gw,A_c^c)}^2\bigr) \norm{y_c}^2 +\gg(\abs{\gw})^{-2} \norm{\CcL z}^2\\
	&~\; + \bigl(\norm{R(i\gw,A_c^c)B_c^c}^2+\gg(\abs{\gw})^{-2}\norm{G_{0c}(i\gw)}^2\bigr)\bigl(\norm{u}^2 + \norm{P_{cl}(i\gw)}^2 \norm{\CcL z}^2\bigr)\\
	&\lesssim\gg(\abs{\gw})^{-2}\gd(\abs{\gw})^{-2} 
	\bigl( \norm{y}^2 + \norm{u}^2 +  \norm{\CcL z}^2\bigr).
      }

First let $u=0$
to estimate $\norm{S_A(i\gw)\inv}$ and $\norm{\CcL S_A(i\gw)\inv }$. Then $z=S_A(i\gw)y\in \Dom(S_A(i\gw))$.
    The passivity of $(\Ac ,\Bc,\CcL ,\Dc)$
    implies 
    \eq{
      \re \iprod{y}{x} & =-\re \iprod{\Ac z+ \Bc (-P_{cl}(i\gw)\CcL z)}{z}\\
      &\geq \re \iprod{\CcL z- \Dc P_{cl}(i\gw)\CcL z}{P_{cl}(i\gw)\CcL z}\\
      &= \re \iprod{(I+\Dc P(i\gw))\inv \CcL z}{P(i\gw)(I+\Dc P(i\gw))\inv \CcL z}\\
      &\geq \eta(\abs{\gw})  \norm{I+\Dc P(i\gw)}^{-2} \norm{\CcL z}^2 
      \geq \frac{\eta(\abs{\gw})}{M_P^2} \norm{\CcL z}^2,
    }
    where $M_P = 1+\norm{\Dc}\sup_{\gw\in\R}\norm{P(i\gw)}<\infty$, and thus we have $\norm{\CcL z}^2\lesssim \eta(\abs{\gw})\inv \norm{z}\norm{y}$.
      The above estimate for $\norm{z}^2$ (again with $u=0$) together with the scalar inequality $2ab\leq \eps a^2+b^2/\eps$ for $\eps>0$ implies
      \eq{
	\norm{z}^2 
	&\lesssim\gg(\abs{\gw})^{-2}\gd(\abs{\gw})^{-2} \bigl( \norm{y}^2  +  \norm{\CcL z}^2\bigr)\\
	&\lesssim \gg(\abs{\gw})^{-2}\gd(\abs{\gw})^{-2}  \norm{y}^2  +  \eta(\abs{\gw})\inv \gg(\abs{\gw})^{-2}\gd(\abs{\gw})^{-2}\norm{z}\norm{y}\\
	&\leq \gg(\abs{\gw})^{-2}\gd(\abs{\gw})^{-2}  \norm{y}^2  + \frac{\eps}{2}\norm{z}^2 +   \frac{1}{2\eps}\eta(\abs{\gw})^{-2} \gg(\abs{\gw})^{-4}\gd(\abs{\gw})^{-4}\norm{y}^2.
      }
Letting $\eps>0$ be small shows that
$\norm{z}\lesssim \eta(\abs{\gw})\inv\gg(\abs{\gw})^{-2} \gd(\abs{\gw})^{-2} \norm{y}$. 
Since $y\in Z$ was arbitrary,  we have that $\norm{S_A(i\gw)\inv}\leq M_R(\abs{\gw})$ for some $M_0>0$.
Moreover, our earlier estimate $ \norm{\CcL z}^2 \lesssim \eta(\abs{\gw})\inv \norm{z}\norm{y} $ further implies
\eq{
  \norm{\CcL S_A(i\gw)\inv y}^2 
  &= \norm{\CcL z}^2
  \lesssim \eta(\abs{\gw})\inv \norm{z}\norm{y}\\
  &\lesssim \eta(\abs{\gw})^{-2}\gg(\abs{\gw})^{-2} \gd(\abs{\gw})^{-2} \norm{y}^2,
}
and thus $\norm{\CcL S_A(i\gw)\inv}\lesssim \eta(\abs{\gw})\inv \gg(\abs{\gw})\inv\gd(\abs{\gw})\inv
\leq M_R(\abs{\gw}) $ for some $M_0>0$.

Finally, to estimate $\norm{S_A(i\gw)\inv B_c}$, let $y=0$ and let $u\in Y$ be arbitrary.
Now we have $z=S_A(i\gw)\inv B_c u$, and thus $\norm{\CcL z} = \norm{\CcL S_A(i\gw)B_c u}\lesssim \eta(\abs{\gw})\inv \norm{u}$ due to our earlier estimate. Because of this, we also have
\eq{
  \norm{S_A(i\gw)\inv B_c u}^2 
  = \norm{z}^2
  &\lesssim\gg(\abs{\gw})^{-2}\gd(\abs{\gw})^{-2} 
	\bigl(  \norm{u}^2 +  \norm{\CcL z}^2\bigr)\\
	&\lesssim\gg(\abs{\gw})^{-2}\gd(\abs{\gw})^{-2} 
(1+\eta(\abs{\gw})^{-2})  \norm{u}^2 
}
and thus $\norm{ S_A(i\gw)\inv B_c}\lesssim \eta(\abs{\gw})\inv \gg(\abs{\gw})\inv\gd(\abs{\gw})\inv
\leq M_R(\abs{\gw}) $ for some $M_0>0$.
\end{proof}

    In the case where $X=\set{0}$, $A=0\in \Lin(X)$, $B=0\in \Lin(U,X)$, $C=0 \in \Lin(X,U)$, and $D=I\in \Lin(U)$ the operator $S_A(i\gw)$ reduces to $i\gw-\Ac+\Bc (I+\Dc )\inv \CcL$. 
    This way Theorem~\ref{thm:CLstabnonuniform} can also be used to study the non-uniform stability of semigroups generated by operators of the form 
$\Ac-\Bc\Bc^\ast$ 
and $\Ac-\Bc (I+\Dc )\inv \CcL$. This topic is considered in detail in~\cite{ChiPau19arxiv}.

  \begin{remark}
    \label{rem:CLNUunifgap}
    Assume $\set{\gw_k}_{k\in\SIndset}$ has a uniform gap, i.e., $\inf_{k\neq l}\abs{\gw_k-\gw_l}>0$, and 
    $\tilde{\gg}:\R_+\to (0,1]$ is a continuous non-increasing function such that $\inf_{\gw>0}\tilde{\gg}(\gw+\gd_0)/\tilde{\gg}(\gw)>0$ for some  $0<\gd_0<\min\set{1,\frac{1}{2}\inf_{k\neq l}\abs{\gw_k-\gw_l}}$ (so that $\tilde{\gg}(\cdot)$ does not decrease too rapidly). If 
    $\norm{\Bck^\ast z_k}\geq \tilde{\gg}(\abs{\gw_k}) \norm{z_k}$ for all $k\in\SIndset$ and $z_k\in Z_k$, then there exists a constant $0<c\leq 1$ for which 
     the functions $\gg(\cdot)=c \tilde{\gg}(\cdot)$ and $\gd(\cdot)\equiv \gd_0>0$ 
    are such that 
    $\norm{\Cc z}\geq \gg(\abs{\gw}) \norm{z}$ for every $\gw\in \R$ and every $(\gw,\gd(\abs{\gw}))$-wavepacket $z$ of~$A_c$.
  \end{remark}

\section{The Robust Output Regulation Problem}
\label{sec:RORP}

We will now turn our attention to constructing passive controllers of the form \eqref{eq:contrintro} to achieve robust output tracking and disturbance rejection for a passive regular linear system~\eqref{eq:plantfull}. We assume the reference signal $\yref(t)$ and the disturbance signal $\wdist(t)$ are of the form
\eqn{
  \label{eq:yrefwdist}
  \yref(t) = \sum_{k\in\SIndset} \yrefk e^{i\gw_k t},
  \quad \mbox{and} \quad 
 \wdist(t) = \sum_{k\in\SIndset} \wdistk e^{i\gw_k t},
}
with a given set $\set{\gw_k}_{k\in\SIndset}\subset \R$ of distinct frequencies with no finite accumulation points, and $\set{\yrefk}_{k\in\SIndset}\subset Y$ and $\set{\wdistk}_{k\in\SIndset}\subset U_d$.
We use the notation $\distref(t)=(\wdist(t),\yref(t))^T$ and $\distrefk=(\wdistk,\yrefk)^T$.
We consider $\yref(t)$ and $\wdist(t)$ with both finite and infinite number of frequency components, and these two classes of signals are treated separately.
The latter situation is encountered in 
tracking and rejection of nonsmooth periodic signals~\cite{ImmPoh05b}.
If $\SIndset$ is infinite, we assume
  $(\yrefk)_{k\in\SIndset} \in \lp[1](\SIndset;Y)$ and
  $(\wdistk)_{k\in\SIndset} \in \lp[1](\SIndset;U_d)$, which imply that $\yref(t)$ and $\wdist(t)$ are uniformly continuous \keyterm{almost periodic functions}~\citel{AreBat11book}{Def. 4.5.6}.
In the case of real-valued $\yref(t)$ and $\wdist(t)$ we have $\pm\gw_n\in \set{\gw_k}_{k\in\SIndset}$ for all $n\in\SIndset$.

We make the following standing assumption on the system~\eqref{eq:plantfull}.
Here $P_S(\gl)$ is the transfer function of the system $\PARsysS$ obtained from~\eqref{eq:plantfull} with admissible output feedback $u(t)=-\Dctwo y(t)$ with $\Dctwo\geq 0$.
It should be noted that Assumption~\ref{ass:Pkinv} is satisfied for some $\Dctwo\geq 0$ for which $\set{i\gw_k}_k\subset \rho(A^S)$ if and only if it is satisfied for all $\Dctwo\geq 0$ with this property.
In particular, if $i\gw_k\in\rho(A)$ for some $k\in \SIndset$, then $P_S(i\gw_k)$ is invertible if and only if $P(i\gw_k)$ is invertible. 

\begin{assumption}
  \label{ass:Pkinv} There exists $\Dctwo\geq 0$ such that $i\gw_k\in \rho(A^S)$ and $ P_S(i\gw_k)$ is boundedly invertible for all $k\in\SIndset$.
\end{assumption}

We define the \keyterm{regulation error} as $e(t)=\yref(t)-y(t)$. 
Our aim is to choose $\PARcontr$ in such a way that $e(t)$ converges to zero in a suitable sense as $t\to\infty$.
The closed-loop system consisting of~\eqref{eq:plantfull} and the controller~\eqref{eq:contrintro} with state $x_e(t)=(x(t),z(t))^T$ on $X_e=X\times Z$ is of the form
\begin{subequations}
  \label{eq:CLsys}
  \eqn{
    \dot{x}_e(t)&= A_e x_e(t) + B_e \distref(t), \qquad x_e(0)=x_{e0}=(x_0,z_0)^T\in X_e,\\
  e(t)&=C_ex_e(t) + D_e \distref(t),
  }
\end{subequations}
where $\distref(t)=(\wdist(t),\yref(t))^T$.
If we denote $Q_1 = (I+D\Dc )\inv$ and $Q_2=(I+\Dc D)\inv$, then
$A_e$ and $\Dom(A_e)$ are as in~\eqref{eq:CLsysop}
and
\eq{
B_e &= \pmat{B_d & B\Dc Q_1 \\0&\Bc Q_1}, \quad 
C_e = \pmat{-Q_1\CL& -Q_1D\CcL}, \quad D_e = \pmat{0&Q_1}.
}

The following result shows that the closed-loop system is a regular linear system.  The result also holds whenever $\re\Dc\geq 0$ and $I+D\Dc$ is invertible.

  \begin{lemma}
    \label{lem:CLreg}
    The closed-loop system~\eqref{eq:CLsys} is regular and $A_e$ in~\eqref{eq:CLsysop} generates a contraction semigroup.
  \end{lemma}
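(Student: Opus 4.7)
My plan is to view the power-preserving interconnection as a static output feedback applied to the block-diagonal product system
\begin{equation*}
A_\times = \blkdiag(A,\Ac), \ B_\times = \blkdiag(B,\Bc), \ C_\times = \blkdiag(C,\Cc), \ D_\times = \blkdiag(D,\Dc)
\end{equation*}
on $X_e = X\times Z$ with input $(u,u_c)$ and output $(y,y_c)$. This product is itself an impedance passive regular linear system, and the interconnection $u=y_c$, $u_c=-y$ takes the form $(u,u_c)^T = K(y,y_c)^T$ with the bounded skew operator $K = \pmat{0 & I\\-I & 0}$. Once $K$ is shown to be an admissible output feedback, the standard feedback theorem for regular linear systems~\cite{Wei94,TucWei09book} produces the closed-loop as a regular linear system, and a direct computation identifies its generator with $A_e$ as in Section~\ref{sec:CLstab}.

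The technical core is to verify that $I - KD_\times = \pmat{I & -\Dc\\D & I}$ is boundedly invertible; block elimination reduces this to bounded invertibility of $I + D\Dc$ and $I + \Dc D$, after which the inverse is explicitly $\pmat{Q_2 & Q_2\Dc\\-Q_1 D & Q_1}$. Because $\re D \geq 0$ and $\Dc \geq 0$ need not commute, the direct coercivity estimate $\re\iprod{(I+D\Dc)u}{u}\geq \norm{u}^2$ may fail, so I would detour through the self-adjoint quantity $T := \Dc^{1/2} D \Dc^{1/2}$. Since $\re T = \Dc^{1/2}(\re D)\Dc^{1/2}\geq 0$, the numerical range of $T$ lies in $\overline{\C_+}$, and therefore $-1\notin \gs(T)$. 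The general relation $\gs(AB)\setminus\set{0}=\gs(BA)\setminus\set{0}$, applied with $A=D\Dc^{1/2}$ and $B=\Dc^{1/2}$, then yields $\gs(D\Dc)\setminus\set{0}=\gs(T)\setminus\set{0}$, so $-1\notin\gs(D\Dc)$ and $I+D\Dc$ is boundedly invertible; the same reasoning handles $I+\Dc D$.

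It remains to see that the $C_0$-semigroup $T_e(t)$ generated by $A_e$ is contractive, for which I would verify dissipativity directly using the passivity of the two subsystems. Given $x_e = (x,z)^T \in \Dom(A_e)$, set $u := Q_2 \CcL z - \Dc Q_1 \CL x$ and $u_c := -Q_1 \CL x - DQ_2 \CcL z$; a direct computation shows $A_e x_e = (Ax+Bu,\, \Ac z + \Bc u_c)^T$ together with the interconnection consistency $y := \CL x + Du = -u_c$ and $y_c := \CcL z + \Dc u_c = u$. Impedance passivity of the plant and the controller then give
\begin{equation*}
\re\iprod{A_e x_e}{x_e} \leq \re\iprod{y}{u} + \re\iprod{y_c}{u_c} = \re\iprod{-u_c}{u} + \re\iprod{u}{u_c} = 0,
\end{equation*}
which implies $\norm{T_e(t)}\leq 1$ for all $t\geq 0$. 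The main obstacle is the invertibility of $I + D\Dc$ under only the sign conditions $\re D\geq 0$ and $\Dc\geq 0$; once this is in hand, the remainder is bookkeeping with the feedback theorem and the passivity inequalities.
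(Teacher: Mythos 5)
Your proposal is correct and follows the same overall route as the paper: realize the interconnection as a static output feedback $K$ applied to the block-diagonal product system, obtain regularity from Weiss's feedback theorem once the relevant static operator is invertible, and obtain contractivity from Lumer--Phillips together with the two passivity inequalities. Your dissipativity computation with the auxiliary signals $u$, $u_c$ and the consistency relations $y=-u_c$, $y_c=u$ is precisely the paper's computation in cleaner notation (using $\Dc Q_1=Q_2\Dc$ and $DQ_2=Q_1D$). The one place you genuinely diverge is the invertibility of $I+D\Dc$: the paper's Lemma~\ref{lem:Repostoinv}(d) argues by contradiction that $I+\Dc D$ is bounded below (if $\norm{(I+\Dc D)u_n}\to0$ with $\norm{u_n}=1$, pairing with $Du_n$ forces $\norm{\Dc Du_n}\to0$, which is absurd), while you symmetrize to $T=\Dc^{1/2}D\Dc^{1/2}$, use $\re T\geq 0$ to place $\gs(T)$ in $\overline{\C_+}$ via the numerical range, and transfer this to $\gs(D\Dc)$ through $\gs(AB)\setminus\set{0}=\gs(BA)\setminus\set{0}$. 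Both arguments are valid; yours is spectrally more transparent, while the paper's sequence argument is the template it reuses to get the \emph{uniform} bounds $\sup_\gl\norm{(I+\Dc P(\gl))\inv}<\infty$ in Lemma~\ref{lem:IDPinvert}, which a pure spectral argument does not give. Two small corrections: $T=\Dc^{1/2}D\Dc^{1/2}$ is not self-adjoint in general (only $\re T\geq 0$, which is all your argument actually uses); and your product system omits the disturbance and reference channels, so to obtain regularity of the full closed-loop system~\eqref{eq:CLsys} with input $\distref$ you should, as the paper does, take the open-loop input operator $\pmatsmall{B&B_d&0\\0&0&\Bc}$ and the feedback $\hat K=\pmatsmall{0&I\\0&0\\-I&0}$ --- a purely bookkeeping change.
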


\begin{proof}
  Consider the regular linear system
  \eq{
  \left( \pmat{A&0\\0&\Ac }, \pmat{B&B_d&0\\0&0&\Bc }, \pmat{\CL&0\\0&\CcL}, \pmat{D&0&0\\0&0&\Dc } \right).
  }
  The closed-loop system~\eqref{eq:CLsys} is obtained from the above system with output feedback with 
    $\hat{K}=\pmatsmall{0&I\\0&0\\-I&0}$,
  which is an admissible feedback operator since
  $I+D\Dc$ is boundedly invertible by Lemma~\ref{lem:Repostoinv}(d). 
Thus~\eqref{eq:CLsys} is regular~\cite{Wei94}.  
   
  Since $A_e$ generates a semigroup $T_e(t)$ on $X_e$, the Lumer--Phillips Theorem implies that $T_e(t)$ is contactive if $A_e$ is dissipative. The estimates $\re \iprod{Ax+Bu}{x}\leq \re \iprod{\CL x+Du}{u}$ and $\re \iprod{\Ac  z + \Bc y}{z}\leq \re \iprod{\CcL z+\Dc  y}{y}$ and a direct computation show that for any $x_e = (x,z)^T\in \Dom(A_e)$ we have
\eq{
  \re \iprod{A_ex_e}{x_e}
&= \re \iprod{Ax + BQ_2(-\Dc \CL x+\CcL z)}{x} \\
&\quad + \re \iprod{\Ac  z + \Bc  Q_1(-\CL x - D \CcL z)}{z}\\
&\leq 
 \re \iprod{\CL x + DQ_2(-\Dc \CL x+\CcL z)}{Q_2(-\Dc \CL x+\CcL z)} \\
 &\quad + \re \iprod{\CcL  z + \Dc  Q_1(-\CL x - D \CcL z)}{ Q_1(-\CL x - D \CcL z)}\\
 &=0,
}
and thus $A_e$ is dissipative.
\end{proof}

In the following we define the robust output regulation problem for the regular linear system~\eqref{eq:plantfull}. In the problem we consider perturbations for which the perturbed system $(\tilde{A},[\tilde{B},\tilde{B}_d],\tilde{C}_\Lambda,\tilde{D})$ and the perturbed closed-loop system remain regular.
The robustness of the controller also implies that output tracking and disturbance rejection are achieved even if the operators $\Bc$, $\Cc$ and $\Dc$ of the controller are perturbed or approximated in such a way that the closed-loop stability is preserved and the additional conditions on the perturbations stated in Section~\ref{sec:ContrConstructions} are satisfied.

\begin{RORP}
  Choose  $\PARcontr$ in such a way that the following are satisfied:
\begin{itemize}
  \setlength{\itemsep}{.5ex}
  \item[\textup{(a)}] The semigroup $T_e(t)$ generated by
    $A_e$ is strongly stable.
  \item[\textup{(b)}] 
  For the 
  reference and disturbance signals of the form~\eqref{eq:yrefwdist} and for all
  initial states $x_{e0}\in X_e$
  the regulation error satisfies
\eqn{
\label{eq:errintconv}
\int_t^{t+1} \norm{e(s)}ds\to 0 \qquad \mbox{as} \quad t\to \infty.
}
\item[\textup{(c)}] If
  $(A,B,B_d,\CL,D)$ are perturbed to $(\tilde{A},\tilde{B},\tilde{B}_d,\tilde{C}_\Lambda,\tilde{D})$ 
  in such a way that the perturbed closed-loop system is strongly stable, 
  then for the signals~\eqref{eq:yrefwdist} and for all 
initial states $x_{e0}\in X_e$ 
the regulation error satisfies~\eqref{eq:errintconv}.
\end{itemize}
\end{RORP}

  It follows from the results in~\cite[Sec. 3]{Pau17b} that if the closed-loop system is exponentially stable, then
  convergence in~\eqref{eq:errintconv} is uniformly exponentially fast, i.e.,
  there exist $M_e,\ga>0$ such that 
    $\int_t^{t+1} \norm{e(s)} ds\leq M_e e^{-\ga t}(\norm{x_{e0}}+1)$ for all $x_{e0}\in X_e$.
  If the input and output operators of the system and the controller are bounded, then the error convergences pointwise,   
  i.e.,  $\norm{y(t)-\yref(t)}\to 0$ as $t\to \infty$, and the rate is exponential if $T_e(t)$ is exponentially stable.

\section{Passive Controllers for Robust Output Regulation}
\label{sec:ContrConstructions}

The controller constructions in this section are based on the internal model principle~\cite{FraWon75a,PauPoh10,PauPoh14a} which implies that a controller solves the robust output regulation problem provided that its dynamics contain a suitable number of copies of the frequencies $\set{\gw_k}_{k\in\SIndset}$ of the signals~\eqref{eq:yrefwdist} and the closed-loop system is stable.
If $\dim Y<\infty$, then $\PARcontr$ contains an internal model
of the signals~\eqref{eq:yrefwdist} if~\cite[Thm. 13]{Pau17b}
\eq{
  \dim\ker(i\gw_k-\Ac)\geq \dim Y \qquad \forall k\in\SIndset.
}
In the case of an infinite-dimensional output space, the controller contains an internal model if~\cite[Thm. 13]{Pau17b}
\begin{subequations}
    \label{eq:Gconds}
    \eqn{
      \ran(i\gw_k-\Ac )\cap\ran(\Bc )&=\set{0} ~\quad\qquad \forall k\in \SIndset , \label{eq:Gconds1} 
      \\
      \ker(\Bc )&=\set{0}. \label{eq:Gconds2}
    }
  \end{subequations} 

We consider three different situations: 
   In Section~\ref{sec:ContrFinExo} we construct a finite-di\-men\-sional robust controller for a strongly stabilizable system~\eqref{eq:plantfull}. If  $\PARsys$ is exponentially stabilizable, then the convergence of the error is exponentially fast.
   In Section~\ref{sec:Contrtransport} we design a robust controller to track and reject nonsmooth $\tau$-periodic reference signals. The controller is based on a periodic transport equation, and achieves exponential closed-loop stability if the system~\eqref{eq:plantfull} is exponentially stabilizable and satisfies $\re P(i \gw)\geq \eta>0$ for some constant $\eta>0$ near the points $\gw_k=\frac{2\pi k}{\tau}$ for $k\in\Z$.
   In Section~\ref{sec:ContrInfExoDiag} we design an infinite-dimensional robust controller for nonsmooth signals~\eqref{eq:yrefwdist} with a general set of frequencies $\set{\gw_k}_{k\in\SIndset}$. In general, the closed-loop system can not be stabilized exponentially, and we introduce conditions for non-uniform subexponential rates of convergence of the output.

In the constructions we choose the feedthrough of the controller to have the form $\Dc=\Dcone+\Dctwo$, where $\Dctwo\geq 0$ is used to pre-stabilize the system $\PARsys$. We assume that the system $\PARsysS=(A-B\Dctwo Q_1^S \CL,BQ_2^S,Q_1^S\CL,Q_1^SD)$ where
$Q_1^S=(I+D\Dctwo)\inv$ and $Q_2^S=(I+\Dctwo D)\inv$ 
obtained from~\eqref{eq:plantfull} with the output feedback $u(t)=-\Dctwo y(t)$ is either strongly or exponentially stable. Its transfer function is denoted by $P_S(\gl)$.
The passivity of $\PARsys$ implies that also $\PARsysS$ is passive.

\subsection{A Robust Finite-Dimensional Controller}
\label{sec:ContrFinExo}

In this section we assume the signals~\eqref{eq:yrefwdist} contain a finite number of frequencies $\set{\gw_k}_{k=1}^q$, i.e., $\SIndset=\List{q}$.
The controller parameters are chosen in the following way.

\begin{definition}
  \label{def:contrfindim}
  Choose $Z = Y^q$ and
  \eq{
  \Ac  = \diag \left( i\gw_1 I_Y, \ldots, i\gw_q I_Y \right)\in \Lin(Z) ,
  }
  where $I_Y$ is the identity operator on $Y$. Choose $\Cc\in \Lin(Z,Y)$
  of the form
$\Cc z=\sum_{k=1}^q\Cck z_k$ for $z=(z_k)_{k=1}^q\in Z$
so that $\Cck\in \Lin(Y)$
are boundedly invertible for all $k$,
choose  $\Bc = \Cc^\ast $, 
  and choose $\Dc=\Dcone + \Dctwo$ with $\Dcone>0$.
  Finally, choose $\Dctwo\geq 0$ in such a way that $\PARsysS$ is passive and strongly stable with $i\R\subset \rho(A^S)$.
\end{definition}

In the case where $Y$ and $U_d$ are real spaces and $\wdist(\cdot)$ and $\yref(\cdot)$ real-valued functions 
we have
$\set{\gw_k}_{k=1}^q=\set{0,\pm \gw_1,\ldots,\pm \gw_{q'}}$ or $\set{\gw_k}_{k=1}^q=\set{\pm \gw_1,\ldots,\pm \gw_{q'}}$ for some $\gw_1,\ldots,\gw_{q'}>0$. In this case the controller can be chosen to be real by choosing ($J_0$ is omitted if $0\notin \set{\gw_k}_{k=1}^q$)
\eq{
\Ac  = \diag \left(J_0,  J_1,\ldots,J_{q'} \right), \quad 
J_0 = 0\in \Lin(Y), \quad
J_k = \pmat{0&\gw_k I_Y\\-\gw_k I_Y&0}, 
}
and $\Cc = \Cck[0]z_0+\sum_{k=1}^{q'}\Cck z_k^1$ for $z=(z_0,z_1^1,z_1^2,\ldots,z_{q'}^1,z_{q'}^2)\in Z=Y^{2q'+1}$ where $\Cck \in \Lin(Y)$ are boundedly invertible for $0\leq k\leq q'$, $\Bc =\Cc^\ast$, and $\Dc>0$ is as in Definition~\ref{def:contrfindim}.
This controller is passive and it will achieve robust output regulation by Theorem~\ref{thm:ContrConstFin} due to the fact that under the similarity transform
\eq{
V = \diag(I_Y,V_1,\ldots,V_{q'}), \qquad V_k = \frac{1}{\sqrt{2}} \pmat{I_Y&I_Y\\iI_Y&-iI_Y}
}
the system $(V^\ast \Ac V,V^\ast \Bc,\Cc V,\Dc)$ is of the form given in Definition~\ref{def:contrfindim}.

\begin{theorem}
  \label{thm:ContrConstFin}
  The controller in Definition~\textup{\ref{def:contrfindim}} solves the robust output regulation problem. The closed-loop system is strongly stable and $i\R\subset \rho(A_e)$.

  If $\PARsysS$ is exponentially stable, then also the closed-loop system is exponentially stable and 
  for any $\yref(t)$ and $\wdist(t)$ 
  there exist $M_e,\ga>0$ such that 
  \eq{
    \int_t^{t+1} \norm{e(s)} ds\leq M_e e^{-\ga t}(\norm{x_{e0}}+1) \qquad \forall x_{e0}\in X_e.
  }

  In both cases the controller is robust with respect to all perturbations that preserve the stability of the closed-loop system and for which $i\R\subset \rho(\tilde{A}_e)$.
\end{theorem}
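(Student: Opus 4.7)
First I would verify that $\PARcontr$ is passive and contains an internal model of the frequencies in~\eqref{eq:yrefwdist}. Passivity is immediate: $\Ac = \diag(i\gw_k I_Y)$ is skew-adjoint, $\Bc = \Cc^\ast$, and $\Dc > 0$, so the passivity inequality follows by inspection. For the internal model, the block-diagonal form of $\Ac$ gives $\ker(i\gw_k - \Ac) = \set{0}\times\cdots\times Y\times\cdots\times\set{0}$ (the $k$th slot), and bounded invertibility of each $\Cck$ makes the $k$th component $(\Cck)^\ast y$ of $\Bc y$ vanish only at $y = 0$; this yields $\ker(\Bc) = \set{0}$ and $\ran(i\gw_k-\Ac)\cap\ran(\Bc) = \set{0}$ for every $k$ (and $\dim\ker(i\gw_k-\Ac) = \dim Y$ in the finite-dimensional case). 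By the internal model principle~\cite[Thm.~13]{Pau17b}, once strong (or exponential) closed-loop stability with $i\R \subset \rho(A_e)$ is in hand, both the error convergence~\eqref{eq:errintconv} and its robustness under stability-preserving perturbations follow automatically, with the exponential bound on $\int_t^{t+1}\norm{e(s)}\,ds$ in the exponentially stable case inherited from the discussion following the problem statement.

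\textbf{Paragraph 2.} For strong stability and $i\R \subset \rho(A_e)$, I would use Remark~\ref{rem:Dcrole} to reformulate $A_e$ in terms of the pre-stabilized plant $(A^S,B^S,C^S,D^S)$ and residual controller feedthrough $\Dcone$, then invoke Theorem~\ref{thm:CLstabstr}. Condition~(1) on $\gs(\Ac) \cap i\R = \set{i\gw_k}_{k=1}^q$ is built into the construction. The required bounded invertibility of the Schur complement $S_A(i\gw_k) = i\gw_k - \Ac + \Bc P_{cl}(i\gw_k)\Cc$, with $P_{cl} = P_S(I+\Dcone P_S)\inv$, follows from the identity $P_{cl}(i\gw_k)\inv = P_S(i\gw_k)\inv + \Dcone$, which gives invertibility of $P_{cl}(i\gw_k)$ via the strict positivity $\Dcone > 0$ and Lemma~\ref{lem:Repostoinv}, combined with a direct block-wise computation exploiting the diagonal structure of $\Ac$ and the invertibility of $\Cck$. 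Condition~(2) is vacuous: since $\Cc R(i\gw,\Ac)\Bc = \sum_k (i(\gw-\gw_k))\inv \Cck(\Cck)^\ast$ is skew-adjoint away from the frequencies, $\re G(i\gw) = \Dc \geq \Dcone > 0$ on $\R \setminus \set{\gw_k}$. Condition~(3) is verified by a dissipation argument analogous to the closing step of the proof of Theorem~\ref{thm:CLstabnonuniform}: for $\re D_0 > 0$, any approximate eigenvector of $\Ac - \Bc D_0(I+\Dcone D_0)\inv \Cc$ at $i\gw_k$ would be asymptotically in $\ker(i\gw_k - \Ac)$, contradicting the lower bound imposed by the invertibility of $\Cck$.

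\textbf{Paragraph 3.} Under the additional hypothesis that $(A^S,B^S,C^S,D^S)$ is exponentially stable, I would apply the exponential criterion at the end of Theorem~\ref{thm:CLstabstr}: $\SIndset$ is finite, the pre-stabilized plant is exponentially stable, $\Ac$ has only finitely many points on $i\R$ so $\sup_{\abs{\gw}\geq \gwlim}\norm{R(i\gw,\Ac)} < \infty$ for a suitable $\gwlim$, and the uniform lower bound $\re G(i\gw) \geq \Dcone$ yields $\dc(\gw) \geq \inf \gs(\Dcone) > 0$, so the alternative $\gg(\gw) + \dc(\gw) \geq \gg_0 > 0$ is satisfied. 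Theorem~\ref{thm:CLstabstr} then gives exponential closed-loop stability, and the exponential error estimate follows from the discussion after the robust output regulation problem statement. Robustness in both regimes is inherited from the internal model principle. The main obstacle I anticipate is the verification of Condition~(1) of Theorem~\ref{thm:CLstabstr}: the strict positivity $\re P_S(i\gw_k) > 0$ required there is not directly available, and the argument must be adapted to exploit the strict positivity of $\Dcone$ together with the invertibility of each $\Cck$ to obtain bounded invertibility of $S_A(i\gw_k)$ by a direct computation, rather than applying the theorem as a black box.
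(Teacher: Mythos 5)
Your proposal is correct and follows essentially the same route as the paper: verify the internal model conditions~\eqref{eq:Gconds} from the invertibility of the $\Cck$, apply Theorem~\ref{thm:CLstabstr} to the pair $(A^S,B^S,C^S,D^S)$ and $(\Ac,\Bc,\Cc,\Dcone)$ (condition~(2) from $\re \G(i\gw)=\Dcone>0$, condition~(3) via the dissipation argument closing the proof of Theorem~\ref{thm:CLstabnonuniform}, and the exponential case from the final part of Theorem~\ref{thm:CLstabstr}), and conclude the regulation and robustness claims from \cite[Thm.~13]{Pau17b}. Your closing observation is well taken: the paper does not explicitly verify $\re P_S(i\gw_k)>0$ either, and your fallback --- obtaining invertibility of $S_A(i\gw_k)$ directly from the invertibility of $P_{cl}(i\gw_k)=P_S(i\gw_k)(I+\Dcone P_S(i\gw_k))\inv$ and of $\Cck$ via the decomposition $Z=\ker(i\gw_k-\Ac)\oplus\ker(i\gw_k-\Ac)^\perp$ --- is exactly the block-Woodbury computation the paper uses in the proof of Theorem~\ref{thm:ContrConstInf}, so it soundly closes that point.
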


\begin{proof}
  The controller $(\Ac ,\Bc,\Cc,\Dcone)$ is passive and its transfer function $\G(\gl)$ satisfies $\re \G(i\gw)=\Dcone>0$ for all $\gw\in\R \setminus \set{\gw_k}_{k=1}^q$.
The operators $(\Ac ,\Bc)$  satisfy~\eqref{eq:Gconds}.
Indeed, the injectivity of $\Bc$ in~\eqref{eq:Gconds2} follows directly from the fact that the components $\Cck^\ast$ of $\Bc$ are boundedly invertible by assumption. Condition~\eqref{eq:Gconds1} can be verified using the diagonal structure of $\Ac $ and the invertibility of $\Cck^\ast$.

 To prove closed-loop stability, we apply Theorem~\ref{thm:CLstabstr} to $\PARsysS$ and $(\Ac,\Bc,\Cc,\Dcone)$.
Condition~(2) of the theorem is satisfied since for any $\gw\in \R\setminus \set{\gw_k}_{k=1}^q$ we have $\re G(i\gw)= \re(\Cc R(i\gw,\Ac )\Bc +\Dcone) = \Dcone>0$, and
condition~(3)
is satisfied by Lemma~\ref{lem:CLstabstrongsuff} since $\Cck$ are invertible.
Thus the strong and exponential closed-loop stabilities follow from Theorem~\ref{thm:CLstabstr}.
Finally, the conclusion that the controller solves the robust output regulation problem follows from~\cite[Thm. 13]{Pau17b}. The results in~\cite{Pau17b} are presented for controllers with $\Dc=0$, but they are applicable since $\Dc\geq 0$ can be written as an output feedback for the system~\eqref{eq:plantfull} without changing the properties of the closed-loop system. 
    Moreover, the results are presented for an infinite set $\set{\gw_k}_{k\in\SIndset}$, but they also apply trivially when $\SIndset$ is finite.
\end{proof}

\begin{proposition}
  \label{prop:ContrFinPointwise}
   The regulation error in Theorem~\textup{\ref{thm:ContrConstFin}} converges pointwise, i.e., $\norm{e(t)}\to 0$ as $t\to \infty$,
for all initial states $x_{e0}\in X_e$ satisfying $A_ex_{e0}+B_e\distref(0)\in X_e$.  
  If the closed-loop system is exponentially stable, 
then for all $\yref(t)$ and $\wdist(t)$ 
  there exist $M_e,\ga>0$ such that
  \eq{
    \norm{e(t)}\leq M_ee^{-\ga t}(\norm{A_ex_{e0}+B_e\distref(0)}+1)
  }
  for all $x_{e0}\in X_e$ satisfying $A_ex_{e0}+B_e\distref(0)\in X_e$.
\end{proposition}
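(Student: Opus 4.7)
The plan is to translate the error dynamics into the free evolution of $T_e(t)$ via the regulator equations, and then exploit the regularity of $C_{e\Lambda}$ on $\Dom(A_e)$.

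Since by Theorem~\ref{thm:ContrConstFin} the controller solves the robust output regulation problem and satisfies $i\R\subset\rho(A_e)$, the internal model principle (cf.~\citel{Pau17b}{Thm.~13}) provides an operator $\Pi\in\Lin(W,X_e)$ satisfying the regulator equations
\eq{
\Pi S = A_e\Pi + B_eG, \qquad C_{e\Lambda}\Pi + D_eG = 0,
}
where $(S,G)$ is the finite-dimensional exosystem giving $\distref(t)=Ge^{St}\hat w_0$ with $\gs(S)=\set{i\gw_k}_{k=1}^q$. That $\Pi$ maps into $X_e$, rather than only into the extrapolation space, follows from $\gs(S)\cap\gs(A_e)=\emptyset$ together with the regularity of the closed-loop system from Lemma~\ref{lem:CLreg}, since each column takes the form $\Pi e_k=R(i\gw_k,A_e)B_eGe_k\in X_e$.

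I would then set $\tilde x_e(t):=x_e(t)-\Pi e^{St}\hat w_0$. The regulator identities immediately yield $\dot{\tilde x}_e=A_e\tilde x_e$ and $e(t)=C_{e\Lambda}\tilde x_e(t)=C_{e\Lambda}T_e(t)\tilde x_{e0}$, while the identity
\eq{
A_e\tilde x_{e0}=\bigl(A_ex_{e0}+B_e\distref(0)\bigr)-\Pi S\hat w_0
}
(interpreted in the extrapolation space) shows that the compatibility hypothesis is equivalent to $\tilde x_{e0}\in\Dom(A_e)$, because $\Pi S\hat w_0\in X_e$. For such $\tilde x_{e0}$, the semigroup invariance of the domain gives $T_e(t)\tilde x_{e0}\in\Dom(A_e)$ with $A_eT_e(t)\tilde x_{e0}=T_e(t)A_e\tilde x_{e0}$. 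The regularity of the closed-loop system makes $C_{e\Lambda}$ bounded from $\Dom(A_e)$, with its graph norm, into $Y$, so
\eq{
\norm{e(t)}\leq M\bigl(\norm{T_e(t)\tilde x_{e0}}+\norm{T_e(t)A_e\tilde x_{e0}}\bigr).
}

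Under the strong stability conclusion of Theorem~\ref{thm:ContrConstFin} both terms on the right tend to zero, yielding $\norm{e(t)}\to 0$. In the exponentially stable case one has $\norm{T_e(t)y}\leq Ke^{-\ga t}\norm{y}$ for every $y\in X_e$, so the right-hand side is bounded by $MKe^{-\ga t}(\norm{\tilde x_{e0}}+\norm{A_e\tilde x_{e0}})$. Exponential stability furthermore gives $0\in\rho(A_e)$, hence $\norm{\tilde x_{e0}}\leq \norm{A_e\inv}\norm{A_e\tilde x_{e0}}$, and by the identity above $\norm{A_e\tilde x_{e0}}\leq \norm{A_ex_{e0}+B_e\distref(0)}+\norm{\Pi S\hat w_0}$. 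Since $\norm{\Pi S\hat w_0}$ depends only on the prescribed reference and disturbance, it is absorbed into the additive ``$+1$'' of the claimed bound. I expect the only subtle point to be the correct extrapolated reading of $A_ex_{e0}+B_e\distref(0)$ together with the verification $\Pi\in\Lin(W,X_e)$, but both follow routinely from the regular-linear-system framework already in use in this section.
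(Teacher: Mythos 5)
Your proposal is correct and follows essentially the same route as the paper: your $\tilde x_{e0}=x_{e0}-\Pi\hat w_0$ with $\Pi\hat w_0=\sum_{k}R(i\gw_k,A_e)B_e\distrefk$ and $\Pi S\hat w_0=\qext$ is exactly the element $A_e\inv(A_ex_{e0}+B_e\distref(0)-\qext)$ appearing in Lemma~\ref{lem:RegErrForm}, and your graph-norm estimate for $\CeL$ coincides with the paper's bound $\norm{e(t)}\leq \norm{\CeL A_e\inv}\,\norm{T_e(t)}\,\norm{A_ex_{e0}+B_e\distref(0)-\qext}$. The only difference is presentational: you re-derive the error formula from the regulator equations for the finite-dimensional exosystem, whereas the paper simply invokes Lemma~\ref{lem:RegErrForm}, whose hypotheses are automatically satisfied because $\SIndset$ is finite.
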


The proof of Proposition~\ref{prop:ContrFinPointwise} is based on the following technical lemma,
which is also used later in the following sections.
The assumptions on $H$ are automatically satisfied if $\SIndset$ is finite, or if the closed-loop system is exponentially stable. In the latter case the property $Hv\in \Dom(\CeL)$ can be verified similarly as in the proof of Theorem~\ref{thm:ContrConstInf}.

\begin{lemma}
  \label{lem:RegErrForm}
  Assume the controller solves the robust output regulation problem
and $\yref(t)$ and $\wdist(t)$ are such that
for some fixed $(f_k)_k\in \lp[2](\C)$ the operator $H: \Dom(H)\subset \lp[2](\C)\to X_e$ defined by
\eq{
  Hv = \sum_{k\in\SIndset} f_k\inv R(i\gw_k,A_e)B_e\distrefk v_k, \qquad  v=(v_k)_k
}
satisfies $H\in \Lin(\lp[2](\C),X_e)$ and $Hv \in\Dom(\CeL)$ for all $v\in \lp[2](\C)$.
If $\yref(t)$ and $\wdist(t)$ are such that the series 
\eqn{
  \label{eq:qextdef}
  \qext = \sum_{k\in \SIndset} i\gw_k R(i\gw_k,A_e)B_e \distrefk.
}
converges in $X_e$, 
then for all $x_{e0}\in X_e$ satisfying $A_e x_{e0}+B_e\distref(0)\in X_e$ and for almost all $t>0$
we have
\eq{
  e(t) = \CeL T_e(t)A_e\inv (A_ex_{e0}+B_e\distref(0)-\qext).
}
\end{lemma}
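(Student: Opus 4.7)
The plan is to construct an explicit \emph{steady-state} particular solution $\Pitrans(t)$ of the closed-loop equation and reduce the formula for $e(t)$ to a pure semigroup statement on the difference $x_e(t)-\Pitrans(t)$. To begin, I would set $\Pitransk=R(i\gw_k,A_e)B_e\distrefk\in X_e$ and $\Pitrans(t)=\sum_{k\in\SIndset}\Pitransk e^{i\gw_k t}$. Writing $v(t)=(f_k e^{i\gw_k t})_{k\in\SIndset}$, which lies in $\lp[2](\C)$ since $(f_k)_k\in\lp[2](\C)$ and $\abs{e^{i\gw_k t}}=1$, the hypothesis on $H$ gives $\Pitrans(t)=Hv(t)$. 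Thus the series converges in $X_e$ uniformly in $t$, and $\Pitrans(t)\in\Dom(\CeL)$ for every $t$.

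Second, I would invoke the internal model principle: since the controller solves the RORP, the output-regulation identity $\CeL\Pitransk+D_e\distrefk=0$ in $Y$ holds for every $k\in\SIndset$ (this is proved from the internal model property as in~\cite{Pau17b}). Combined with the tautological resolvent identity $A_e\Pitransk=i\gw_k\Pitransk-B_e\distrefk$ in $X_{e,-1}$, these are the two regulator equations I will use. Since the closed operator $\CeL H$ is everywhere defined on $\lp[2](\C)$, the closed graph theorem gives $\CeL H\in\Lin(\lp[2](\C),Y)$, so applying $\CeL$ termwise to $\Pitrans(t)=Hv(t)$ yields
\begin{equation*}
\CeL\Pitrans(t) = -D_e\distref(t).
\end{equation*}

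Third, summing the resolvent identity over $k$ and using the convergence of $\qext$ in $X_e$ gives, in $X_{e,-1}$, the formula $A_e\Pitrans(0)=\qext-B_e\distref(0)$. Since $i\R\subset\rho(A_e)$, the extension $A_e\colon X_e\to X_{e,-1}$ is boundedly invertible, and the hypothesis $A_ex_{e0}+B_e\distref(0)\in X_e$ ensures $A_ex_{e0}+B_e\distref(0)-\qext\in X_e$, so
\begin{equation*}
x_{e0}-\Pitrans(0) = A_e\inv\bigl(A_ex_{e0}+B_e\distref(0)-\qext\bigr)\in\Dom(A_e).
\end{equation*}
A termwise computation using the admissibility of $B_e$ shows that $\Pitrans(t)$ is the mild solution of the closed-loop equation with initial value $\Pitrans(0)$, so $\tilde{x}_e(t):=x_e(t)-\Pitrans(t)$ is a classical solution of the homogeneous problem and $\tilde{x}_e(t)=T_e(t)(x_{e0}-\Pitrans(0))$. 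The closed-loop output relation then collapses, for almost every $t>0$, to $e(t)=\CeL x_e(t)+D_e\distref(t)=\CeL\tilde{x}_e(t)+\bigl(\CeL\Pitrans(t)+D_e\distref(t)\bigr)=\CeL\tilde{x}_e(t)$, which is the stated identity.

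The main obstacle will be making the two termwise operations rigorous, namely $\CeL\Pitrans(t)=\sum_k\CeL\Pitransk\,e^{i\gw_k t}$ and the summation of the resolvent identity producing $A_e\Pitrans(0)$, because both $\CeL$ and the extension of $A_e$ to $X_{e,-1}$ are unbounded. The hypothesis on $H$ is engineered exactly for this: it provides a bounded factorisation of the series through $\lp[2](\C)$ that is compatible with $\CeL$ via the closed graph theorem, and the convergence of $\qext$ handles the $A_e$-identity.
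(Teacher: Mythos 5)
Your overall strategy is the same as the paper's: the paper obtains the intermediate formula $e(t)=\CeL T_e(t)\bigl(x_{e0}-\sum_k R(i\gw_k,A_e)B_e\distrefk\bigr)$ by citing the properties of $H$ together with \cite{Pau17b}, and then performs exactly your final step, namely summing the identity $A_e\Pitransk=i\gw_k\Pitransk-B_e\distrefk$ and using $\qext\in X_e$ to rewrite $x_{e0}-\Pitrans(0)$ as $A_e\inv(A_ex_{e0}+B_e\distref(0)-\qext)$. Your contribution is to unpack the part the paper delegates to \cite{Pau17b}, and most of that unpacking (the steady-state trajectory $\Pitrans(t)=Hv(t)$, the regulator equations, the subtraction argument) is sound.

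There is, however, one genuine gap in the detail you chose to fill in: the closed graph theorem does not give $\CeL H\in\Lin(\lp[2](\C),Y)$. For that argument you would need $\CeL H$ (hence, in effect, $\CeL$ along $\ran(H)$) to be closed, and the $\Lambda$-extension of an admissible observation operator is \emph{not} a closed operator in general --- indeed there are admissible $C$ with $\Dom(\CL)=X$ but $\CL$ unbounded, which is incompatible with closedness. So the hypothesis ``$Hv\in\Dom(\CeL)$ for all $v$'' does not by itself license the termwise identity $\CeL Hv=\sum_k f_k\inv\CeL\Pitransk v_k$, which is what you need to conclude $\CeL\Pitrans(t)=-D_e\distref(t)$. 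The way this is actually handled (see the verification of $\ran(H)\subset\Dom(\CeL)$ in the proof of Theorem~\ref{thm:ContrConstInf}) is a direct computation: one evaluates $\gl\CeL R(\gl,A_e)Hv$ via the resolvent identity, splits it into $\sum_k\frac{\gl f_k\inv v_k}{\gl-i\gw_k}\Pez(i\gw_k)\distrefk-\Pez(\gl)\sum_k\frac{\gl f_k\inv v_k}{\gl-i\gw_k}\distrefk$, and passes to the limit $\gl\to\infty$ using the regularity of the closed-loop system and the $\lp[1]$-summability of the relevant coefficient sequences. In other words, the termwise action of $\CeL$ must be built into the verification of the hypothesis on $H$ for each concrete controller; it is not a formal consequence of boundedness of $H$ plus everywhere-definedness of $\CeL H$. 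A minor additional remark: you invoke $i\R\subset\rho(A_e)$ and the invertibility of $A_e$, which are not literally part of the lemma's hypotheses but hold for all controllers to which the lemma is applied; this matches the paper's implicit usage and is not a real objection.
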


\begin{proof}
  It follows from the properties of $H$ and the results in~\cite{Pau17b} that for every $x_{e0}\in X_e$ and almost all $t>0$ the regulation error is given by 
  \eq{
    e(t)=\CeL T_e(t)\Bigl(x_{e0}-\sum_{k\in\SIndset} R(i\gw_k,A_e)B_e\distrefk\Bigr).
  }
  If 
  $A_ex_{e0}+B_e\distref(0)\in X_e$, then 
  a direct computation 
  and $\qext\in X_e$
  show
  \eq{
  \MoveEqLeft A_e \sum_{k\in\SIndset} R(i\gw_k,A_e)B_e\distrefk
    = \sum_{k\in\SIndset} i\gw_k R(i\gw_k,A_e)B_e \distrefk -B_e \distref(0),
  }
  which implies the claim.
 \end{proof}

 \begin{proof}[Proof of Proposition~\textup{\ref{prop:ContrFinPointwise}}]
   Since $\SIndset$ is finite, the conditions of Lemma~\ref{lem:RegErrForm} are satisfied. If $x_{e0}\in X_e$ is such that $A_ex_{e0}+B_e\distref(0)\in X_e$, then the estimate
   \ieq{
     \norm{e(t)}\leq \norm{\CeL A_e\inv} \norm{T_e(t)} \norm{A_ex_{e0}+B_e\distref(0)-\qext}
   }
   implies both claims of the proposition.
 \end{proof}

   The following sufficient condition for $A_ex_{e0}+B_e\distref(0)\in X_e$ follows directly from the structures of $A_e$ and $B_e$. 
   Later in Section~\ref{sec:ContrNonuniform} the same condition implies a non-uniform decay rate for the regulation error.

\begin{lemma}
  \label{lem:ContrAeBedomSuff}
  If $\Bc\in \Lin(U,X)$, $\Cc\in \Lin(X,Y)$, and
  $\wdist(0)=0$, then $A_ex_{e0}+B_e\distref(0)\in X_e$ is satisfied for $x_{e0}=(x_0,z_0)^T\in \Dom(A)\times \Dom(\Ac)$ if $\Cc z_0 = \Dc(Cx_0-\yref(0))$.
\end{lemma}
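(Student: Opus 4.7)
The plan is to verify the inclusion by direct componentwise calculation using the explicit block form of $A_e$ and $B_e$. The boundedness hypotheses make the computation essentially algebraic: since $\Cc$ is bounded we have $\ZG=Z$ and $\CcL z_0=\Cc z_0$ for every $z_0\in\Dom(\Ac)$, and since $x_0\in\Dom(A)$ we have $\CL x_0=Cx_0$. Moreover, with $\wdist(0)=0$ the disturbance contribution reduces to $B_e\distref(0)=(B\Dc Q_1\yref(0),\,\Bc Q_1\yref(0))^{T}$.

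Plugging $(x_0,z_0)^T$ into the formula for $A_e$ and adding $B_e\distref(0)$, the first component of the sum becomes
\eq{
Ax_0+B\bigl[Q_2\Cc z_0-\Dc Q_1(Cx_0-\yref(0))\bigr],
}
and the second component becomes
\eq{
\Ac z_0-\Bc Q_1\bigl[(Cx_0-\yref(0))+D\Cc z_0\bigr].
}
The key algebraic identity is $\Dc Q_1=Q_2\Dc$, which follows from $\Dc(I+D\Dc)=(I+\Dc D)\Dc$ by multiplying on the left by $Q_2$ and on the right by $Q_1$. Substituting $\Cc z_0=\Dc(Cx_0-\yref(0))$ into the first bracket gives $(Q_2\Dc-\Dc Q_1)(Cx_0-\yref(0))=0$, so the first component collapses to $Ax_0\in X$. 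For the second bracket, the same substitution gives $Q_1[v+D\Dc v]=Q_1(I+D\Dc)v=v$ with $v=Cx_0-\yref(0)$, so the second component simplifies to $\Ac z_0-\Bc(Cx_0-\yref(0))$, which lies in $Z$ because $z_0\in\Dom(\Ac)$ and $\Bc$ is bounded.

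Both components therefore lie in their respective Hilbert spaces, so $A_ex_{e0}+B_e\distref(0)\in X_e$, as claimed. There is no serious obstacle here; the only care needed is that the terms involving $\CL$ and $\CcL$ are a priori valued in $X_{-1}$, respectively $Z_{-1}$, but the boundedness of $\Bc$ and $\Cc$ together with $x_0\in\Dom(A)$ ensure that every expression reduces to a genuine identity in $X$ and $Z$ after the algebraic cancellation provided by the compatibility condition $\Cc z_0=\Dc(Cx_0-\yref(0))$.
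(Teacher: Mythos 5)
Your verification is correct: the identity $\Dc Q_1=Q_2\Dc$ makes the bracket multiplying $B$ vanish under the compatibility condition $\Cc z_0=\Dc(Cx_0-\yref(0))$, so the first component reduces to $Ax_0\in X$, while the second component lies in $Z$ by the boundedness of $\Bc$. The paper omits the proof entirely (stating only that the lemma ``follows directly from the structures of $A_e$ and $B_e$''), and your direct componentwise computation is precisely the intended argument.
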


\subsection{A Robust Controller for $\tau$-Periodic Signals}
\label{sec:Contrtransport}

In this section we will construct a regular linear controller that achieves exponentially fast output regulation of $\tau$-periodic reference and disturbance signals.
The controller structure is based on a shift semigroup with periodic boundary conditions, and is related to controllers constructed in~\cite{HarYam88,WeiHaf99,Imm07a}.
 We assume that $\dim Y = p<\infty$, and that $\yref(t)$ and $\wdist(t)$ are $\tau$-periodic functions, i.e., $\SIndset=\Z$ and $\set{\gw_k}_{k\in\Z} = \set{ \frac{2\pi k}{\tau}}_{k\in\Z}$. 

\begin{definition}
  \label{def:contrtransp}
  Choose the controller as
  \begin{subequations}
    \label{eq:contrtransp}
    \eqn{
      \label{eq:contrtransp1}
      z_t(\xi,t) &= z_\xi(\xi,t), \qquad \xi\in(0,\tau), \quad t\geq 0,\\
      z(\cdot,0)&= z_0(\cdot)\in\Lp[2](0,\tau;\C^p),\\
      e(t) &= 2^{-1/2}(z(\tau,t)-z(0,t)),\\
     u(t) &= 2^{-1/2}(z(\tau,t)+z(0,t)) 
      + (\Dconeadd +\Dctwo)e(t)
    }
  \end{subequations}
  where $z(\xi,t)=(z_1(\xi,t),\ldots,z_p(\xi,t))^T$ and $\Dconeadd>0$.
  Choose $\Dctwo\geq 0$ in such a way that $\PARsysS$ is passive and exponentially stable.
\end{definition}

To achieve closed-loop stability, we also assume that $\re P_S(i\gw_k)\geq \eta>0$ for some constant $\eta>0$ and for all $k\in\Z$. If this condition is not satisfied, then exponential closed-loop stability is unachievable, but strong closed-loop stability can be studied using Theorem~\ref{thm:ContrConstInf} in the next section.

\begin{theorem}
  \label{thm:ContrConstTransp}
  Let $\yref(t)$ and $\wdist(t)$ be as in~\eqref{eq:yrefwdist} with $\gw_k = \frac{2\pi k}{\tau}$ for some $\tau>0$. 
  Assume 
  there exist $\eta,\eps>0$ such that $\re P_S(i\gw)\geq \eta>0$ for $\gw\in \Omega_\eps = \setm{\gw\in \R}{\exists k\in\Z:\abs{\gw-\gw_k}<\eps}$, and $\re  D>0$.
   Then the controller in Definition~\textup{\ref{def:contrtransp}} solves the robust output regulation problem in such a way that 
the closed-loop system is exponentially stable, and 
  there exist $M_e,\ga>0$ such that 
  \eq{
    \int_t^{t+1} \norm{e(s)} ds\leq M_e e^{-\ga t}(\norm{x_{e0}}+1) \qquad \forall x_{e0}\in X_e.
  }
The controller is robust with respect to all perturbations that preserve the exponential closed-loop stability, and for which $u(t)=-\Dctwo y(t)$ remains an admissible output feedback and
$\set{i\gw_k}_{k\in\Z}\subset \rho(\tilde{A}^S)$.
\end{theorem}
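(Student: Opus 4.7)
The plan is to recast the controller~\eqref{eq:contrtransp} as an impedance passive regular linear system $\PARcontr$ on $Z=\Lp[2](0,\tau;\C^p)$ and to verify the hypotheses of Theorem~\ref{thm:CLstabexp} applied to the pre-stabilized plant $\PARsysS$ paired with the reduced controller $(\Ac,\Bc,\Cc,\Dconeadd)$, as licensed by Remark~\ref{rem:Dcrole}. I would take $\Ac$ to be the shift generator $d/d\xi$ with the homogeneous periodic boundary $z(0)=z(\tau)$, and $\Bc,\Cc$ the boundary operators associated with the input $e=2^{-1/2}(z(\tau)-z(0))$ and observation $2^{-1/2}(z(\tau)+z(0))$. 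Regularity, impedance passivity, and $\Dc=\Dc^\ast$ follow directly from this structure. The operator $\Ac$ is skew-adjoint, so it generates a unitary group, and its spectrum on $i\R$ is exactly $\set{i\gw_k}_{k\in\Z}$ with eigenspaces spanned by $\xi\mapsto e^{i\gw_k\xi}v$ ($v\in Y$) of dimension $p=\dim Y$; the internal model conditions~\eqref{eq:Gconds} then follow immediately.

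A direct Laplace-domain computation using the ansatz $z(\xi)=e^{\gl\xi}z(0)$ gives
\eq{
  \G(\gl) = \frac{e^{\gl\tau}+1}{e^{\gl\tau}-1}I_Y + \Dconeadd + \Dctwo,
}
so that on $i\R\setminus\set{i\gw_k}_{k\in\Z}$ one has $\G(i\gw) = -i\cot(\gw\tau/2)I_Y + \Dconeadd + \Dctwo$, and hence $\re\G(i\gw)=\Dconeadd+\Dctwo\geq\gg_0>0$, where $\gg_0$ is the least eigenvalue of $\Dconeadd$. Since $\Ac$ is skew-adjoint, $\norm{R(i\gw,\Ac)}=1/\dist(i\gw,\gs(\Ac))\leq 1/\eps$ for $\gw\in\R\setminus\Omega_\eps$, and combined with $\re P_S(i\gw)\geq\gg>0$ on $\Omega_\eps$ and $\gs(\Ac)\cap i\R\subset i\Omega_\eps$ this establishes hypotheses~(1) and~(2) of Theorem~\ref{thm:CLstabexp} with $\Omega=\Omega_\eps$ and the uniform constant $\gg_0$.

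The main obstacle is hypothesis~(3): exponential stabilization of $\PARcontr$ by $\uc=-\mu D\yc$ for some $\mu\in(0,1)$. Substituting this into the boundary equations eliminates the abstract input and produces a closed transport system with scattering boundary $z(\tau)=M_\mu z(0)$, where $M_\mu=(I+\mu D\Dc+\mu D)\inv(I+\mu D\Dc-\mu D)\in\Lin(Y)$. Since characteristics traverse $(0,\tau)$ in time $\tau$, the period-$\tau$ map of the resulting semigroup acts pointwise in $\xi$ as $M_\mu$, so $\norm{T_\mu(\tau)}_{\Lin(Z)}=\norm{M_\mu}_{\Lin(Y)}$ and exponential decay reduces to $\norm{M_\mu}<1$. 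To verify this I would use passivity directly: since $\frac{d}{dt}\norm{z}^2\leq -2\mu d\norm{y_c}^2$ with $d=\gl_{\min}(\re D)>0$, if $\norm{T_\mu(\tau)z_0}=\norm{z_0}$ for some nonzero $z_0\in Z$ then $y_c\equiv 0$ on $[0,\tau]$; combining $(I+\mu\Dc D)y_c=2^{-1/2}(z(\tau,\cdot)+z(0,\cdot))$ with $e=-\mu Dy_c=2^{-1/2}(z(\tau,\cdot)-z(0,\cdot))$ and $y_c\equiv 0$ forces $z(0,t)=z(\tau,t)=0$ on $[0,\tau]$ and hence $z_0\equiv 0$. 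Applying this to any constant $z_0=v\in Y$ yields $\norm{M_\mu v}<\norm{v}$ for every nonzero $v\in Y$, and compactness of the unit sphere in the finite-dimensional $Y=\C^p$ then gives $\norm{M_\mu}<1$.

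With every hypothesis verified, Theorem~\ref{thm:CLstabexp} yields exponential closed-loop stability. Invoking~\citel{Pau17b}{Thm.~13} exactly as at the end of the proof of Theorem~\ref{thm:ContrConstFin} (the internal model conditions having been verified in the first paragraph) shows that the controller solves the robust output regulation problem. The uniform exponential bound on $\int_t^{t+1}\norm{e(s)}\,ds$ follows from the remark after the problem formulation in Section~\ref{sec:RORP}. Robustness under the stated class of perturbations follows from the same internal model principle, since the internal model is carried entirely by $\Ac$ and is unaffected by perturbations of the plant.
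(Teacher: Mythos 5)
Your proposal is correct and follows the same overall strategy as the paper: recast~\eqref{eq:contrtransp} as a passive regular linear system on $\Lp[2](0,\tau;\C^p)$, compute the transfer function, verify that $\Ac$ carries the internal model, check the hypotheses of Theorem~\ref{thm:CLstabexp} for $\PARsysS$ paired with the reduced controller, and conclude via~\citel{Pau17b}{Thm.~13}. The one genuine divergence is in hypothesis~(3): the paper reduces the feedback $\uc=-D_0\yc$ to the transport system with boundary condition $(I+\Dtot)z(\tau,t)=(I-\Dtot)z(0,t)$, $\Dtot=D_0(I+\Dconeadd D_0)\inv$, and then cites~\citel{VilZwa09}{Thm.~III.2} together with $\re\Dtot>0$ (Lemma~\ref{lem:Repostoinv}(c)); you instead prove the needed exponential decay from scratch by observing that the period-$\tau$ map acts pointwise as the scattering matrix $M_\mu$, so that the problem reduces to $\norm{M_\mu}<1$, which you extract from the energy dissipation identity plus compactness of the unit sphere of $\C^p$. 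Your route is more elementary and self-contained, at the cost of redoing a special case of a known result; the paper's citation is shorter and covers the same ground. Two small notational slips worth fixing: the feedthrough of the regular-system realization is $\lim_{\gl\to\infty}G_0(\gl)=I+\Dconeadd+\Dctwo$ (the ``$+I$'' coming from the $2^{-1/2}(z(\tau)+z(0))$ term), so the reduced controller is $(\Ac,\Bc,\Cc,I+\Dconeadd)$ rather than $(\Ac,\Bc,\Cc,\Dconeadd)$, and accordingly the matrix in your boundary condition should read $M_\mu=(I+\mu D\Dconeadd+\mu D)\inv(I+\mu D\Dconeadd-\mu D)$; neither change affects the validity of the argument, since $\re\G(i\gw)=\Dconeadd>0$ off the spectrum either way and your energy argument only uses $\re(\mu D)>0$.
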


\begin{proof}
  The controller in Definition~\ref{def:contrtransp} consists of $p=\dim Y$ independent one-dimensional periodic transport equations with boundary control and observation, and an additional feedthrough $(\Dconeadd+\Dctwo)e(t)$.
  The system~\eqref{eq:contrtransp} defines a regular linear system with state $z(t)=z(\cdot,t)$ on
  $Z=\Lp[2](0,\tau;\C^p)$~\cite[Thm. 2.4]{ZwaLeg10},
  and a direct computation shows that its transfer function from $e(t)$ to $u(t)$ is 
  \eq{
    G_0(\gl) = \frac{1+e^{-\gl \tau}}{1-e^{-\gl \tau}}I + \Dconeadd + \Dctwo, \qquad \gl\notin \Bigl\{i \frac{2\pi k}{\tau}\Bigr\}_{k\in\Z}.
  }
  Thus the controller can be written as a system $\PARcontr$ on $Z$ where $A_c$ satisfying 
  $\Ac f  = f'$ for $f\in \Dom(\Ac )=\setm{f\in H^1(0,\tau;\C^p)}{f(0)=f(\tau)}$ generates a unitary group with spectrum $\gs(\Ac )=\set{i \frac{2\pi k}{\tau}}_{k\in\Z}$. 
We also have $\dim \ker(i\gw_k-\Ac )=\dim Y$ for every $k\in\Z$, and thus $\Ac$ contains an internal model of the signals~\eqref{eq:yrefwdist}.
  By~\cite[Thm. 13]{Pau17b} the controller solves the robust output regulation problem if the closed-loop system is exponentially stable.

   To show closed-loop stability, we will verify the conditions of Theorem~\ref{thm:CLstabexp}  for the systems $\PARsysS$ and $\PARcontrone$ 
   with $\Omega=\Omega_\eps$.
   For this we will consider the controller with inputs and outputs
   \eq{
     \uc(t) &= 2^{-1/2}(z(\tau,t)-z(0,t)),\\
     \yc(t) &= 2^{-1/2}(z(\tau,t)+z(0,t) )
     + (\Dconeadd +\Dctwo) \uc(t).
   }
The feedthrough operator of the controller is given by
  \ieq{
    \Dc =  \lim_{\gl\to \infty}G_0(\gl) = I+\Dconeadd + \Dctwo.
  }
  Without the component $(\Dconeadd+\Dctwo)\uc(t)$ of the feedthrough 
  the solutions of~\eqref{eq:contrtransp} satisfy $\ddb{t}\norm{z(t)}_{\Lp[2]}^2 = 2 \re \iprod{u_c(t)}{y_c(t)}$, and thus the controller is passive by~\cite[Thm. 4.2]{Sta02}. 
  Let $\dc>0$ be such that $\Dconeadd\geq \dc>0$. The transfer function $\G(\gl)$ of $(\Ac,\Bc,\CcL,I+\Dconeadd)$ satisfies $\re \G(i\gw) = \Dconeadd\geq \dc>0$ for all $\gw\in\R\setminus \set{\gw_k}_{k\in\Z}$, and thus condition~(2) of Theorem~\ref{thm:CLstabexp} is satisfied.
  To show that condition~(3) of Theorem~\ref{thm:CLstabexp} is satisfied, it is sufficient to show that for any
$D_0\in \Lin(U)$ with $\re D_0>0$
the system $(\Ac,\Bc,\CcL,I+\Dconeadd)$ is stabilized exponentially with feedback $\uc(t)=-D_0 \yc(t)$.
The feedback leads to a partial differential equation 
    \eq{
      z_t(\xi,t) &= z_\xi(\xi,t), \qquad \xi\in(0,\tau), \quad t\geq 0,\\
       (I+\Dtot)z(\tau,t)&= (I-\Dtot)z(0,t). 
    }
where $\Dtot = D_0(I+\Dconeadd D_0)\inv$.
    The exponential stability of this system follows from a straightforward application 
    of~\citel{VilZwa09}{Thm. III.2},
since $\re \Dtot > 0$ by Lemma~\ref{lem:Repostoinv}(c).
Thus Theorem~\ref{thm:CLstabexp} shows that the closed-loop system is exponentially stable.
\end{proof}

  \begin{remark}
    \label{rem:errorconvsize}
    The results in~\cite{Pau17b} also show that if
    $(\yrefk)_k = (a_ky_k)_k $ and $(\wdistk)_k=(a_kw_k)_k$
    where $(y_k)_k\in\lp[2](Y)$, $(w_k)_k\in\lp[2](U_d)$ are fixed, and
    $(a_k)_k\in \lp[2](\C)$,
    then there exist $M_e,\ga >0 $ such that 
\eq{
  \int_t^{t+1} \norm{e(s)} ds\leq M_ee^{-\ga t} \left( \norm{x_{e0}}+ \norm{(a_k)_k}_{\lp[2]}  \right)
}
    for all $ x_{e0}\in X_e$ and $ (a_k)_k\in \lp[2](\C)$.
  \end{remark}

  Lemma~\ref{lem:RegErrForm} implies the following result on the pointwise convergence of $\norm{e(t)}$. The conditions require that
$\yref(t)$ and $\wdist(t)$ have a sufficient levels of smoothness. 
\begin{corollary}
  \label{cor:ContrTranspPointwise}
  If the signals~\eqref{eq:yrefwdist} are such that 
$(k \yrefk)_k\in \lp[1]( Y)$
  and $(k \wdistk)_k\in \lp[1](U_d)$,
  then in Theorem~\textup{\ref{thm:ContrConstTransp}}
  there exist $M_e,\ga>0$ such that
  for all $x_{e0}\in X_e$ satisfying $A_ex_{e0}+B_e\distref(0)\in X_e$ we have
  \eq{
    \norm{e(t)}\leq M_ee^{-\ga t}(\norm{A_ex_{e0}+B_e\distref(0)}+1).
  }
\end{corollary}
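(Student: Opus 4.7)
Since Theorem~\ref{thm:ContrConstTransp} provides exponential closed-loop stability, the hypotheses of Lemma~\ref{lem:RegErrForm} concerning the auxiliary operator $H$ are automatic, as noted in the remark preceding that lemma. The plan is therefore to verify the convergence of the series defining $\qext$ in $X_e$, and then extract the exponential bound from the formula supplied by the lemma.

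First I would bound $\qext$. Since $T_e(t)$ is exponentially stable we have $i\R\subset\rho(A_e)$, and the regularity of the closed-loop system yields $M_B := \sup_{\gw\in\R}\norm{R(i\gw,A_e)B_e}<\infty$. With $\gw_k=2\pi k/\tau$ and $\distrefk=(\wdistk,\yrefk)^T$ this gives the absolutely convergent estimate
\eq{
\norm{\qext}
&\leq \sum_{k\in\Z}\abs{\gw_k}\,\norm{R(i\gw_k,A_e)B_e\distrefk}_{X_e} \\
&\leq M_B\,\frac{2\pi}{\tau}\sum_{k\in\Z}\abs{k}\bigl(\norm{\wdistk}+\norm{\yrefk}\bigr)<\infty,
}
where finiteness of the last sum uses exactly the hypotheses $(k\,\yrefk)_k\in\lp[1](Y)$ and $(k\,\wdistk)_k\in\lp[1](U_d)$. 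Denote the resulting upper bound by $C>0$.

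Apply Lemma~\ref{lem:RegErrForm}: for every $x_{e0}\in X_e$ with $A_ex_{e0}+B_e\distref(0)\in X_e$ we obtain, for almost all $t>0$,
\eq{
e(t) = \CeL T_e(t) A_e\inv v, \qquad v := A_ex_{e0}+B_e\distref(0)-\qext\in X_e.
}
Since exponential stability gives $0\in\rho(A_e)$, we have $A_e\inv=-R(0,A_e)$, which commutes with $T_e(t)$, so $e(t)=-\CeL R(0,A_e)T_e(t)v$. The operator $\CeL R(0,A_e)\in\Lin(X_e,Y)$ is bounded by regularity of the closed-loop system, and $\norm{T_e(t)}\leq \tilde{M}e^{-\ga t}$ for some $\tilde{M},\ga>0$. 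Together with $\norm{v}\leq \norm{A_ex_{e0}+B_e\distref(0)}+C$ this yields
\eq{
\norm{e(t)} \leq \norm{\CeL R(0,A_e)}\,\tilde{M}\,e^{-\ga t}\bigl(\norm{A_ex_{e0}+B_e\distref(0)}+C\bigr),
}
which is the claim after absorbing all constants into a single $M_e>0$.

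The only genuine obstacle is the book-keeping around the series for $\qext$: once its absolute convergence has been established, which is precisely where the hypothesis $(k\,\distrefk)_k\in\lp[1]$ enters through the factor $\abs{\gw_k}$, the remainder is a direct combination of the representation formula from Lemma~\ref{lem:RegErrForm}, the regularity bound $\CeL R(0,A_e)\in\Lin(X_e,Y)$, and the uniform exponential decay of $T_e(t)$.
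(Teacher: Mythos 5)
Your proof is correct and follows exactly the route the paper intends: the paper gives no separate proof of this corollary beyond the remark that it follows from Lemma~\ref{lem:RegErrForm}, and your argument (uniform boundedness of $R(i\gw,A_e)B_e$ from exponential stability and regularity, absolute convergence of $\qext$ from the $\lp[1]$ hypotheses, then the error formula combined with $\CeL A_e\inv\in\Lin(X_e,Y)$ and the exponential decay of $T_e(t)$) is precisely the computation carried out in the proof of Proposition~\ref{prop:ContrFinPointwise}. No gaps.
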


If $P(i\mu_j)$ is not invertible for some $\set{i\mu_j}_{j=1}^N\subset \set{i \frac{2\pi k}{\tau}}_{k\in\Z}$, for example for $\mu_j=0$, then the robust output regulation problem is not solvable for signals $\yref(t)$ and $\wdist(t)$ containing these frequencies.  In this situation we can modify the controller in Definition~\ref{def:contrtransp} by replacing~\eqref{eq:contrtransp1} with
\eq{
  z_t(\xi,t) &= z_\xi(\xi,t)-\frac{1}{\tau} \sum_{j=1}^N\sum_{k=1}^p e_k\cdot e^{i\mu_j \xi}\int_0^\tau z_k(s,t)e^{-i\mu_j s}ds, \quad \xi\in(0,\tau) ,
}
where $\set{e_k}_{k=1}^p$ are the Euclidean basis vectors of $\C^p$.
This corresponds to stabilizing the eigenvalues $\set{i\mu_j}_{j=1}^N$ of the transport system~\eqref{eq:contrtransp}, and the resulting controller has the property  $\gs(\Ac )\cap i\R=\set{i \frac{2\pi k}{\tau}}_{k\in\Z}\setminus \set{i\mu_j}_{j=1}^N$.
With this modification the system operator of the controller is of the form $\Ac = \Ac^0-B_0B_0^\ast$ with $B_0\in \Lin(\C^{Np},Z)$. The controller is again passive and is stabilized exponentially with feedback $u_c(t)=-D_0 y_c(t)$ with $\re D_0>0$, and the exponential closed-loop stability follows from Theorem~\ref{thm:CLstabexp}.

\subsection{A Robust Controller for Nonsmooth Signals}
\label{sec:ContrInfExoDiag}

In this section we construct an infinite-dimensional diagonal controller for signals~\eqref{eq:yrefwdist} with a general set $\set{\gw_k}_{k\in\Z}$ of distinct frequencies with no finite accumulation points.
The controller can also be used for systems with an infinite-dimensional output space $Y$.
If $\yref(t)$ and $\wdist(t)$ are $\tau$-periodic and $\dim Y<\infty$, then the controller is of similar form as in Definition~\ref{def:contrtransp}.

\begin{definition}
  \label{def:contrinfdim}
Choose  $Z=\lp[2](\SIndset;Y)$ and 
  \eq{
  \Ac  
  &= \diag(i\gw_k I_Y)_{k\in\SIndset}, \quad \Dom(\Ac ) = \Setm{(z_k)_k\in Z}{(\abs{\gw_k}\norm{z_k})_k\in \lp[2](\C)},
  }
  where $I_Y$ is the identity operator on $Y$. Let $\Dc=\Dcone + \Dctwo$ with $\Dcone>0$ and $\Dctwo\geq 0$. 
    Choose admissible  $\Bc\in \Lin(Y,Z_{-1})$ and $\Cc\in \Lin(Z_1,Y)$ as
    \eq{
      \Bc y = (\Bck y)_k \quad \forall y\in Y,
      \qquad 
      \Cc z &= \sum_{k\in\SIndset}\Bck^\ast z_k \quad\; \forall z\in \Dom(\Ac ),
    }
    with boundedly invertible $\Bck\in \Lin(Y)$ 
    so that $\PARcontrone$ is a regular linear system
    whose transfer function $\G(\gl)$  satisfies $\re G(i\gw)\geq \dc>0$ for some constant $\dc>0$ and for all $\gw\in\R\setminus \set{\gw_k}_{k\in\SIndset}$.  
Finally, choose $\Dctwo\geq 0$ in such a way that $\PARsysS$ is passive and strongly stable with $i\R\subset \rho(A^S)$.  
\end{definition}

If $\dim Y<\infty$ and $\set{\gw_k}_{k\in\SIndset}$ has a uniform gap, i.e., $\inf_{k\neq l}\abs{\gw_k-\gw_l}>0$, then~\cite[Cor. 5.2.5, Prop. 5.3.5]{TucWei09book} imply that $\Bc$ and $\Cc$ are admissible with respect to $\Ac$ if $(\norm{\Bck})_{k\in\SIndset}\in \lp[\infty](\C)$ and $(\norm{\Cck})_{k\in\SIndset}\in \lp[\infty](\C)$. For more general conditions for admissibility, see~\cite[Sec. 5.3]{TucWei09book}.
The system $\PARcontrone$ is regular whenever $\Bc$ and $\Cc$ are admissible and there exists $\eps>0$ such that $((1+\abs{\gw_k})^{-1/2+\eps}\norm{\Bck})_k\in \lp[2](\C)$~\cite[Prop. 4.1]{CurWei06}.
However, there are also regular linear systems, such as the controller in Definition~\ref{def:contrtransp}, for which neither of these conditions is satisfied.
  If $\set{\gw_k}_{k\in\Z}$ has a uniform gap, $(\abs{\gw_k}^{\eps}\norm{\Bck})_k\in \lp[\infty](\C)$ for some $\eps>0$ and $\Dcone>0$, 
  then $\PARcontrone$ satisfies the conditions of Definition~\ref{def:contrinfdim}.

Due to the lack of exponential closed-loop stability, 
the solvability of the robust output regulation problem requires additional conditions on the reference and disturbance signals. These conditions  relate the behaviour of the
coefficients $\yrefk$ and $\wdistk$
to the behaviour of the transfer functions
$P(\gl)$ and $P_d(\gl)$
on the frequencies $\set{\gw_k}_{k\in\SIndset}$.
We pose conditions on the sequences $\Pitrans = (\Pitransk)_{k\in\SIndset}\subset \XBBd\times Y$ consisting of the elements $\Pitransk=(\Pitranskone,\Pitransktwo)$ with
\eq{
\Pitranskone &=R(i\gw_k,A^S) B^S \Pipartk +  R(i\gw_k,A^S) B_d\wdistk, \\
\Pitransktwo &=(\Bck^\ast)\inv (\Pipartk - \Dctwo \yrefk),
}
where
$\Pipartk = P_S(i\gw_k)\inv \yrefk- P_S(i\gw_k)\inv \CL^S R(i\gw_k,A^S) B_d\wdistk $.
In the case of a perturbed system, we define $\Pitranspert=(\Pitranskpert)_{k\in\SIndset}$ analogously.
Alternate ways of expressing $\Pitransk$ are presented in Lemma~\ref{lem:Pitransalt}.
Note in particular that if $\PARsysS$ is exponentially stable, then~\eqref{eq:RORPEFcondShapen} are satisfied provided that
$(\norm{u_k})_k\in\lp[1](\C)$ and 
$(\norm{\Bck\inv}\norm{\Pipartk-\Dctwo \yrefk})_k\in\lp[2](\C)$.

\begin{theorem}
  \label{thm:ContrConstInf}
  Assume $\re P_S(i\gw_k)>0$ for all $k\in\SIndset$.
  The controller
in Definition~\textup{\ref{def:contrinfdim}}
solves the robust output regulation problem for all 
$\yref(t)$ and $\wdist(t)$ whose coefficients satisfy
  \eqn{
    \label{eq:RORPEFcondShapen}
    (\Pitranskone)_k\in \lp[1](X), \quad\ 
    (\Pitransktwo)_k\in \lp[2](Y), \quad\ 
    (\Pipartk)_k\in \lp[1](U).
  }
The closed-loop system is strongly stable and $i\R\subset \rho(A_e)$.

The controller is robust with respect to all perturbations $\PARsysopspert$ for which
$u(t)=-\Dctwo y(t)$ remains an admissible output feedback, 
the strong closed-loop stability is preserved, $\set{i\gw_k}_{k\in\SIndset}\subset\rho(\tilde{A}_e)\cap \rho(\tilde{A}^S)$,
 $\tilde{P}_S(i\gw_k)$ are invertible for $k\in \SIndset$,
and 
$ (\Pitranskpert )_{k\in\SIndset}$ satisfies~\eqref{eq:RORPEFcondShapen}.

If the closed-loop system is exponentially stable, then~\eqref{eq:RORPEFcondShapen} are satisfied automatically, and
  there exist $M_e,\ga>0$ such that 
  \ieq{
    \int_t^{t+1} \norm{e(s)} ds\leq M_e e^{-\ga t}(\norm{x_{e0}}+1) 
  }
  for all $x_{e0}\in X_e$.
\end{theorem}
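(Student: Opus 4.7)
The plan is to verify the hypotheses of Theorem~\ref{thm:CLstabstr} and then invoke the internal model principle of~\citel{Pau17b}{Thm.~13}, much in the spirit of the proof of Theorem~\ref{thm:ContrConstFin}, but now accommodating an infinite $\SIndset$ and a possibly non-exponentially stable closed loop.

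First I would check that $\PARcontrone$ contains an internal model of the signals~\eqref{eq:yrefwdist}, i.e., the conditions~\eqref{eq:Gconds}. Since each $\Bc^k$ is boundedly invertible on $Y$, the block-diagonal structure of $\Bc$ gives $\ker(\Bc)=\set{0}$. The diagonal structure of $\Ac$ implies that every element in $\ran(i\gw_k-\Ac)$ has vanishing $k$-th component, whereas the $k$-th component of $\Bc y$ is $\Bc^k y$, which vanishes only if $y=0$; this yields~\eqref{eq:Gconds1}. Next I would apply Theorem~\ref{thm:CLstabstr} to $\PARsysS$ and $\PARcontrone$. Condition~(1) follows because $\gs(\Ac)\cap i\R=\set{i\gw_k}_{k\in\SIndset}$ by the diagonal construction and $\re P_S(i\gw_k)>0$ by assumption. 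Condition~(2) holds vacuously because $\re\G(i\gw)\geq \dc>0$ for $\gw\in\R\setminus \set{\gw_k}$ by Definition~\ref{def:contrinfdim}. Condition~(3) would be established by the contradiction argument used at the end of the proof of Theorem~\ref{thm:CLstabnonuniform}: if $i\gw_k\in\gs(\Ac-\Bc D_0(I+\Dc D_0)\inv\Cc)$ for some $\re D_0>0$, then~\citel{AreBat01book}{Cor.~4.3.2} and passivity of $\PARcontrone$ produce a sequence of unit vectors $z_n$ along which $\Cc z_n\to 0$ and $(i\gw_k-\Ac)z_n\to 0$, which forces $z_n$ to concentrate in $\ker(i\gw_k-\Ac)=Z_k$; but the restriction $\Cck$ is boundedly invertible, contradicting $\Cc z_n\to 0$.

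Once strong closed-loop stability and $i\R\subset\rho(A_e)$ are obtained, I would invoke~\citel{Pau17b}{Thm.~13} to conclude that the controller solves the robust output regulation problem for those signals whose associated sequences $\Pitrans$ satisfy~\eqref{eq:RORPEFcondShapen}. Here the role of the three summability conditions is to guarantee convergence of the series defining $\qext$ in~\eqref{eq:qextdef} and boundedness of the operator $H$ in Lemma~\ref{lem:RegErrForm}, which together give the integrated error convergence~\eqref{eq:errintconv}; the alternative forms of $\Pitransk$ from Lemma~\ref{lem:Pitransalt} would be used to transfer these conditions from closed-loop quantities to the given system quantities $P_S(i\gw_k)$, $\Cck$, $\yrefk$, $\wdistk$. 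The robustness conclusion follows because the perturbation hypotheses are precisely those keeping all the ingredients just listed intact: the internal model is built into $\Ac$ and is unchanged, the closed-loop stability is assumed, and the invertibility of $\tilde{P}_S(i\gw_k)$ together with $\set{i\gw_k}\subset \rho(\tilde{A}_e)\cap\rho(\tilde{A}^S)$ ensures the perturbed Sylvester vectors $\Pitranskpert$ are well-defined and still satisfy~\eqref{eq:RORPEFcondShapen} by hypothesis.

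For the final assertion, in the exponentially stable case $\norm{R(\cdot,A_e)}$ is uniformly bounded on $i\R$, so by the admissibility of $B^S$ and $B_d$ and the fact that $P_S(i\gw_k)\inv$ is uniformly bounded near infinity (using $\re P_S(i\gw_k)\geq\gg>0$ that the exponential stability of $(A^S,B^S,C^S,D^S)$ delivers), the Sylvester vectors $\Pitransk$ are controlled by $\norm{\yrefk}+\norm{\wdistk}$. Since $(\yrefk)_k\in\lp[1](Y)$ and $(\wdistk)_k\in\lp[1](U_d)$ by the standing assumption in Section~\ref{sec:RORP}, the conditions~\eqref{eq:RORPEFcondShapen} become automatic; then the exponential decay estimate for $\int_t^{t+1}\norm{e(s)}ds$ follows from~\citel{Pau17b}{Sec.~3} exactly as in the analogous claim of Theorem~\ref{thm:ContrConstFin}.

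The main obstacle I anticipate is the careful bookkeeping needed to translate the abstract summability conditions~\eqref{eq:RORPEFcondShapen} into boundedness of the operator $H$ in Lemma~\ref{lem:RegErrForm} and convergence of the series~\eqref{eq:qextdef} in $X_e$ for general (non-exponentially-stable) closed loops; the verification of condition~(3) of Theorem~\ref{thm:CLstabstr} at every $i\gw_k$ must also be done uniformly, since $\SIndset$ may be infinite, but the argument is local at each $k$ and so the quantifier trade is mild.
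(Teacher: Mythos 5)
Your high-level strategy (closed-loop stability via Theorem~\ref{thm:CLstabstr} applied to $\PARsysS$ and $\PARcontrone$, then the internal model principle of \citel{Pau17b}{Thm.~13}) is the same as the paper's, and your verification of~\eqref{eq:Gconds} and of conditions~(1)--(2) of Theorem~\ref{thm:CLstabstr} is fine. But the step you defer as ``careful bookkeeping'' is in fact the core of the proof, and it cannot be waved through. The direct hypothesis of \citel{Pau17b}{Thm.~13} would be $R(i\gw_k,A_e)B_e\distrefk\in\lp[1](X_e)$, and the conditions~\eqref{eq:RORPEFcondShapen} do \emph{not} deliver this: the controller components $\Pitransktwo$ are only assumed square-summable. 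The paper instead identifies $R(i\gw_k,A_e)B_e\distrefk=(\Pitranskone,z_k)$ explicitly (with $z_k$ supported on the $k$-th diagonal block and equal to $\Pitransktwo$ there), constructs a concrete weight sequence $(f_k)_k\in\lp[2](\C)$, and then proves by hand that the operator $H$ of Lemma~\ref{lem:RegErrForm} is bounded --- using an $\lp[2]$ estimate for the $X$-components and an $\lp[\infty]$ estimate for the mutually orthogonal $Z$-components --- and that $\ran(H)\subset\Dom(\CeL)$ via a resolvent-identity limit exploiting the regularity of the closed-loop system. None of this appears in your proposal, and it is precisely where the mixed $\lp[1]/\lp[2]/\lp[1]$ form of~\eqref{eq:RORPEFcondShapen} is actually used. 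Relatedly, you assert that~\eqref{eq:RORPEFcondShapen} guarantees convergence of $\qext$ in~\eqref{eq:qextdef}; it does not (that requires the strengthened conditions with the extra factor $\gw_k$, as in Proposition~\ref{prop:ContrInfPointwise}), and convergence of $\qext$ is not needed for the statement being proved.

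Two smaller problems. First, for condition~(3) of Theorem~\ref{thm:CLstabstr} you import the approximate-eigenvector argument from the end of the proof of Theorem~\ref{thm:CLstabnonuniform}; that argument uses $\Bc\in\Lin(Y,Z)$ to pass from $\norm{\Cc z_n}\to 0$ to $\norm{(i\gw_k-\Ac)z_n}\to 0$ in $Z$, whereas Definition~\ref{def:contrinfdim} only assumes $\Bc\in\Lin(Y,Z_{-1})$ and $\Cc\in\Lin(Z_1,Y)$. The paper avoids this by decomposing $Z=\ker(i\gw_n-\Ac)\oplus\ker(i\gw_n-\Ac)^\perp$ and applying the Woodbury formula, which works for unbounded $\Bc,\Cc$. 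Second, your justification of the final exponential-stability claim relies on ``$\re P_S(i\gw_k)\geq\gg>0$ that the exponential stability of $(A^S,B^S,C^S,D^S)$ delivers''; exponential stability of the plant alone gives no such uniform lower bound (only $\re P_S(i\gw_k)>0$ pointwise is assumed). The paper's route is cleaner and correct: exponential closed-loop stability gives a uniform bound on $R(i\gw_k,A_e)B_e$, so $(\Pitransk)_k=(R(i\gw_k,A_e)B_e\distrefk)_k\in\lp[1]$ directly from $(\distrefk)_k\in\lp[1]$, which yields~\eqref{eq:RORPEFcondShapen} automatically.
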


\begin{proof}
  The proof is based on the application of~\cite[Thm. 13]{Pau17b}.
  The diagonal structure of the controller and the invertibility of $\Bck$ imply that $\Ac$ and $\Bc$ satisfy the conditions~\eqref{eq:Gconds}.
To show that the closed-loop system is strongly stable, we apply Theorem~\ref{thm:CLstabstr}
 for the systems $\PARsysS$ and $\PARcontrone$. 
 Conditions~(1) and~(2) are satisfied due to the construction in Definition~\ref{def:contrinfdim}, and
condition~(3) 
is satisfied
by Lemma~\ref{lem:CLstabstrongsuff} since $\Cck=\Bck^\ast$ are invertible.
Thus by Theorem~\ref{thm:CLstabstr} the closed-loop system is strongly stable and $i\R\subset \rho(A_e)$.

To apply~\cite[Thm. 13]{Pau17b} directly, we would need  
  $R(i\gw_k,A_e)B_e \distrefk
  \in \lp[1](X_e)$. However, in~\cite{Pau17b} this property is used as a sufficient condition for the existence of $(f_k)_k\in\lp[2](\C)$ such that the operator $H: \Dom(H)\subset \lp[2](\C)\to X_e$
  in Lemma~\ref{lem:RegErrForm}
  satisfies $H\in \Lin(\lp[2](\C),X_e)$ and $\ran(H)\subset \Dom(\CeL)$. 
  Here we will verify that the sequence 
$(f_k)_k\in\lp[2](\C)$ 
 with
 \eq{
   f_k = 
   \begin{cases}
     \norm{\Pitransktwo}+(\norm{\distrefk}+\norm{\Pitranskone}+ \norm{u_k})^{1/2} &\mbox{if}~\distrefk\neq 0 \\
     2^{-\abs{k}}&\mbox{if}~\distrefk=0
   \end{cases}
 }
 has this property.
If $k\in\SIndset$ and $x_e^k = (\Pitranskone,z_k)\in \XBBd\times \ZG$ where
\eq{
  z_k &= (z_k^j)_{j\in\SIndset}, \quad 
  z_k^k = \Pitransktwo , \quad z_k^j=0, ~ j\neq k,
  } 
then it is straightforward to verify that $(i\gw_k-A_e)x_e^k = B_e\distrefk$, and thus we have $R(i\gw_k,A_e)B_e\distrefk = (\Pitranskone,z_k)$.
  Now $(f_k\inv (\norm{\distrefk}+\norm{\Pitranskone}+\norm{u_k}))_k\in\lp[2](\C)$ and $(f_k\inv \Pitransktwo)_k\in\lp[\infty](Y)$.
  These properties and the structure of $R(i\gw_k,A_e)B_e\distrefk$ imply that $Hv$ is well-defined for every $v\in \lp[2](\C)$, and 
  \eq{
    \norm{Hv}^2 &= 
\biggl\|\sum_{k\in\SIndset} f_k\inv\Pitranskone  v_k\biggr\|_X^2 + \bigl\| \left( f_k\inv\Pitransktwo  v_k \right)_k\bigr\|_{\lp[2](Y)}^2\\
&\leq \norm{v}^2 \norm{(f_k\inv\Pitranskone )_k}_{\lp[2](X)}^2 +
            \norm{v}^2  \norm{(f_k\inv\Pitransktwo )_k}_{\lp[\infty](Y)}^2
  }
  implies $H\in \Lin(\lp[2](\C),X_e)$.
  It remains to show $\ran(\Sigma )\subset \Dom(\CeL)$. 
 If we denote $\Pez(\gl)=\CeL R(\gl,A_e)B_e$, 
 then
 \ieq{
   \Pez(i\gw_k) \distrefk 
   = -Q_1(\CL \Pitranskone + D(\Pipartk-\Dctwo\yrefk))
 }
for every $k\in\SIndset$.
 The regularity of $\PARsysS$ and~\eqref{eq:RORPEFcondShapen} imply $(f_k\inv \Pez(i\gw_k)\distrefk)_k\in \lp[2](Y)$.
 If $v\in \lp[2](\C)$ and $\gl>0$,  
  the resolvent identity implies
  \eq{
   \gl \CeL R(\gl,A_e)Hv
   &=\sum_{k\in\SIndset} \frac{\gl f_k\inv v_k}{\gl-i\gw_k} \Pez(i\gw_k) \distrefk 
- \Pez(\gl)\sum_{k\in\SIndset} \frac{\gl f_k\inv v_k}{\gl-i\gw_k}  \distrefk \\
&\quad \longrightarrow
   \sum_{k\in\SIndset} f_k\inv \Pez(i\gw_k) \distrefk v_k
  }
  as $\gl\to \infty$
  since $(A_e,B_e,C_e)$ is regular and since $(f_k\inv \Pez(i\gw_k)\distrefk v_k)_k\in \lp[1](Y)$ and $(f_k\inv \distrefk v_k)\in \lp[1](U_d\times Y)$.
  Thus $Hv\in \Dom(\CeL)$ by definition.  
An analogous argument shows that for perturbed systems $\PARsysopspert$ the sequence $(f_k)_k$ 
can again be chosen so that $\tilde{H}$ has the required properties. 
Thus the claims of the theorem follow from~\cite[Thm. 13]{Pau17b}.
If the closed-loop system is exponentially stable, then $(\Pitranskone,z_k)=R(i\gw_k,A_e)B_e\distrefk$ implies 
$(\Pitransk)_k\in \lp[1](X\times Y)$,
which also shows $(\norm{u_k})_k\in \lp[1](\C)$.
\end{proof}

The following alternate expressions for $\Pitransk$ can be verified using standard operator identities and Lemma~\ref{lem:Woodbury}.

\begin{lemma}
\label{lem:Pitransalt}
    If $i\gw_k\in\rho(A)$ 
      for some $k\in \SIndset$, then 
      \eq{
\Pitranskone &=  R(i\gw_k,A)B_d\wdistk+R(i\gw_k,A)B \Pipartkalt \\
  \Pitransktwo &= (\Bck^\ast)\inv \Pipartkalt, \qquad \Pipartk = \Pipartkalt + \Dctwo \yrefk
      }
where
$\Pipartkalt = P(i\gw_k)\inv \yrefk- P(i\gw_k)\inv P_d(i\gw_k)\wdistk $.
If $D$ is boundedly invertible, then
\ieq{
 \Pitranskone= R_k^DB_d \wdistk +  R(i\gw_k,A^S)B^S P_S(i\gw_k)\inv \yrefk
 }
for all $k\in\SIndset$,
 where $R_k^D = R(i\gw_k,A^S-B^S(D^S)\inv \CL^S)$.
\end{lemma}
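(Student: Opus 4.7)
The plan is to verify both identities by direct algebra, leaning on the Woodbury formula (Lemma~\ref{lem:Woodbury}) and on the standard transfer-function relation $P_S(\gl) = P(\gl)(I+\Dctwo P(\gl))\inv$ between the original plant and the pre-stabilized system $\PARsysS$.

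For part~(i), I would start from the inverse form $P_S(i\gw_k)\inv = P(i\gw_k)\inv + \Dctwo$, valid here because both $P(i\gw_k)$ and $P_S(i\gw_k)$ are assumed boundedly invertible. A parallel ``solve for $y$'' computation on the closed-loop equations arising from the feedback $u=-\Dctwo y$ gives $P_d^S(i\gw_k) := C^S R(i\gw_k,A^S)B_d = (I+P(i\gw_k)\Dctwo)\inv P_d(i\gw_k)$, from which $P_S(i\gw_k)\inv P_d^S(i\gw_k) = P(i\gw_k)\inv P_d(i\gw_k)$ follows. Substituting these two identities into the definition of $\Pipartk$ immediately yields $\Pipartk = \Pipartkalt + \Dctwo\yrefk$, and hence the stated formula for $\Pitransktwo$. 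The formula for $\Pitranskone$ is then verified by checking
\[
(i\gw_k-A^S)\bigl[R(i\gw_k,A)B_d\wdistk + R(i\gw_k,A)B\Pipartkalt\bigr] = B_d\wdistk + B^S\Pipartk
\]
as an equality in $X_{-1}$. Splitting $i\gw_k-A^S = (i\gw_k-A) + B\Dctwo Q_1^S\CL$ and using the regularity identities $\CL R(i\gw_k,A)B = P(i\gw_k)-D$ and $\CL R(i\gw_k,A)B_d = P_d(i\gw_k)$, together with $Q_2^S\Dctwo = \Dctwo Q_1^S$, $I-\Dctwo Q_1^S D = Q_2^S$, and $\yrefk = P(i\gw_k)\Pipartkalt + P_d(i\gw_k)\wdistk$, reduces the right-hand side to the desired $B_d\wdistk + B^S\Pipartk$; applying $R(i\gw_k,A^S)$ to both sides (legitimate by Assumption~\ref{ass:Pkinv}) gives the claim.

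For part~(ii), observe that $D^S=Q_1^S D$ inherits bounded invertibility from $D$. I would apply Lemma~\ref{lem:Woodbury} to the regular system $\PARsysS$ with $Q=(D^S)\inv$; since $Q\inv + C^S R(i\gw_k,A^S)B^S = P_S(i\gw_k)$ is boundedly invertible, the formula yields
\[
R_k^D = R(i\gw_k,A^S) - R(i\gw_k,A^S)B^S P_S(i\gw_k)\inv C^S R(i\gw_k,A^S).
\]
Substituting the definition of $\Pipartk$ into $\Pitranskone$ and collecting the coefficients of $\yrefk$ and $\wdistk$ then produces $R(i\gw_k,A^S)B^S P_S(i\gw_k)\inv \yrefk + R_k^D B_d\wdistk$.

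The only real obstacle is book-keeping in the extrapolation spaces: since $B$ and $B_d$ take values in $X_{-1}$, the identity to be verified in part~(i) lives in $(X^S)_{-1}$ and one must check that $R(i\gw_k,A)B\Pipartkalt + R(i\gw_k,A)B_d\wdistk$ actually lies in $\Dom(\CL)$ so that $A^S$ acts on it. Both follow from the admissibility of $B,B_d$ and the regularity of $(A,B,C,D)$ under the pre-stabilizing feedback $\Dctwo$, so no new technique is needed.
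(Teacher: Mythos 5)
Your proposal is correct and follows exactly the route the paper indicates: the paper gives no detailed proof of Lemma~\ref{lem:Pitransalt}, stating only that the identities ``can be verified using standard operator identities and the Woodbury formula,'' and your computation (the relations $P_S(i\gw_k)\inv = P(i\gw_k)\inv+\Dctwo$ and $P_S(i\gw_k)\inv P_d^S(i\gw_k)=P(i\gw_k)\inv P_d(i\gw_k)$ for the first part, and Lemma~\ref{lem:Woodbury} applied to $\PARsysS$ with $Q=(D^S)\inv$ for the second) is precisely that verification, carried out correctly.
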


The following result shows that pointwise convergence is achieved for sufficiently smooth signals $\yref(t)$ and $\wdist(t)$ and for suitable intial states. 

\begin{proposition}
  \label{prop:ContrInfPointwise}
  Assume $\yref(t)$ and $\wdist(t)$
  are such that
    $( \gw_k\Pitranskone )_k\in \lp[1](X)$ and
    $(\gw_k \Pitransktwo )_k\in \lp[2](Y)$. 
    If 
 $x_{e0}\in X_e$ and $A_ex_{e0}+B_e\distref(0)\in X_e$, then the regulation error in Theorem~\textup{\ref{thm:ContrConstInf}} satisfies 
$\norm{e(t)}\to 0$ as $t\to \infty$.  
  If the closed-loop system is exponentially stable, 
  then
there exist $M_e,\ga>0$ 
such that
    \eq{
    \norm{e(t)}\leq M_ee^{-\ga t}(\norm{A_ex_{e0}+B_e\distref(0)}+1)
  }
for all $x_{e0}\in X_e$ satisfying $A_ex_{e0}+B_e\distref(0)\in X_e$.  
\end{proposition}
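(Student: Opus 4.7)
The plan is to derive the pointwise decay of $e(t)$ by applying Lemma~\ref{lem:RegErrForm}, which expresses the regulation error in the closed form $e(t) = \CeL T_e(t) A_e\inv (A_e x_{e0} + B_e \distref(0) - \qext)$ for almost every $t > 0$. Two hypotheses of that lemma must be verified: the existence of a sequence $(f_k)_k \in \lp[2](\C)$ making the associated operator $H$ bounded into $\Dom(\CeL)$, and the convergence of the series $\qext = \sum_{k \in \SIndset} i\gw_k R(i\gw_k, A_e) B_e \distrefk$ in $X_e$. The first hypothesis is already established inside the proof of Theorem~\ref{thm:ContrConstInf}, so the only genuinely new work lies in the second, which is the main technical step here.

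For the convergence of $\qext$, I would exploit the explicit formula $R(i\gw_k, A_e) B_e \distrefk = (\Pitranskone, z_k)$ extracted from the proof of Theorem~\ref{thm:ContrConstInf}, where $z_k \in Z = \lp[2](\SIndset; Y)$ is supported only at its $k$-th coordinate with value $\Pitransktwo$. Multiplying by $i\gw_k$ and summing, the $X$-component of $\qext$ becomes $\sum_k i\gw_k \Pitranskone$, which converges in $X$ by the assumption $(\gw_k \Pitranskone)_k \in \lp[1](X)$. The $Z$-component is the sequence $(i\gw_k \Pitransktwo)_{k \in \SIndset}$, whose squared $\lp[2]$-norm equals $\sum_k \gw_k^2 \norm{\Pitransktwo}^2$ and is finite by the assumption $(\gw_k \Pitransktwo)_k \in \lp[2](Y)$. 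Hence $\qext \in X_e$ and Lemma~\ref{lem:RegErrForm} is applicable.

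With $\psi := A_e x_{e0} + B_e \distref(0) - \qext \in X_e$, the inclusion $i\R \subset \rho(A_e)$ established in Theorem~\ref{thm:ContrConstInf} forces $0 \in \rho(A_e)$, so that $A_e\inv \in \Lin(X_e)$ and $\CeL A_e\inv \in \Lin(X_e, Y)$. Since $A_e\inv$ commutes with $T_e(t)$ on $X_e$, I obtain the estimate $\norm{e(t)} \leq \norm{\CeL A_e\inv} \norm{T_e(t) \psi}$. The strong stability of $T_e(t)$ then yields $\norm{e(t)} \to 0$ as $t \to \infty$, proving the first claim. For the exponentially stable case I would use $\norm{T_e(t) \psi} \leq M e^{-\ga t} \norm{\psi}$ together with the bound $\norm{\psi} \leq \norm{A_e x_{e0} + B_e \distref(0)} + \norm{\qext}$; because $\norm{\qext}$ depends only on the prescribed reference and disturbance signals, it can be absorbed into the overall constant to produce the claimed exponential estimate.
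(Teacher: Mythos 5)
Your proposal is correct and follows essentially the same route as the paper: verify $\qext\in X_e$ from the explicit structure $R(i\gw_k,A_e)B_e\distrefk=(\Pitranskone,z_k)$ together with the $\lp[1]$ and $\lp[2]$ assumptions, invoke the $H$-properties already established in the proof of Theorem~\ref{thm:ContrConstInf}, and conclude via Lemma~\ref{lem:RegErrForm}. The only difference is that you spell out the final estimate $\norm{e(t)}\leq\norm{\CeL A_e\inv}\norm{T_e(t)\psi}$, which the paper leaves implicit (it appears explicitly in the proof of Proposition~\ref{prop:ContrFinPointwise}).
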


\begin{proof}
  As in the proof of Theorem~\ref{thm:ContrConstInf},  $R(i\gw_k,A_e)B_e \distrefk =(\Pitranskone,z_k)$ where $z_k=(z_k^j)_j$ is such that $z_k^k=\Pitransktwo$ and $z_k^j=0$ for $j\neq k$. 
  This structure, 
    $( \gw_k\Pitranskone )_k\in \lp[1](X)$, and
    $(\gw_k \Pitransktwo )_k\in \lp[2](Y)$ imply
    that $\qext$ in~\eqref{eq:qextdef} satisfies $\qext\in X_e$.
  Since the required properties of $H$ were verified in
  the proof of Theorem~\ref{thm:ContrConstInf}, 
   the claims follow from Lemma~\ref{lem:RegErrForm}.
\end{proof}

\subsection{Non-Uniform Convergence Rates of the Regulation Error}
\label{sec:ContrNonuniform}

We will now use Theorem~\ref{thm:CLstabnonuniform} to derive convergence rates for the regulation error
in Theorem~\ref{thm:ContrConstInf}. The estimates are valid for reference and disturbance signals with sufficient levels of smoothness. In particular, we assume $\set{\gw_k}_{k\in\SIndset}$ has a uniform gap and
the coefficients of $\yref(t)$ and $\wdist(t)$  satisfy
  \eqn{
  \label{eq:RORPnonunifEFcond}
    \left( \gw_k\Pitranskone \right)_{k\in\SIndset}\in \lp[1](X), \qquad
    \left( \gw_k\Pitransktwo \right)_{k\in\SIndset}\in \lp[2](Y),
  } 
  which is a strictly stronger condition than the first two parts of~\eqref{eq:RORPEFcondShapen}.

\begin{theorem}
  \label{thm:ContrConstNonunif}
  Assume
  $\PARsysS$ is passive and exponentially stable, the controller is as in Definition~\textup{\ref{def:contrinfdim}},
  and the conditions of Theorem~\textup{\ref{thm:ContrConstInf}} are satisfied.

Assume there exists $0<\eps<\frac{1}{2}\inf_{k\neq l}\abs{\gw_k-\gw_l}$ 
such that $\re P_S(i\gw)>0$ for all $\gw \in \Omega_\eps = \setm{\gw\in \R}{\exists k\in\SIndset :\abs{\gw-\gw_k}<\eps}$.
Let $\eta(\cdot),\gg(\cdot):\R_+\to (0,1]$ be continuous  non-increasing 
functions 
with the property $\inf_{\gw>0}\gg(\gw+\gd_0)/\gg(\gw)>0$ for some  $0<\gd_0<\min \set{1,\eps}$
such that the following hold.
\begin{itemize}
  \item $\re P_S(i\gw)\geq \eta(\abs{\gw})$
      for all $\gw\in \Omega_\eps$.
  \item $\norm{\Bck^\ast y}\geq \gg(\abs{\gw_k}) \norm{y}$ for all $k\in\SIndset$ and $y\in Y$.
\end{itemize}
    Then the controller solves the robust output regulation problem and there exists 
    $M_0>0$ such that $\norm{R(i\gw,A_e)}\leq M_R(\abs{\gw})$  with $M_R(\cdot)=M_0\eta(\cdot)\inv\gg(\cdot)^{-2}$.  
    If $\sup_{\gw>0} M_R(\gw)<\infty$, then the closed-loop system is exponentially stable. More generally, there exist $M_e^e,t_0\geq 1$ 
 such that
 if~\eqref{eq:RORPnonunifEFcond} hold, then
 for all 
 $x_{e0}\in X_e$ satisfying $A_ex_{e0}+B_e\distref(0)\in X_e$ we have
\eqn{
\label{eq:RORPnonuniformerror}
\int_t^{t+1} \norm{e(s)}ds\leq \frac{M_e^e}{M_T(t)} 
\left( \norm{A_ex_{e0}+B_e\distref(0)} 
+ \Piseqnorm \right), \qquad t\geq t_0,
}
where $M_T(t)$ is determined by parts \textup{(b)--(c)} of Theorem~\textup{\ref{thm:CLstabnonuniform}}
and
$\Piseqnorm^2=\norm{(\gw_k \Pitranskone)}_{\lp[1]}^2 + \norm{(\gw_k\Pitransktwo)_k}_{\lp[2]}^2$.
In particular, if $\eta(\gw)\inv\gg(\gw)^{-2}=O(\gw^\ga)$ for some $\ga>0$, then~\eqref{eq:RORPnonuniformerror}
holds with $M_T(t)=t^{1/\ga}$.
\end{theorem}

\begin{proof}
Theorem~\ref{thm:ContrConstInf} shows that the controller solves the robust output regulation problem, and
$\norm{R(i\gw,A_e)}\leq M_R(\abs{\gw})$ follows from Theorem~\ref{thm:CLstabnonuniform} and Remark~\ref{rem:CLNUunifgap}. Thus~\eqref{eq:RORPnonuniformstate} holds $M_T(\cdot)$ and for some $M_e,t_0>0$. 
As shown in the proofs of Theorem~\ref{thm:ContrConstInf} and Lemma~\ref{prop:ContrInfPointwise}, the conditions of Lemma~\ref{lem:RegErrForm} are satisfied whenever $\yref(t)$ and $\wdist(t)$ are such that~\eqref{eq:RORPEFcondShapen} and~\eqref{eq:RORPnonunifEFcond} hold. 
If $x_{e0}\in X_e$ is such that $A_ex_{e0}+B_e\distref(0)\in X_e$, then 
\ieq{
  e(t) = \CeL T_e(t)A_e\inv (A_ex_{e0}+B_e\distref(0)-\qext).
}
The admissibility of $\CeL$ and~\eqref{eq:RORPnonuniformstate} imply 
\eq{
  \int_t^{t+1} \norm{e(s)}ds
  &\lesssim \norm{T_e(t)A_e\inv (A_ex_{e0}+B_e\distref(0)-\qext)} \\
  &\leq 
   \frac{M_e^e}{M_T(t)} 
 \left( \norm{ A_ex_{e0}+B_e\distref(0)}+ \norm{\qext} \right),
}
which implies the claim since 
$\norm{\qext}^2\leq \Piseqnorm^2$.
\end{proof}

  If $C\in \Lin(X,Y)$ and $\Cc\in \Lin(Z,U)$ in Theorem~\ref{thm:ContrConstNonunif}, then~\eqref{eq:RORPnonuniformerror} can be replaced with a pointwise rate 
  \ieq{
    \norm{e(t)}\leq 
      \frac{M_e^e}{M_T(t)} 
    \left( \norm{A_ex_{e0}+B_e\distref(0)} 
      + \Piseqnorm \right)
  }
  for $t\geq t_0$.
  If $\wdist(0)=0$ and $B_c\in \Lin(Z,U)$, then Lemma~\ref{lem:ContrAeBedomSuff} gives a sufficient condition for
  initial states $z_0\in Z$ that achieve the convergence rate~\eqref{eq:RORPnonuniformerror}.

The following result presents necessary conditions for exponential closed-loop stability
with controllers satisfying the conditions~\eqref{eq:Gconds}, which in turn are necessary for robustness by~\citel{Pau17b}{Thm. 13}.

\begin{proposition}
  \label{prop:CLexpnecess}
  Assume $\PARsysS$ is strongly stable, $\set{i\gw_k}_{k\in\SIndset}\subset \rho(A^S)$, and
  $\PARcontr$ 
  satisfies~\eqref{eq:Gconds}.
  If the closed-loop system is exponentially stable, then
  $\sup_{k\in\SIndset}\norm{P_S(i\gw_k)\inv}<\infty$.
\end{proposition}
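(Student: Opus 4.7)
The plan is to view the closed-loop system, restricted to the single input $\yref$ (set $\wdist\equiv 0$) and single output $u$ (the plant input, equivalently the controller output), as a regular linear subsystem of the closed-loop from Lemma~\ref{lem:CLreg}. With $Q_1=(I+D^S\Dc)\inv$ and $Q_2=(I+\Dc D^S)\inv$, this subsystem has input operator $B_{ref}=\pmatsmall{B^S\Dc Q_1\\\Bc Q_1}$, observation $C_u=\pmatsmall{-\Dc Q_1 C_\Lambda^S & Q_2\CcL}$, feedthrough $D_u=\Dc Q_1$, and transfer function $H_u(\gl)=C_u R(\gl,A_e)B_{ref}+D_u$. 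Exponential closed-loop stability together with the Gearhart--Pr\"uss Theorem gives $\sup_{\gw\in\R}\norm{R(i\gw,A_e)}<\infty$, and admissibility of $B_{ref}$ and $C_u$ for the exponentially stable semigroup $T_e(t)$ upgrades this to $M:=\sup_{\gw\in\R}\norm{H_u(i\gw)}<\infty$ by the standard bound on transfer functions of exponentially stable regular linear systems.

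The core step is to identify $H_u(i\gw_k)$ as a right inverse of $P_S(i\gw_k)$. Fix $k\in\SIndset$ and $y\in Y$, and set $(x,z)^T:=R(i\gw_k,A_e)B_{ref}y\in X_e$ together with $u:=H_u(i\gw_k)y=C_u(x,z)^T+D_u y$. The resolvent equation $(i\gw_k-A_e)(x,z)^T=B_{ref}y$, read in the extrapolation space, has a first row which, together with $i\gw_k\in\rho(A^S)$ and the regularity of $\PARsysS$, gives $x=R(i\gw_k,A^S)B^S u$; hence $C_\Lambda^S x=(P_S(i\gw_k)-D^S)u$ and $y_0:=C_\Lambda^S x+D^S u=P_S(i\gw_k)u$. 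Using the Woodbury-type identities $I-D^S\Dc Q_1=Q_1$, $D^S Q_2=Q_1 D^S$, and $D^S\Dc Q_1+Q_1=I$, a direct manipulation of the definition of $u$ yields $Q_1(C_\Lambda^S x+D^S\CcL z-y)=y_0-y$, and the second row of the resolvent equation therefore collapses to
\eq{
(i\gw_k-\Ac)z=-\Bc Q_1(C_\Lambda^S x+D^S\CcL z-y)=-\Bc(y_0-y).
}
Consequently $\Bc(y_0-y)=(\Ac-i\gw_k)z\in\ran(i\gw_k-\Ac)\cap\ran(\Bc)=\set{0}$ by~\eqref{eq:Gconds1}, and the injectivity of $\Bc$ from~\eqref{eq:Gconds2} forces $y_0=y$, i.e.\ $P_S(i\gw_k)H_u(i\gw_k)y=y$.

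Since $y\in Y$ was arbitrary, $H_u(i\gw_k)$ is a bounded right inverse of $P_S(i\gw_k)$. Under Assumption~\ref{ass:Pkinv} the operator $P_S(i\gw_k)$ is boundedly invertible, so $H_u(i\gw_k)=P_S(i\gw_k)\inv$, and therefore $\norm{P_S(i\gw_k)\inv}=\norm{H_u(i\gw_k)}\leq M$ uniformly in $k\in\SIndset$, as claimed. The delicate point of the argument is the algebraic reduction in the second paragraph: the resolvent equation lives in $(X_e)_{-1}$, so one must track the $\Lambda$-extensions $C_\Lambda^S$ and $\CcL$ carefully and exploit the Woodbury-type identities among $Q_1,Q_2,D^S,\Dc$ to recast the second row of $(i\gw_k-A_e)(x,z)^T=B_{ref}y$ into the clean internal-model form $(i\gw_k-\Ac)z=-\Bc(y_0-y)$; once this identity is in place, the conditions~\eqref{eq:Gconds} close the argument immediately.
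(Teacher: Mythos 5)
Your proof is correct and rests on the same two pillars as the paper's: exponential closed-loop stability makes a suitable closed-loop transfer function uniformly bounded on $i\R$, and the internal model conditions~\eqref{eq:Gconds} identify that transfer function at the frequencies $i\gw_k$ with $P_S(i\gw_k)\inv$ modulo feedthrough. The difference is the choice of input/output pair and the resulting bookkeeping. The paper uses the pair $B_e^0=\left[ 0\atop \Bc \right]$, $C_e^0=[0,\CcL]$, whose admissibility comes from the proof of Lemma~\ref{lem:CLreg}, and reads off $C_e^0R(i\gw_k,A_e)B_e^0=\CcL S_A(i\gw_k)\inv\Bc$ from the Schur-complement formula already available from the proof of Theorem~\ref{thm:CLstabstr}; the single relation $(i\gw_k-\Ac)z=\Bc(y-P_{cl}(i\gw_k)\CcL z)$ for $z=S_A(i\gw_k)\inv\Bc y$ then yields $\CcL S_A(i\gw_k)\inv\Bc=P_{cl}(i\gw_k)\inv=P_S(i\gw_k)\inv+\Dcone$ with almost no computation. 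You instead take the physically meaningful reference-to-control-input map $H_u$, which costs you the $Q_1,Q_2$ push-through algebra needed to collapse the second resolvent row to $(i\gw_k-\Ac)z=-\Bc(y_0-y)$ (your identities $I-D^S\Dc Q_1=Q_1$, $D^SQ_2=Q_1D^S$, and $D^S\Dc Q_1=I-Q_1$ are all correct), but buys the cleaner conclusion $H_u(i\gw_k)=P_S(i\gw_k)\inv$ without a feedthrough correction, together with the transparent interpretation that the controller must synthesize the feedforward input $P_S(i\gw_k)\inv\yrefk$. Both arguments invoke Assumption~\ref{ass:Pkinv} in the same way to upgrade ``right inverse'' to ``inverse'', and both read $\ran(i\gw_k-\Ac)$ in~\eqref{eq:Gconds1} as the range of the extension acting on $\ZG$. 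One cosmetic point: the implication ``exponential stability implies a uniform resolvent bound on $i\R$'' is the elementary direction, not the Gearhart--Pr\"uss Theorem.
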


\begin{proof}
  It follows from the proof of
   Lemma~\ref{lem:CLreg} that $B_e^0=\left[ 0\atop \Bc \right]$ and $C_e^0=[0,\CcL]$ are admissible with respect to $A_e$. 
  The proof of Theorem~\ref{thm:CLstabstr} implies
  $C_e^0 R(i\gw_k,A_e) B_e^0 = \CcL S_A(i\gw_k)\inv \Bc$ where $S_A(i\gw_k)=i\gw_k-\Ac+\Bc P_{cl}(i\gw_k)\CcL$ and $P_{cl}(i\gw_k)=P_S(i\gw_k)(I+\Dcone P_S(i\gw_k))\inv $.  
  Since the closed-loop system is exponentially stable, we must have
  \eqn{
    \label{eq:KSAeGfin}
    \sup_{k\in\SIndset}\;\norm{\CcL S_A(i\gw_k)\inv \Bc} < \infty.
  }
  Let $y\in Y$ and denote $z=S_A(i\gw_k)\inv \Bc y\in \ZG$, which implies $(i\gw_k-\Ac)z  =\Bc (y-P_{cl}(i\gw_k)\CcL z)$.
  The conditions~\eqref{eq:Gconds} show that we must have
  $y=P_{cl}(i\gw_k)\CcL z$.
  Thus $\CcL S_A(i\gw_k)\inv \Bc y = P_{cl}(i\gw_k)\inv y = (P_S(i\gw_k)\inv + \Dcone)y$ for all $y\in Y$, and the claim follows from~\eqref{eq:KSAeGfin}.
\end{proof}

\section{Examples}
\label{sec:examples}

\subsection{A Wave Equation with Boundary Control}

We consider a one-dimen\-sion\-al undamped wave equation with boundary control and observation,
  \begin{subequations}
    \label{eq:waveex}
    \eqn{
      w_{tt}(\xi,t)&=w_{\xi\xi}(\xi,t), \qquad \xi\in(0,1)\\
      w_\xi(\xi,0)&=w_0(\xi), \quad w_t(\xi,0)=w_1(\xi),\\
      u(t)&=-w_\xi(0,t), \quad w_\xi(1,t)=0,\\
      y(t)&= w_t(0,t).
    }
  \end{subequations}
  The results in~\cite{ZwaLeg10} show that~\eqref{eq:waveex} defines a regular linear system with state $x(t)=(w_\xi(\cdot,t),w_t(\cdot,t))^T$ on $X=\Lp[2](0,1)\times \Lp[2](0,1)$. Its transfer function is given by
  \eq{
    P(\gl) = \frac{1+e^{-2\gl}}{1-e^{-2\gl}}, \qquad \gl\neq i\pi k, \qquad k\in\Z
  }
  and $D=1$. In particular, we have $\re P(\gl)\geq 0$ for all $\gl\in\C_+$. 
  We will construct a controller that achieves exponential closed-loop stability and robust output regulation for 1-periodic signals of the form
  \ieq{
    \yref(t) = \sum_{k\in\Z} \yrefk e^{i 2\pi k t}
  }
  with $(\yrefk)_k\in\lp[1](\C)$.
  For this we will use a controller based on the transport equation presented in Section~\ref{sec:Contrtransport} with $\tau = 1$.

  The system~\eqref{eq:waveex} can be stabilized exponentially with negative output feedback $u(t)=-\Dctwo y(t)$ with $\Dctwo>0$. For $\gl\in \C_+$ the transfer function $P_S(\gl)$ of the stabilized system $\PARsysS$ is given by
  \eq{
    P_S(\gl) = P(\gl)(I+\Dctwo P(\gl))\inv 
    =\frac{1+e^{-2\gl}}{1+\Dctwo + (\Dctwo-1)e^{-2\gl}}
  }
  and $\re P_S(i\gw)=\frac{\Dctwo \cos(\gw)^2}{1+(\Dctwo^2-1)\cos(\gw)^2}$. 
  Now $\re P_S(i\gw)=0$ if and only if $\gw = (k+1/2)\pi$ for some $k\in\Z$. 
  Therefore for any fixed $0<\eps<\pi/2$ there exists $\eta>0$ such that $\re P_S(i\gw)\geq \eta>0$ for all $\gw\in\Omega_\eps= 
  \setm{\gw\in \R}{\exists k\in\SIndset :\abs{\gw-2\pi k}<\eps}$.

  The conditions of Theorem~\ref{thm:ContrConstTransp} are satisfied, and thus the controller in Definition~\ref{def:contrtransp} solves the robust output regulation problem for all 1-periodic reference signals with $(\yrefk)_k\in\lp[1](\C)$ and the output of the controlled system converges to $\yref(t)$ at an exponential rate.
  The closed-loop system consisting of~\eqref{eq:waveex} and the controller (without the reference signal) becomes
  \eq{
    w_{tt}(\xi,t)&=w_{\xi\xi}(\xi,t), \qquad \xi\in(0,1)\\
    z_t(\xi,t) &= z_\xi(\xi,t), \qquad \xi\in(0,1)\\
    w_\xi(\xi,0)&=w_0(\xi), \quad w_t(\xi,0)=w_1(\xi), \quad z(\xi,t)=z_0(\xi)\\
    w_\xi(0,t) &= (\gb -2^{-1/2})z(0,t) - (\gb+2^{-1/2})z(1,t)  \\
    w_t(0,t) &= 2^{-1/2}(z(0,t)-z(1,t)), \quad w_\xi(1,t)=0
  }
  where $\gb=\Dconeadd + \Dctwo>0$ is arbitrary. By Theorem~\ref{thm:ContrConstTransp} the semigroup $T_e(t)$ associated to this coupled system of partial differential equations is exponentially stable,
    and thus
    $\norm{w_\xi(\cdot,t)}_{\Lp[2]}^2 + \norm{w_t(\cdot,t)}_{\Lp[2]}^2 + \norm{z(\cdot,t)}_{\Lp[2]}^2\to 0$ 
    at an exponential rate as $t\to \infty$.

\subsection{A Strongly Stabilizable Wave Equation}

  In this example we consider another one-dimensional wave equation, now with distributed control and observation,
  \begin{subequations}
    \label{eq:wavestrex}
    \eqn{
      w_{tt}(\xi,t)&=w_{\xi\xi}(\xi,t) + b(\xi)u(t), \qquad \xi\in(0,1)\\
      w(0,t)&=0, \quad w(1,t)=0,\\
      w(\xi,0)&=w_0(\xi), \quad w_t(\xi,0)=w_1(\xi),\\
      y(t)&= \int_0^1 b(\xi)w_t(\xi,t)d\xi,
    }
  \end{subequations}
  where $b(\xi)=2(1-\xi)$. Equation~\eqref{eq:wavestrex} determines a passive linear system
  with state $x(t)=(w(\cdot,t),w_t(\cdot,t))^T$ on $X=H_0^1(0,1)\cap \Lp[2](0,1)$
  with bounded input and output operators satisfying $C=B^\ast$. The transfer function $P(\gl)$ can be computed as in~\cite[Sec. II]{CurMor09}. Negative output feedback $u(t)=-\Dctwo y(t)$ stabilizes the system strongly for any $\Dctwo>0$, but 
  the system is not exponentially stabilizable.
However, the semigroup generated by $A^S$ 
is polynomially stable
since $\int_0^1 b(\xi)\sin(k\pi \xi)d\xi=\frac{2}{k\pi}$ implies $\norm{R(i\gw,A-B\Dctwo C)}=O(\gw^2)$ for $\Dctwo >0$ by~\citel{Rus69}{Thm. 1}.

  Our aim is to design a controller to achieve robust output tracking of 
  \ieq{
    \yref(t) = \sin(\pi t)+\frac{1}{4}\cos(2\pi t).
  }
  The frequencies of the signal $\yref(t)$ are $\set{\pm \pi,\pm 2\pi}$. Due to robustness, the controller will be able to track any reference signal with these frequencies.
  Since $\dim Y=p=1$, we can construct a passive feedback controller in Definition~\ref{def:contrfindim} on $Z=\R^4$ by choosing
  \eq{
    \Ac  = \blkdiag(J_1,J_2), \quad J_1 = \pmat{0&\pi\\-\pi&0}, \quad J_2=\pmat{0&2\pi\\-2\pi&0}, 
  }
  $\Cc =[k_1,0,k_2,0]$, $\Bc=\Cc^\ast$, and $\Dc >0$. 
  The values of $k_1,k_2\in \R$ and $\Dc$ affect the stability properties of the closed-loop system.
  In this example we choose  $k_1=k_2 = 3$ and $\Dc=35$. 
  By construction the controller is robust with respect to perturbations in the system provided that the strong stability of the closed-loop is preserved.
  Since $B$ and $C$ are bounded operators, Proposition~\ref{prop:ContrFinPointwise} shows that $\norm{e(t)}\to 0$ as $t\to \infty$ for all initial states $x_0\in \Dom(A)$ and $z_0\in Z$.

  For simulations, the system~\eqref{eq:wavestrex} was approximated with the Finite Element Method with $N=24$ points on $[0,1]$. Figure~\ref{fig:wave1DexSim} depicts the behaviour of the error $e(t)$ and the integrals $\int_t^{t+1}\norm{e(s)}ds$ for $0\leq t\leq 24$
  for initial states $x_0(\xi)=\xi(1-\xi)(2-5\xi)$ and $z_0=0$. 
   Figure~\ref{fig:wave1DexSim} also plots the solution $w(\xi,t)$ of the controlled wave equation for $0\leq t\leq 6$.

    \begin{figure}[h!]
  \begin{minipage}{.47\linewidth}
      \begin{center}
	\includegraphics[width=1.1\linewidth]{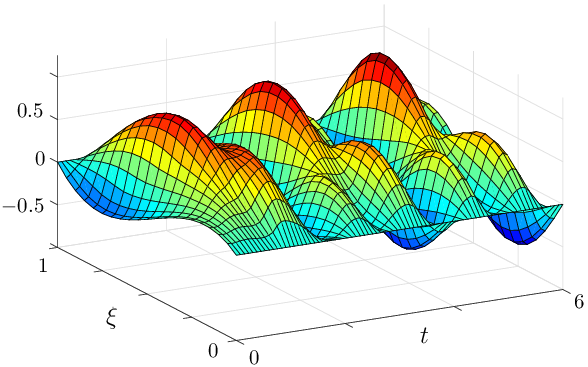}
      \end{center}
  \end{minipage}
  \hfill
  \begin{minipage}{.51\linewidth}
      \begin{flushright}
	\vspace{-1.5ex}

	\includegraphics[width=.9\linewidth]{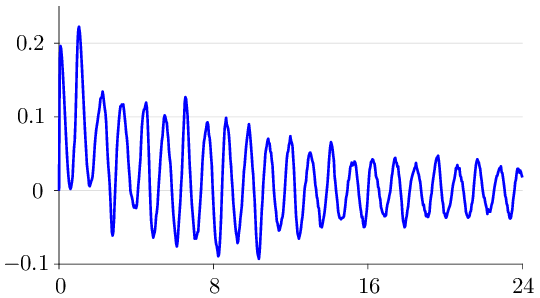}

	\vspace{1ex}

	\includegraphics[width=.878\linewidth]{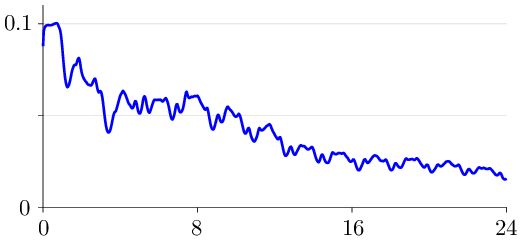}

	\vspace{-1ex}
      \end{flushright}
  \end{minipage}
      \caption{The solution $w(\xi,t)$ of controlled wave equation (left) and $e(t)$ (top right) and $\int_t^{t+1}\norm{e(s)}ds$ (bottom right).}
\label{fig:wave1DexSim}
\end{figure}

\subsection{Periodic Output Tracking for a Heat Equation}

In the final example we consider a two-dimensional boundary controlled heat equation on $\Omega = [0,1]\times [0,1]$
  \begin{subequations}
    \label{eq:heatex}
    \eqn{
      x_t(\xi,t) &= \Delta x(\xi,t), \qquad x(\xi,0)=x_0(\xi) \\
      \pd{x}{n}(\xi,t)\vert_{\Gamma_1} &= u(t), \qquad 
      \pd{x}{n}(\xi,t)\vert_{\Gamma_2} = \wdist(t), \qquad
      \pd{x}{n}(\xi,t)\vert_{\Gamma_0} = 0\\
      y(t) &= \int_{\Gamma_1}x(\xi,t)d\xi,
    }
  \end{subequations}
  where the parts $\Gamma_0$, $\Gamma_1$, and $\Gamma_2$ of the boundary $\partial \Omega$ are defined so that
  $\Gamma_1 = \setm{\xi=(0,\xi_2)}{0\leq \xi_2\leq 1}$,
  $\Gamma_2 = \setm{\xi=(\xi_1,1)}{1/2\leq \xi_1\leq 1}$, 
  $\Gamma_0 = \partial \Omega \setminus (\Gamma_1 \cup \Gamma_2)$. By~\cite[Cor. 2]{ByrGil02} the heat equation defines a regular linear system with state $x(t)=x(\cdot,t)$ on $X=\Lp[2](\Omega)$
  with feedthrough $D=0$. The system is passive,
  \eq{
    P(\gl) = \frac{\coth(\sqrt{\gl})}{\sqrt{\gl}}, \qquad \gl\in \overline{\C_+}\setminus \set{0},
  }
and $\abs{P(i\gw)\inv}=O(\abs{\sqrt{\gw}})$ for $\gw\in\R$ with large $\abs{\gw}$. The system~\eqref{eq:heatex} is exponentially stabilizable with feedback $u(t) = -\Dctwo y(t)$ for any $\Dctwo>0$.

  We will design an infinite-dimensional dynamic feedback controller that achieves robust output tracking of the $2$-periodic nonsmooth reference signal $\yref(t)$ in
  Figure~\ref{fig:heat2DexSim}
  and rejects a suitable class of $2$-periodic disturbance signals $\wdist(t)$. 
  The frequencies of the signals are $\set{\gw_k}_{k\in\Z}$ with $\gw_k = \pi k$ for $k\in\Z $, and the
  Fourier coefficients of $\yref(t)$ are such that  $\abs{\yrefk}=O(\abs{k}^{-3})$.

  We can construct the controller as in Definition~\ref{def:contrinfdim} by choosing $Z=\lp[2](\C)$,  $\Ac  = \diag(i\gw_k)_{k\in\SIndset}$, $\Bc = c((1+\abs{k})^{-1/2-\eps})_{k\in\Z}$ for some small $\eps>0$, $\Cc=\Bc^\ast$, and $\Dcone=0$. The parameters $\eps>0$, $\Dc=\Dctwo>0$ and $c>0$ affect the stability properties of the closed-loop system. 
  Proposition~\ref{prop:CLexpnecess} shows that since $P(\gw_k)\to 0$ as $\abs{k}\to \infty$, the closed-loop system can not be stabilized exponentially.
  However, by  Theorem~\ref{thm:CLstabnonuniform} the closed-loop system consisting of~\eqref{eq:plantfull} and the controller with the above choices of parameters is polynomially stable.
  Indeed, since $\re P_S(i\gw) = O(\abs{\gw}^{-1/2})$ and $\abs{\Bck\inv} = (1+\abs{k})^{1/2+\eps} =O(\abs{\gw_k}^{1/2+\eps})$, we have from Theorem~\ref{thm:ContrConstNonunif}
  that $\norm{R(i\gw,A_e)}=O(\abs{\gw}^{3/2+2\eps})$ and there exist $M_e,t_0>0$ such that 
  \eq{
    \norm{T_e(t)x_{e0}} &\leq \frac{M_e}{t^{1/\ga}}\norm{A_ex_{e0}}, \qquad x_{e0} \in \Dom(A_e), \ t\geq t_0.
  }
  where $\ga = 3/2+2\eps$.

  To verify that the controller is capable of regulating the given signals $\yref(t)$ and $\wdist(t)$, we need to show that the conditions~\eqref{eq:RORPEFcondShapen} are satisfied.
  The norms $\norm{R(i\gw,A)B}$ and $\norm{R(i\gw,A)B_d}$ are uniformly bounded for large $\abs{\gw}$.
  Lemma~\ref{lem:Pitransalt} and $(\Bck^\ast)_k\in\lp[2](\C)$ imply that it is sufficient to show
  \eq{
    ( \abs{\Bck}\inv \abs{P_S(i\gw_k)}\inv(\abs{\yrefk}+\abs{P_d(i\gw_k)} \abs{\wdistk}))_{k\in\Z} \in \lp[2](\C).
  }
  The eigenfunction expansion of $A$ can be used to show $\abs{P_d(i\gw)}=O(\abs{\gw}\inv)$, and
  since $\abs{P(i\gw)\inv}=O(\abs{\gw}^{1/2})$, the above condition is satisfied for all $\yref(t)$ and $\wdist(t)$ with
  \eq{
    ( \abs{k}^{1+\eps} \abs{\yrefk})_{k\in\Z} \in \lp[2](\C) \quad \mbox{and}\quad
(\abs{k}^{\eps} \abs{\wdistk})_{k\in\Z} \in \lp[2](\C).
  }
  The condition on $(\yrefk)_k$ in particular holds for $\yref(t)$ in
  Figure~\ref{fig:heat2DexSim}.

  Finally, 
  we can study the rational rates of decay of $\norm{e(t)}$ using Theorem~\ref{thm:ContrConstNonunif}. 
  The conditions 
  in~\eqref{eq:RORPnonunifEFcond} are both satisfied if
  \eq{
    ( \abs{k}^{2+\eps} \abs{\yrefk})_{k\in\Z} \in \lp[2](\C) \quad \mbox{and}\quad
(\abs{k}^{1+\eps} \abs{\wdistk})_{k\in\Z} \in \lp[2](\C).
  }
  The first condition is satisfied for $\yref(t)$ in 
  Figure~\ref{fig:heat2DexSim}
  whenever $0<\eps<1/2$.
  Then
  for all $x_{e0}\in X_e$ such that $A_ex_{e0}+B_ev_0\in X_e$ we have
  \eqn{
    \label{eq:heatexerrrate}
    \int_t^{t+1}\norm{e(s)} ds &\leq \frac{M_e^e}{t^{1/\ga}}
    \left( \norm{A_ex_{e0}+B_e\distref(0)} 
    + \Piseqnorm \right), \qquad t\geq t_0
  }
  where $\ga = 3/2+2\eps$, and a direct estimates shows that for any fixed $\eps>0$ 
  \eq{
   \Piseqnorm 
   \lesssim \norm{(\abs{k}^{2+\eps}\abs{\yrefk}+\abs{k}^{1+\eps}\abs{\wdistk})}_{\lp[2]}.
  }
  For disturbance signals satisfying $\wdist(0)=0$, Lemma~\ref{lem:ContrAeBedomSuff} shows that~\eqref{eq:heatexerrrate} 
  holds whenever $x_0\in \Dom(A)$ and $z_0\in \Dom(\Ac )$ are such that
    $\Cc z_0  = \Dc(\CL x_0-\yref(0))$.
  Moreover, by Proposition~\ref{prop:ContrInfPointwise} the regulation error satisfies $\norm{e(t)}\to 0$ as $t\to \infty$ for all such initial states.

  For simulations 
  the solution of the controlled heat equation~\eqref{eq:heatex} was approximated with Finite Differences using a $N\times N$ grid with $N=20$. 
The free parameters of the controller were chosen as $\eps=1/10$, $c=8$, and $\Dc=15$.
  The state of the controller was approximated by truncating the infinite matrix $\Ac $ to a $31\times 31$ diagonal matrix with eigenvalues $\set{i \pi k}_{\abs{k}\leq N_S}$ for $N_S=15$.
  Figure~\ref{fig:heat2DexSim} depicts the output of the controlled heat equation for $2\leq t\leq 8$ and the behaviour of the error integrals for $0\leq t\leq 10$
for the initial state $x_0(\xi_1,\xi_2) =-(1+\xi_1^2/4-\xi_1^3/6)(\cos(\pi\xi_2)/10+2)$ such that $x_0\in \Dom(A)$ and an initial state $z_0\in \Dom(\Ac)$ satisfying $\Cc z_0 = \Dc(Cx_0-\yref(0))$.

    \begin{figure}[h!]
  \begin{minipage}{.48\linewidth}
      \begin{center}
	\includegraphics[width=1.1\linewidth]{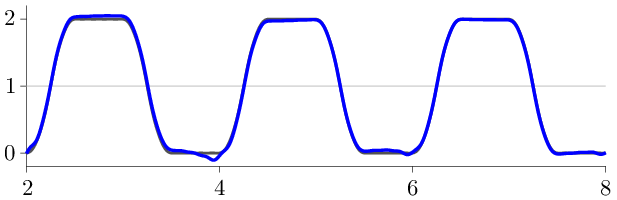}
      \end{center}
  \end{minipage}
  \hfill
  \begin{minipage}{.51\linewidth}
      \begin{flushright}
	\includegraphics[width=.9\linewidth]{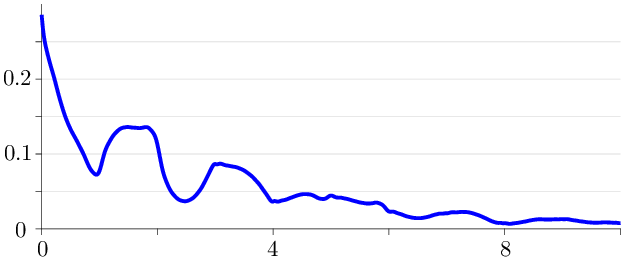}
      \end{flushright}
  \end{minipage}
      \caption{The reference $\yref(t)$ (left, gray), the output $y(t)$ (left, blue), and $\int_t^{t+1}\norm{e(s)}ds$ (right) for the heat equation.}
\label{fig:heat2DexSim}
\end{figure}

\section*{Acknowledgement}
  The author is grateful to Reinhard Stahn for discussions regarding Theorem~\ref{thm:CLstabnonuniform} and to Professor Charles Batty for helpful comments on non-uniform stability of semigroups.

\appendix

\section{}
\label{sec:OpEstimates}

\begin{lemma}
  \label{lem:Repostoinv}
  Let $X$ be a Hilbert space and let $T,S\in \Lin(X)$ be such that $\re T\geq c\geq 0$ and $\re S\geq d\geq 0$.
  \begin{itemize}
    \item[\textup{(a)}]
      If $T$ is boundedly invertible,
      then $\re T\inv \geq c\norm{T}^{-2}$.
      If $c>0$, then $T\inv $ exists and $\norm{T\inv}\leq \frac{1}{c}$.

    \item[\textup{(b)}]  If
      $c>0$ or $d>0$, then 
      $\norm{T(I+ST)\inv}\leq \frac{\norm{T}^2}{c+d \norm{T}^2}$. If $c>0$ and $d\geq 0$, then
      \eq{
	\re T(I +ST)\inv
	\geq \frac{c^3+c^2d \norm{T}^2}{\norm{T}^2(1+c\norm{S})^2}  .
      }
    \item[\textup{(c)}]
      If $T$ is invertible, $c\geq 0$, and $d>0$,
      then $\re T(I+ST)\inv \geq d (\norm{T\inv}+\norm{S})^{-2}$.
    \item[\textup{(d)}] If $c\geq 0$ and $S\geq 0$, then $I+ST$ and $I+TS$ are boundely invertible, and $\re T(I+ST)\inv\geq 0$.
\end{itemize}
\end{lemma}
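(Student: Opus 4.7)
The plan is to derive all four parts from direct numerical-range (quadratic form) estimates, handling invertibility and the inequalities in a uniform way.

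For (a), I would use $\re\iprod{Tx}{x}\geq c\norm{x}^2$ together with the trivial bound $\norm{Tx}\norm{x}\geq \re\iprod{Tx}{x}$ to obtain $\norm{Tx}\geq c\norm{x}$; the identity $\re T^\ast=\re T$ shows the same holds for $T^\ast$, so when $c>0$ the operator $T$ is bounded below and has dense range, hence is boundedly invertible with $\norm{T\inv}\leq 1/c$. Substituting $x=Ty$ in $\re\iprod{T\inv x}{x}=\re\iprod{y}{Ty}\geq c\norm{y}^2$ and using $\norm{y}\geq \norm{x}/\norm{T}$ yields $\re T\inv \geq c\norm{T}^{-2}$.

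For (b) and (c) the core identity is $T(I+ST)\inv=(T\inv+S)\inv$ whenever $T$ is invertible. I would first establish invertibility of $I+ST$ in each case: if $c>0$, then $T$ is invertible by (a) and $\re(T\inv+S)\geq c\norm{T}^{-2}+d>0$ makes $T\inv+S$ invertible by (a); if instead $d>0$, a symmetric argument applies to $I+ST=S(S\inv+T)$. For the norm bound in (b), a single numerical-range calculation covers both cases: setting $y=(I+ST)\inv x$ so that $x=y+STy$, compute
\eq{
  \re\iprod{T(I+ST)\inv x}{x}=\re\iprod{Ty}{y}+\re\iprod{Ty}{STy}\geq c\norm{y}^2+d\norm{Ty}^2,
}
where the second term uses $\re\iprod{u}{Su}=\re\iprod{Su}{u}\geq d\norm{u}^2$ with $u=Ty$. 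Cauchy--Schwarz on the left together with $\norm{y}\geq \norm{Ty}/\norm{T}$ then gives $\norm{Ty}\norm{x}\geq (c\norm{T}^{-2}+d)\norm{Ty}^2$, which is the claimed $\norm{T(I+ST)\inv}\leq \norm{T}^2/(c+d\norm{T}^2)$. For the second estimate in (b), I would apply (a) to $T\inv+S$, combining $\re(T\inv+S)\geq c\norm{T}^{-2}+d$ with $\norm{T\inv+S}\leq \norm{T\inv}+\norm{S}\leq (1+c\norm{S})/c$. Part (c) follows the same template: $T$ invertible and $d>0$ give $\re(T\inv+S)\geq d>0$, and (a) then produces $\re(T\inv+S)\inv\geq d(\norm{T\inv}+\norm{S})^{-2}$.

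Part (d) is the main obstacle, since there is no longer any strict positivity of $\re T$ or $\re S$ to drive invertibility of $I+ST$ directly. My plan is to exploit $S\geq 0$ by passing to the nonnegative square root $S^{1/2}$ and considering the symmetrized operator $M:=I+S^{1/2}TS^{1/2}$. Its real part satisfies $\re M=I+S^{1/2}(\re T)S^{1/2}\geq I$, so $M$ is invertible by (a). The standard identity $\gs(AB)\setminus\set{0}=\gs(BA)\setminus\set{0}$ applied with $A=S^{1/2}$, $B=S^{1/2}T$ gives $\gs(ST)\setminus\set{0}=\gs(S^{1/2}TS^{1/2})\setminus\set{0}$, so $-1\notin\gs(ST)$ and $I+ST$ is invertible; an analogous application with $A=TS^{1/2}$, $B=S^{1/2}$ handles $I+TS$. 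Finally, the nonnegativity $\re T(I+ST)\inv\geq 0$ drops out of the numerical-range identity displayed above by taking $d=0$ and $c\geq 0$, since $\re\iprod{Ty}{STy}=\iprod{S(Ty)}{Ty}\geq 0$ by $S\geq 0$.
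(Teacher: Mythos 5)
Your proof is correct. Parts (a) and (c) follow essentially the paper's own route: the numerical-range estimate $\norm{Tx}\norm{x}\geq\re\iprod{Tx}{x}\geq c\norm{x}^2$ and the identity $T(I+ST)\inv=(T\inv+S)\inv$ combined with part (a). Where you genuinely diverge is in (b) and (d). For (b) the paper splits into cases, applying (a) to $T\inv+S$ when $c>0$ and invoking an external argument (Curtain--Weiss) to get the bound $1/d$ when $d>0$; your single quadratic-form computation $\re\iprod{Ty}{x}\geq c\norm{y}^2+d\norm{Ty}^2$ with $y=(I+ST)\inv x$ is self-contained, treats both cases simultaneously, and yields the full stated constant $\norm{T}^2/(c+d\norm{T}^2)$ in one stroke. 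For (d) the paper argues by contradiction: it asserts density of $\ran(I+ST)$ (which itself needs a small adjoint argument that is not spelled out) and then shows $I+ST$ is bounded below via $\re\iprod{(I+ST)x_n}{Tx_n}\geq\norm{S^{1/2}Tx_n}^2$ for a minimizing sequence; your symmetrization $M=I+S^{1/2}TS^{1/2}$ with $\re M\geq I$, combined with $\gs(AB)\setminus\set{0}=\gs(BA)\setminus\set{0}$, reduces invertibility to part (a) and sidesteps the density question entirely. Both mechanisms are sound; the paper's sequence argument has the advantage that it transfers directly to the places where it is reused later (Lemma~\ref{lem:IDPinvert} and the end of the proof of Theorem~\ref{thm:CLstabnonuniform}), while yours is the cleaner self-contained statement at the level of bounded operators.
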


\begin{proof}
  (a): The proof of the first part is elementary and latter claims follow from the estimate
  \ieq{
    \norm{Tx}\norm{x}\geq \abs{\iprod{Tx}{x}}\geq \re \iprod{Tx}{x}\geq c \norm{x}^2
  }
  for $x\in X$.

  (b): If $c>0$, we can use part~(a) and $T(I+ST)\inv = (T\inv +S)\inv$.
  If $d>0$, then an argument similar to the one used in~\cite[Lem. 2.3]{CurWei06} shows that $\norm{T(I+ST)\inv}\leq \frac{1}{d}$.

  (c): The claim follows from $T(I+ST)\inv = (T\inv + S)\inv$ and part~(a).

  (d):
  Assume $\re T\geq 0$ and $S\geq 0$. 
The invertibility of $I+ST$ implies that also $I+TS$ is invertible.
It is straightforward to show that the range of $I+ST$ is dense in $X$.
Thus it suffices to show that $I+ST$ is lower bounded.
If this is not true there exists a sequence
 $(x_n)_n\subset X$ such that $\norm{x_n}=1$ for all $n\in\N$ and $\norm{(I+ST)x_n}\to 0$ as $n\to \infty$.
Then 
\ieq{
0\from \re \iprod{(I+ST)x_n}{Tx_n} 
\geq   \norm{S^{1/2} Tx_n}^2,
}
and further $\norm{STx_n}\to 0$ as $n\to \infty$.
However, since $\norm{x_n}=1$, we would then have $\norm{(I+ST)x_n}\not\to 0
$ as $n\to \infty$,  which is a contradiction.
Finally, the proof of $\re T(I+ST)\inv\geq 0$ is elementary.
\end{proof}

\begin{lemma}
  \label{lem:IDPinvert}
  Let 
  $P(\cdot):\overline{\C_+} \to \Lin(Y)$
  be such that  $\re P(\gl)\geq 0$ for all $\gl\in \overline{\C_+}$ and let $\Dc \geq 0$. Then $-1\in\rho(\Dc P(\gl))$ for all $\gl\in \overline{\C_+}$.
  If $\sup_{\gl\in \overline{\C_+}}\norm{P(\gl)}<\infty$,
  then in addition
  $\sup_{\gl\in \overline{\C_+}}\norm{(I+\Dc P(\gl))\inv}<\infty$.
\end{lemma}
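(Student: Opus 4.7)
The plan is to deduce both claims from Lemma~\ref{lem:Repostoinv}. For the first assertion, I would apply part~(d) of that lemma directly with $T = P(\gl)$ and $S = \Dc$: the hypotheses $\re P(\gl)\geq 0$ and $\Dc\geq 0$ immediately yield that $I+\Dc P(\gl)$ is boundedly invertible, which is exactly $-1\in\rho(\Dc P(\gl))$. This step is a one-line verification.

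The second assertion requires the additional uniform estimate, and the main idea is to symmetrize around $\Dc$ using its positive square root. Concretely, I would factor $\Dc = \Dc^{1/2}\Dc^{1/2}$ and apply the operator identity $(I+AB)\inv = I - A(I+BA)\inv B$ with $A=\Dc^{1/2}$ and $B=\Dc^{1/2}P(\gl)$ (which is legitimate once we verify that the middle factor is invertible). This gives
\eq{
(I+\Dc P(\gl))\inv = I - \Dc^{1/2}\bigl(I+\Dc^{1/2}P(\gl)\Dc^{1/2}\bigr)\inv \Dc^{1/2}P(\gl).
}
The key observation is that $\re\bigl(\Dc^{1/2}P(\gl)\Dc^{1/2}\bigr) = \Dc^{1/2}(\re P(\gl))\Dc^{1/2}\geq 0$, and therefore $\re\bigl(I+\Dc^{1/2}P(\gl)\Dc^{1/2}\bigr)\geq I$. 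Lemma~\ref{lem:Repostoinv}(a) applied with $c=1$ then guarantees both that the middle factor is boundedly invertible (justifying the identity) and that its inverse has norm at most $1$, uniformly in $\gl\in\overline{\C_+}$.

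Combining these gives the uniform estimate
\eq{
\norm{(I+\Dc P(\gl))\inv} \leq 1 + \norm{\Dc}\cdot \sup_{\gl\in\overline{\C_+}}\norm{P(\gl)},
}
and the finiteness of the right-hand side under the hypothesis $\sup_{\gl\in\overline{\C_+}}\norm{P(\gl)}<\infty$ completes the proof. I do not anticipate any real obstacle: the whole argument is a factorization trick to move $\Dc$ to both sides of $P(\gl)$, where positivity forces the real part to be $\geq I$ and hence the inverse to be a contraction.
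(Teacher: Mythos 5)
Your argument is correct, and for the second assertion it takes a genuinely different route from the paper. The paper proves the uniform bound by contradiction: assuming no uniform lower bound, it picks sequences $(\gl_n)\subset\overline{\C_+}$ and unit vectors $(u_n)$ with $\norm{(I+\Dc P(\gl_n))u_n}\to 0$, observes that $\re\iprod{(I+\Dc P(\gl_n))u_n}{P(\gl_n)u_n}\geq\norm{\Dc^{1/2}P(\gl_n)u_n}^2$ forces $\norm{\Dc P(\gl_n)u_n}\to 0$, and derives the contradiction $\norm{(I+\Dc P(\gl_n))u_n}\to 1$. Your proof replaces this with the factorization identity $(I+\Dc P(\gl))\inv=I-\Dc^{1/2}(I+\Dc^{1/2}P(\gl)\Dc^{1/2})\inv\Dc^{1/2}P(\gl)$ together with the observation that $\re(\Dc^{1/2}P(\gl)\Dc^{1/2})=\Dc^{1/2}(\re P(\gl))\Dc^{1/2}\geq 0$, so that Lemma~\ref{lem:Repostoinv}(a) with $c=1$ makes the middle inverse a contraction. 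Both proofs exploit the same underlying mechanism (moving a square root of $\Dc$ onto $P(\gl)$), but yours is constructive and yields the explicit uniform constant $1+\norm{\Dc}\sup_{\gl\in\overline{\C_+}}\norm{P(\gl)}$, whereas the paper's contradiction argument produces no explicit bound; as a bonus, your identity also re-establishes the invertibility of $I+\Dc P(\gl)$ directly, without invoking Lemma~\ref{lem:Repostoinv}(d). The only point worth stating explicitly is the standard fact that invertibility of $I+BA$ implies that of $I+AB$ (verified by multiplying out the claimed inverse), which you correctly flag as the step that legitimizes the identity.
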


\begin{proof}
   The property that $-1\in \rho(\Dc P(\gl))$ for all $\gl\in \overline{\C_+}$ follows from Lemma~\ref{lem:Repostoinv}(d). 
Assume $\sup_{\gl\in \overline{\C_+}}\norm{P(\gl)}<\infty$.
In order to show that $(I+\Dc P(\gl))\inv $ are uniformly bounded for $\gl\in \overline{\C_+}$ it is sufficient to show that there exists a constant $r>0$ such that $\norm{(I+\Dc P(\gl))u}\geq r \norm{u}$ for all $u\in U $ and $\gl \in \overline{\C_+}$. 
If no such $r>0$ exists, we can choose sequences $(\gl_n)_n\subset \overline{\C_+}$ and $(u_n)_n\subset U$ with $\norm{u_n}=1$ for all $n\in\N$ such that $\norm{(I+\Dc P(\gl_n))u_n}\to 0$ as $n\to \infty$.
Then 
\eq{
0&\from \re \iprod{(I+\Dc P(\gl_n))u_n}{P(\gl_n)u_n} 
\geq  \norm{\Dc^{1/2} P(\gl_n)u_n}^2, 
}
which implies $\norm{\Dc P(\gl_n)u_n}\to 0$ as $n\to \infty$.
However, since $\norm{u_n}=1$, we 
would then have $\norm{(I+\Dc P(\gl_n))u_n}\not \to 0$ as $n\to \infty$, which is a contradiction.
\end{proof}

  The last lemma concerns output feedback for passive systems.
  Several additional results on this topic can be found in~\cite{GuiLog17b}.

\begin{lemma}
  \label{lem:Afbreg}
    Assume $\PARsys$ is a passive regular linear system and $\gs(A)\subset \C_-$.
  If $\Dc\geq 0$, then the system $(A-B\Dc Q_1\CL,BQ_2,Q_1\CL,Q_1D)$ with $Q_1=(I+D\Dc)\inv$ and $Q_2=(I+\Dc D)\inv$ is regular, passive, and strongly stable in such a way that
  $ \gs(A-B\Dc Q_1\CL)\subset \C_-$. If $A$ generates an exponentially stable semigroup, then the same is true for $A-B\Dc Q_1\CL$.
\end{lemma}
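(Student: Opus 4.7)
The proof splits into (i) admissibility and regularity of the closed-loop system, (ii) passivity, and (iii) spectrum and stability of $A^{cl}:=A-B\Dc Q_1\CL$. Throughout, the main ingredients are Lemmas~\ref{lem:Repostoinv}(d) and~\ref{lem:IDPinvert}, both applied with $T=P(\gl)$, $S=\Dc$.

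Since $(A,B,C,D)$ is passive, $\re D\geq 0$, and combined with $\Dc\geq 0$, Lemma~\ref{lem:Repostoinv}(d) makes $I+D\Dc$ and $I+\Dc D$ boundedly invertible, so $Q_1,Q_2$ are well-defined. Regularity of $(A,B,C,D)$ ensures $\norm{P(\gl)}$ is bounded on some right half-plane $\re\gl\geq \gs_0$; on this half-plane, Lemma~\ref{lem:Repostoinv}(d) and Lemma~\ref{lem:IDPinvert}, together with the identity $(I+P\Dc)\inv=I-P(I+\Dc P)\inv \Dc$, deliver uniformly bounded invertibility of $I+P(\gl)\Dc$. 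Therefore $-\Dc$ is an admissible feedback, and by Weiss's feedback theorem the closed-loop system is regular with the parameters in the statement (obtained by solving $y=Q_1(\CL x+Dv)$ and $u=-\Dc y+v$ for $y$ and $u$ in terms of $x$ and $v$). Substituting these relations into the passivity inequality for the original system and using $\Dc=\Dc^\ast \geq 0$ gives the passivity inequality for the closed-loop data, with the loss term $\iprod{\Dc y}{y}\geq 0$ absorbing the feedback contribution. Hence $(A^{cl},B^{cl},C^{cl},D^{cl})$ is impedance passive and $A^{cl}$ generates a contraction semigroup by Lumer--Phillips.

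For the spectrum, a direct Sherman--Morrison--Woodbury computation (carried out without requiring $\Dc$ to be invertible, so Lemma~\ref{lem:Woodbury} serves only as a template) yields, for each $\gl\in\rho(A)$ at which $I+\Dc P(\gl)$ is boundedly invertible,
\begin{align*}
R(\gl,A^{cl})=R(\gl,A)-R(\gl,A)B(I+\Dc P(\gl))\inv \Dc\CL R(\gl,A).
\end{align*}
For every $\gl\in\overline{\C_+}\cap\rho(A)$ this invertibility is provided by Lemma~\ref{lem:Repostoinv}(d), and since $\gs(A)\subset \C_-$ we conclude $\overline{\C_+}\subset \rho(A^{cl})$. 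The Arendt--Batty--Lyubich--V\~{u} theorem applied to the contraction semigroup generated by $A^{cl}$ then yields strong stability.

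Finally, if $A$ is exponentially stable, then $\norm{R(\cdot,A)}$, $\norm{R(\cdot,A)B}$, $\norm{\CL R(\cdot,A)}$, and $\norm{P(\cdot)}$ are uniformly bounded on $\overline{\C_+}$; Lemma~\ref{lem:IDPinvert} produces a uniform bound on $\norm{(I+\Dc P(i\gw))\inv}$, and the resolvent formula above gives $\sup_{\gw\in\R}\norm{R(i\gw,A^{cl})}<\infty$. The Gearhart--Pr\"{u}ss theorem then delivers exponential closed-loop stability. The main obstacle I expect is writing the Woodbury-type identity in a form that does not require $\Dc$ to be invertible; once the factor $(I+\Dc P(\gl))\inv \Dc$ is correctly isolated, every remaining invertibility is supplied directly by Lemmas~\ref{lem:Repostoinv}(d) and~\ref{lem:IDPinvert}.
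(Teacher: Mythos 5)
Your proposal is correct and follows essentially the same route as the paper: regularity via admissibility of the feedback $-\Dc$ (Lemma~\ref{lem:Repostoinv}(d) plus Weiss's feedback theory), a direct verification of passivity, the Woodbury identity with $(I+\Dc P(\gl))\inv$ isolated so that invertibility of $\Dc$ is never needed, the Arendt--Batty--Lyubich--V\~{u} theorem for strong stability, and Lemma~\ref{lem:IDPinvert} with the uniform resolvent bound on $\overline{\C_+}$ for exponential stability. Your explicit remark about rearranging the Woodbury formula to avoid inverting $\Dc$ is exactly the point the paper leaves implicit.
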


\begin{proof}
  The system $(A-B\Dc Q_1\CL,BQ_2,Q_1\CL,Q_1D)$ is
  obtained from~\eqref{eq:plantintro} with output feedback $u(t)=-\Dc y(t)$. 
  The regularity follows from~\cite{Wei94}, since $-\Dc$ is an admissible output feedback operator by Lemma~\ref{lem:Repostoinv}(d).
  Since $ \Dc\geq 0$, it is straightforward to verify that $(A-B\Dc Q_1\CL,BQ_2,Q_1\CL,Q_1D)$ is passive.
  In particular $A-B\Dc Q_1\CL$ generates a contraction semigroup, and the strong stability of the semigroup follows from the Arendt--Batty--Lyubich--V\~{u} Theorem~\cite{AreBat88,LyuVu88} once we have shown $i\R\subset \gs(A-B\Dc Q_1\CL)$.
Let $\gl\in \overline{\C_+}$. 
  The transfer function $P(\gl)=\CL R(\gl,A)B+D$ satisfies $\re P(\gl)\geq 0$, and thus
  the operator
  \ieq{
    I+D\Dc+\CL R(\gl,A)B \Dc
    =  I+ P(\gl)\Dc
  }
  is boundedly invertible
  by Lemma~\ref{lem:Repostoinv}(d). 
Using Lemma~\ref{lem:Woodbury} we therefore see that  
 $\gl\in \rho(A-B\Dc Q_1\CL)$ and 
  \eq{
  \MoveEqLeft[5] R(\gl,A-B\Dc Q_1\CL) 
  = R(\gl,A) 
  - R(\gl,A)B (I+ \Dc P(\gl))\inv \Dc\CL R(\gl,A).
  }
  Since $\gl\in \overline{\C_+}$ was arbitrary, we have $\gs(A-B\Dc Q_1\CL)\subset \C_-$. If $A$ generates an exponentially stable semigroup, then 
$\sup_{\gl\in \overline{\C_+}}\norm{(I+\Dc P(\gl))\inv}<\infty$ by Lemma~\ref{lem:IDPinvert}, and the regularity and exponential stability of $\PARsys$ imply 
  $\sup_{\gl\in \overline{\C_+}}\norm{R(\gl,A-B\Dc Q_1\CL)}<\infty$. Thus the semigroup generated by $A-B\Dc Q_1\CL$ is exponentially stable.
\end{proof}

 \begin{proof}[Proof of Lemma~\textup{\ref{lem:Woodbury}}]
   Let $\gl\in\rho(A)$ be such that $Q\inv + \CL R(\gl,A)B$ has a bounded inverse. Denote  $R_\gl = R(\gl,A)$ and $R(\gl)=R_\gl - R_\gl B(Q\inv + \CL R_\gl B)\inv \CL R_\gl$.
   If $x\in X$, then $R(\gl)x\in X_B$ and a computation on $X_{-1}$ shows
   \eq{
     \MoveEqLeft (\gl-A+BQ\CL)R(\gl)x\\
     &= x +  B\left[Q  - (I + Q\CL R_\gl B)(Q\inv + \CL R_\gl B)\inv \right]\CL R_\gl x = x\in X.
   }
   Thus $R(\gl)x\in \Dom(A-BQ\CL)$ and $(\gl-A+BQ\CL)R(\gl)=I$.
   On the other hand, if $x\in \Dom(A-BQ\CL)$, then $x\in X_B$ and
we can again compute on $X_{-1}$ (considering $R(\gl)$ as an operator $R(\gl):X+\ran(B)\to X$)
   \eq{
     \MoveEqLeft R(\gl)(\gl-A+BQ\CL)x\\
     &= x +  R_\gl B\left[Q  - (Q\inv + \CL R_\gl B)\inv (I + \CL R_\gl BQ)\right]\CL  x = x.
   }
   Since $x\in \Dom(A-BQ\CL)$ was arbitrary, this completes the proof.
 \end{proof}


\begin{thebibliography}{10}

\bibitem{FatLea12}
F. Alabau-Boussouira and M. L{\'e}autaud.
\newblock Indirect stabilization of locally coupled wave-type systems.
\newblock {\em ESAIM Control Optim. Calc. Var.}, 18(2):548--582, 2012.

\bibitem{AmmDim16}
K. Ammari, M. Dimassi, and M. Zerzeri.
\newblock Rate of decay of some abstract {P}etrowsky-like dissipative semi-groups.
\newblock {\em Semigroup Forum}, 93(1):1--16, 2016.

\bibitem{AmmNic09}
K. Ammari and S. Nicaise.
\newblock Polynomial and analytic stabilization of a wave equation coupled with an {E}uler--{B}ernoulli beam.
\newblock {\em Math. Methods Appl. Sci.}, 32(5):556--576, 2009.

\bibitem{AreBat88}
W.~Arendt and C.~J.~K. Batty.
\newblock Tauberian theorems and stability of one-parameter semigroups.
\newblock {\em Trans. Amer. Math. Soc.}, 306:837--841, 1988.

\bibitem{AreBat11book}
W. Arendt, C.~J.~K. Batty, M. Hieber, and F. Neubrander.
\newblock {\em Vector-Valued Laplace Transforms and Cauchy Problems}.
\newblock Birkh{\"a}user, Basel, second edition, 2011.

\bibitem{AvaLas16}
G. Avalos, I. Lasiecka, and R. Triggiani.
\newblock Heat-wave interaction in 2--3 dimensions: optimal rational decay rate.
\newblock {\em J. Math. Anal. Appl.}, 437(2):782--815, 2016.

\bibitem{BatDuy08}
C.~J.~K. Batty and T. Duyckaerts.
\newblock Non-uniform stability for bounded semi-groups on {B}anach spaces.
\newblock {\em J. Evol. Equ.}, 8:765--780, 2008.

\bibitem{BorTom10}
A. Borichev and Y. Tomilov.
\newblock Optimal polynomial decay of functions and operator semigroups.
\newblock {\em Math. Ann.}, 347(2):455--478, 2010.

\bibitem{BouHad09}
S.~Boulite, S.~Hadd, H.~Nounou, and M.~Nounou.
\newblock The {PI}-controller for infinite dimensional linear systems in {B}anach state spaces.
\newblock In {\em Proceedings of the 2009 American Control Conference}, St.  Louis, Missouri, June 10--12 2009.

\bibitem{BroLoz07book}
B. Brogliato, R. Lozano, B. Maschke, and O. Egeland.
\newblock {\em Dissipative systems analysis and control}.
\newblock Communications and Control Engineering Series. Springer-Verlag London, second edition, 2007.

\bibitem{ByrGil02}
C.~I. Byrnes, D.~S. Gilliam, V.~I. Shubov, and G. Weiss.
\newblock Regular linear systems governed by a boundary controlled heat equation.
\newblock {\em J. Dyn. Control Syst.}, 8(3):341--370, 2002.

\bibitem{ChiPau19arxiv}
R. {Chill}, L. {Paunonen}, D. {Seifert}, R. {Stahn}, and Y.  {Tomilov}.
\newblock Non-uniform stability of damped unitary groups.
\newblock {\em arXiv e-prints}, arXiv:1911.04804, November 2019. 
Available at \url{https://arxiv.org/abs/1911.04804}.



\bibitem{CurMor09}
R. Curtain and K. Morris.
\newblock Transfer functions of distributed parameter systems: a tutorial.
\newblock {\em Automatica J. IFAC}, 45(5):1101--1116, 2009.

\bibitem{CurWei06}
R.~F. Curtain and G. Weiss.
\newblock Exponential stabilization of well-posed systems by colocated feedback.
\newblock {\em SIAM J. Control Optim.}, 45(1):273--297 (electronic), 2006.

\bibitem{Dav76}
E.~J. Davison.
\newblock The robust control of a servomechanism problem for linear
  time-invariant multivariable systems.
\newblock {\em IEEE Trans. Automat. Control}, 21(1):25--34, 1976.

\bibitem{FenGuo15}
Hongyinping Feng and Bao-Zhu Guo.
\newblock On stability equivalence between dynamic output feedback and static output feedback for a class of second order infinite-dimensional systems.
\newblock {\em SIAM J. Control Optim.}, 53(4):1934--1955, 2015.

\bibitem{FraWon75a}
B.~A. Francis and W.~M. Wonham.
\newblock The internal model principle for linear multivariable regulators.
\newblock {\em Appl. Math. Optim.}, 2(2):170--194, 1975.

\bibitem{GuiLog17b}
C.~Guiver, H.~Logemann, and M.~R. Opmeer.
\newblock Transfer functions of infinite-dimensional systems: positive realness and stabilization.
\newblock {\em Math. Control Signals Systems}, 29(4):Art. 2, 61, 2017.

\bibitem{HamPoh00}
T. H{\"a}m{\"a}l{\"a}inen and S. Pohjolainen.
\newblock A finite-dimensional robust controller for systems in the {CD}-algebra.
\newblock {\em IEEE Trans. Automat. Control}, 45(3):421--431, 2000.

\bibitem{HamPoh10}
T. H{\"a}m{\"a}l{\"a}inen and S. Pohjolainen.
\newblock Robust regulation of distributed parameter systems with infinite-dimensional exosystems.
\newblock {\em SIAM J. Control Optim.}, 48(8):4846--4873, 2010.

\bibitem{HarYam88}
S. Hara, Y. Yamamoto, T. Omata, and M. Nakano.
\newblock Repetitive control system: {A} new type servo system for periodic exogeneous signals.
\newblock {\em IEEE Trans. Automat. Control}, 33(7):659--668, 1988.

\bibitem{BenAmm16}
E.~M. Ait~Ben Hassi, K.~Ammari, S.~Boulite, and L.~Maniar.
\newblock Stability of abstract thermo-elastic semigroups.
\newblock {\em J. Math. Anal. Appl.}, 435(2):1021--1035, 2016.

\bibitem{Imm07a}
E. Immonen.
\newblock On the internal model structure for infinite-dimensional systems: {T}wo common controller types and repetitive control.
\newblock {\em SIAM J. Control Optim.}, 45(6):2065--2093, 2007.

\bibitem{ImmPoh05b}
E. Immonen and S. Pohjolainen.
\newblock Output regulation of periodic signals for {DPS}: {A}n infinite-dimensional signal generator.
\newblock {\em IEEE Trans. Automat. Control}, 50(11):1799--1804, 2005.

\bibitem{LiuRao05}
Zhuangyi Liu and Bopeng Rao.
\newblock Characterization of polynomial decay rate for the solution of linear evolution equation.
\newblock {\em Z. Angew. Math. Phys.}, 56(4):630--644, 2005.

\bibitem{LogTow97}
H. Logemann and S. Townley.
\newblock Low-gain control of uncertain regular linear systems.
\newblock {\em SIAM J. Control Optim.}, 35(1):78--116, 1997.

\bibitem{LyuVu88}
Y.~I. Lyubich and V\~{u}~Qu\^{o}c Ph\'{o}ng.
\newblock Asymptotic stability of linear differential equations in {B}anach spaces.
\newblock {\em Studia Math.}, 88:37--42, 1988.

\bibitem{Mil12}
L. Miller.
\newblock Resolvent conditions for the control of unitary groups and their approximations.
\newblock {\em J. Spectr. Theory}, 2(1):1--55, 2012.

\bibitem{Pau16a}
L. Paunonen.
\newblock Controller design for robust output regulation of regular linear systems.
\newblock {\em IEEE Trans. Automat. Control}, 61(10):2974--2986, 2016.

\bibitem{Pau17b}
L. Paunonen.
\newblock Robust controllers for regular linear systems with infinite-dimensional exosystems.
\newblock {\em SIAM J. Control Optim.}, 55(3):1567--1597, 2017.

\bibitem{PauPoh10}
L. Paunonen and S. Pohjolainen.
\newblock Internal model theory for distributed parameter systems.
\newblock {\em SIAM J. Control Optim.}, 48(7):4753--4775, 2010.

\bibitem{PauPoh14a}
L. Paunonen and S. Pohjolainen.
\newblock The internal model principle for systems with unbounded control and observation.
\newblock {\em SIAM J. Control Optim.}, 52(6):3967--4000, 2014.

\bibitem{RamLeG14}
H. Ram{\'{\i}}rez, Y. Le~Gorrec, A. Macchelli, and H. Zwart.
\newblock Exponential stabilization of boundary controlled port-{H}amiltonian systems with dynamic feedback.
\newblock {\em IEEE Trans. Automat. Control}, 59(10):2849--2855, 2014.

\bibitem{Reb93}
R.~Rebarber.
\newblock Conditions for the equivalence of internal and external stability for distributed parameter systems.
\newblock {\em IEEE Trans. Automat. Control}, 38(6):994--998, Jun 1993.

\bibitem{RebWei03}
R. Rebarber and G. Weiss.
\newblock Internal model based tracking and disturbance rejection for stable well-posed systems.
\newblock {\em Automatica J. IFAC}, 39(9):1555--1569, 2003.

\bibitem{RozSei19}
J. Rozendaal, D. Seifert, and R. Stahn.
\newblock Optimal rates of decay for operator semigroups on {H}ilbert spaces.
\newblock {\em Adv. Math.}, 346:359--388, 2019.

\bibitem{Rus69}
D.~L Russell.
\newblock Linear stabilization of the linear oscillator in {H}ilbert space.
\newblock {\em J. Math. Anal. Appl.}, 25(3):663--675, 1969.

\bibitem{Sta02}
O.~J. Staffans.
\newblock Passive and conservative continuous-time impedance and scattering
  systems. {Part I}: {W}ell-posed systems.
\newblock {\em Math. Control Signals Systems}, 15(4):291--315, 2002.

\bibitem{TucWei09book}
M.~Tucsnak and G.~Weiss.
\newblock {\em Observation and Control for Operator Semigroups}.
\newblock Birkh\"auser Basel, 2009.

\bibitem{TucWei14}
M. Tucsnak and G. Weiss.
\newblock Well-posed systems---The {LTI} case and beyond.
\newblock {\em Automatica J. IFAC}, 50(7):1757--1779, 2014.

\bibitem{VilZwa09}
J. Villegas, H. Zwart, Y. Le~Gorrec, and B. Maschke.
\newblock Exponential stability of a class of boundary control systems.
\newblock {\em IEEE Trans. Automat. Control}, 54(1):142--147, 2009.

\bibitem{WanJi14b}
Xinghu Wang, Haibo Ji, and Jie Sheng.
\newblock Output regulation problem for a class of {SISO} infinite dimensional systems via a finite dimensional dynamic control.
\newblock {\em J. Syst. Sci. Complex.}, 27(6):1172--1191, 2014.

\bibitem{WeiCur97}
G.~Weiss and R.~F. Curtain.
\newblock Dynamic stabilization of regular linear systems.
\newblock {\em IEEE Trans. Automat. Control}, 42(1):4--21, 1997.

\bibitem{WeiXu05}
G.~Weiss and Cheng-Zhong Xu.
\newblock Spectral properties of infinite-dimensional closed-loop systems.
\newblock {\em Math. Control Signals Systems}, 17(3):153--172, 2005.

\bibitem{Wei94}
G. Weiss.
\newblock Regular linear systems with feedback.
\newblock {\em Math. Control Signals Systems}, 7(1):23--57, 1994.

\bibitem{WeiHaf99}
G. Weiss and M. H{{\"a}}fele.
\newblock Repetitive control of {MIMO} systems using {$H^\infty$} design.
\newblock {\em Automatica J. IFAC}, 35(7):1185--1199, 1999.

\bibitem{XuSal14}
Cheng-Zhong Xu and G. Sallet.
\newblock Multivariable boundary {PI} control and regulation of a fluid flow system.
\newblock {\em Math. Control Relat. Fields}, 4(4):501--520, 2014.

\bibitem{YamHar88}
Y. Yamamoto and S. Hara.
\newblock Relationships between internal and external stability for infinite-dimensional systems with applications to a servo problem.
\newblock {\em IEEE Trans. Automat. Control}, 33(11):1044--1052, 1988.

\bibitem{ZhaZua04}
Xu~Zhang and E. Zuazua.
\newblock Polynomial decay and control of a 1--d hyperbolic--parabolic coupled
  system.
\newblock {\em J. Differential Equations}, 204(2):380--438, 2004.

\bibitem{ZhaWei17}
Xiaowei Zhao and G. Weiss.
\newblock Stability properties of coupled impedance passive {LTI} systems.
\newblock {\em IEEE Trans. Automat. Control}, 62(11):5769--5779,
  2017.

\bibitem{ZwaLeg10}
H. Zwart, Y. Le~Gorrec, B. Maschke, and J. Villegas.
\newblock Well-posedness and regularity of hyperbolic boundary control systems on a one-dimensional spatial domain.
\newblock {\em ESAIM Control Optim. Calc. Var.}, 16(4):1077--1093, 2010.

\end{thebibliography}
\end{document}